\newtheorem{theorem}{Theorem}[section]
\newtheorem{lemma}[theorem]{Lemma}
\newtheorem{proposition}[theorem]{Proposition}
\newtheorem{remark}[theorem]{Remark}
\newtheorem{example}[theorem]{Example}
\newtheorem{corollary}[theorem]{Corollary}
\def\B{\ensuremath{\mathcal{B}}}
\def\A{\ensuremath{\mathcal{A}}}
\def\C{\ensuremath{\mathcal{C}}}
\def\v{\ensuremath{\mathrm{v}}}
\newcommand{\x}{\mathbf{x}}
\newcommand{\aso}{\boldsymbol{a}}
\newcommand{\br}{\boldsymbol{r}}
\newcommand{\bl}{\boldsymbol{l}}
\newcommand{\brep}{\ensuremath{\mathbb{L}\mathrm{ ax}}}
\newcommand{\brepg}{\ensuremath{\mathbb{G}\mathrm{raph}}}
\def\ner{\ensuremath{\underline{\mathrm{N}}}}
\def\Set{\ensuremath{\mathbf{Set}}}
\def\Cat{\ensuremath{\mathbf{Cat}}}
\newcommand{\BB}{\ensuremath{\mathrm{B}}}
\newcommand{\Ob}{\mathrm{Ob}}
\newcommand{\N}{\mathcal{N}}
\newcommand{\M}{\mathcal{M}}
\newcommand{\D}{\mathcal{D}}
\newcommand{\PP}{\mathcal{P}}
\newcommand{\QQ}{\mathcal{Q}}
\newcommand{\HH}{\mathcal{H}}
\newcommand{\GG}{\mathcal{G}}
\title{\em Bicategorical homotopy pullbacks}
\author{A.M. Cegarra, B.A. Heredia, J.Remedios}
\address{
\newline
Departamento de \'{A}lgebra, Universidad de
Granada, 18071 Granada, Spain, 
\newline acegarra@ugr.es
\newline \newline
Departamento de \'{A}lgebra, Universidad de
Granada,
18071 Granada, Spain, 
\newline  baheredia@ugr.es\newline \newline
Departamento de Matem\'{a}tica Fundamental, Universidad de La
Laguna, 38271 La Laguna, Spain, 
\newline jremed@ull.es } 
\thanks{This work has been supported by DGI of Spain, Project
  MTM2011-22554. Also, the second author by FPU grant FPU12-01112. The
  third author acknowledges and thanks the support and hospitality of
  the Algebra Department at the University of Granada}
\subjclass[2000]{18D05, 18D10, 55P15, 55R65.}
\begin{document}
\maketitle
\begin{abstract} The homotopy theory of higher categorical structures has become a relevant
part of the machinery of algebraic topology and algebraic K-theory,
and this paper contains contributions to the study of the
relationship between B\'enabou's bicategories and the homotopy types
of their classifying spaces. Mainly, we state and prove an extension
of Quillen's Theorem B by showing, under reasonable necessary
conditions, a bicategory-theoretical interpretation of the
homotopy-fibre product of the continuous maps induced on classifying
spaces by a diagram of bicategories $\A\to\B\leftarrow \A'$.
Applications are given for the study of homotopy pullbacks of
monoidal categories and of crossed modules.

\end{abstract}
\section{Introduction and summary} If $A\overset{\phi}\to B\overset{\ \phi'}\leftarrow A'$
are continuous maps
between topological spaces, its
 {\em homotopy-fibre product} $A\times^{_\mathrm{h}}_BA'$ is the subspace of
the product $A\times B^I\!\times A'$, where $I=[0,1]$ and $B^{I}$ is
taken with the compact-open topology, whose points are triples
$(a,\gamma,a')$ with
 $a\in A$, $a'\in A'$, and $\gamma: \phi a\to \phi'a'$ is a path in $B$ joining  $\phi a$  and $\phi' a'$, that is
$\gamma:I\to B$ is a path starting at  $\gamma 0=\phi a$  and ending
at  $\gamma 1=\phi'a'$. In particular, the {\em homotopy-fibre} of a
continuous map $\phi:A\to B$ over a base point $b\in B$ is
$\mathrm{Fib}(\phi,b)=A\times^{_\mathrm{h}}_B\{b\}$, the
homotopy-fibre product of $\phi$ and the constant inclusion map
$\{b\}\hookrightarrow B$. That is, $\mathrm{Fib}(\phi,b)$ is the
space of pairs $(a,\gamma)$, where $a\in A$, and $\gamma:\phi a \to
b$ is a path in $B$ joining $\phi a$  with the base point $b$.

If $\A\overset{F}\to \B\overset{\ \,F'}\leftarrow \A'$ are now
functors between (small) categories, its
 {\em homotopy-fibre product category} is the comma category $F\!\downarrow \!F'$ consisting of triples
$(a,f,a')$ with $f:Fa\to F'a'$  a morphism in $\B$, in which a
morphism from $(a_0,f_0,a'_0)$ to $(a_1,f_1,a'_1)$ is a pair of
morphisms $u:a_0\to a_1$ in $\A$ and $u':a'_0\to a'_1$ in $A'$ such
that
 $F'u'\circ f_0=f_1\circ Fu$.
In particular, the {\em homotopy-fibre category} $F\!\downarrow \!b$
of a functor $F:\A\to \B$, relative to an object $b\in \Ob\B$, is
the homotopy-fibre product category of $F$ and the constant functor
$\{b\}\hookrightarrow \B$. These naive categorical emulations of the
topological constructions are, however, subtle. Let $\BB:\Cat \to
\mathbf{Top}$ be the classifying space functor. The homotopy-fibre
product category  $F\!\downarrow \!F'$ comes with a canonical map
from its classifying space to the homotopy-fibre product space of
the induced maps $\BB F:\BB\A\to \BB\B$ and $\BB F':\BB\A'\to
\BB\B$,
 and Barwick and Kan ~\cite{BK2011, BK2013} have proven that {\em this canonical map
$\BB(F\!\downarrow \!F') \to \BB\A\times^{_\mathrm h}_{\BB
\B}\BB\A'$ is a homotopy equivalence whenever the maps $\BB
(F\!\downarrow \!b_0)\to \BB (F\!\downarrow \!b_1)$, induced by the
different morphisms $b_0\to b_1$ of $\B$, are homotopy
equivalences}. This result extends the well-known Quillen's Theorem
B, which asserts that {\em under such an hypothesis, the  canonical
maps $\BB(F\!\downarrow \!b)\to \mathrm{Fib}(\BB F,\BB b)$ are
homotopy equivalences}. Let us stress that Theorem B and its
consequent Theorem A have been fundamental for higher algebraic
K-theory since the early 1970s, when Quillen ~\cite{Quillen1973}
published his seminal paper, and they are now two of the most
important theorems in the foundation of homotopy theory.

Similar categorical lax limit constructions have been used to
describe homotopy pullbacks in many settings of enriched categories,
where a homotopy theory has been established (see Grandis
~\cite{Gran94}, for instance). Here, we focus on bicategories. Like
categories, small B\'{e}nabou bicategories ~\cite{Benabou1967} and,
in particular, 2-categories and Mac Lane's monoidal categories, are
closely related to topological spaces through the classifying space
 construction, as shown by
Carrasco, Cegarra, and Garz\'on in ~\cite{CCG2010}. This assigns to
each  bicategory $\B$ a CW-complex $\BB \B$, whose cells give a
natural geometric meaning to the cells of the bicategory. By this
assignment, for example, bigroupoids correspond to homotopy 2-types,
that is, to CW-complexes whose $n^{\mathrm{th}}$ homotopy groups at
any base point vanish for $n\geq 3$ (see Duskin ~\cite[Theorem
8.6]{Duskin2002}), and homotopy regular monoidal categories  to
delooping spaces of the classifying spaces of the underlying
categories (see Jardine ~\cite[Propositions 3.5 and
3.8]{Jardine1991}).

In the preparatory Section \ref{Preliminaires} of this paper, for
any diagram $\xymatrix@C=18pt{\A\ar[r]^-{F} & \B
&\A'\ar[l]_(.4){F'}}$,
 where $\A$, $\B$, and $\A'$ are bicategories, $F$ is a lax
functor, and $F'$ is an oplax functor (for instance, if $F$ and $F'$
are both homomorphisms),  we present  a {\em homotopy-fibre product
bicategory} $F\!\downarrow\!F'$, whose 0-cells, or objects, are
triples $(a,f,a')$ with $f:Fa\to F'a'$ a $1$-cell in $\B$ as in the
case when $F$ and $F'$ are functors between categories. But now, a
$1$-cell from $(a_0,f_0,a'_0)$ to $(a_1,f_1,a'_1)$ is a triple
$(u,\beta,u')$
 consisting of $1$-cells $u:a_0\to a_1$ in $\A$ and
 $u':a'_0\to a'_1$ in $\A'$, together with a $2$-cell $\beta: F'u'\circ f_0\Rightarrow f_1\circ Fu$ in $\B$.
And $F\!\downarrow\!F'$ has  $2$-cells
$(\alpha,\alpha'):(u,\beta,u')\Rightarrow (v,\gamma,v')$, which are
given by $2$-cells $\alpha:u\Rightarrow v$ in $\A$ and
 $\alpha':u'\Rightarrow v'$ in $\A'$ such that  $(1_{f_1}\circ F\alpha)\cdot \beta =(\gamma\circ F'\alpha')\circ
1_{f_0}$. In particular, for any object $b\in \B$, we have the {\em
homotopy-fibre bicategories} $F\!\downarrow\!b$ and
$b\!\downarrow\!F'$, in
 terms of which we state and prove our main results  of  the paper. These are exposed in Section \ref{mainSection}, and
they can be summarized as follows (see Theorem \ref{mainTheorem} and
Corollary \ref{hfth}):

$\bullet$ {\em For any diagram of bicategories
$\xymatrix@C=14pt{\A\ar[r]^-{F} & \B &\A'\ar[l]_(.4){F'}}$, where
$F$ is a lax functor and $F'$ is an oplax functor, there is a
canonical map $\BB(F\!\downarrow \!F') \to \BB\A\times^{_{\mathrm
h}}_{\BB \B}\BB\A'$, from the classifying space of the
homotopy-fibre product bicategory to the homotopy-fibre product
space of the induced maps $\BB F:\BB\A\to \BB\B$ and $\BB
F':\BB\A'\to \BB\B$.}

 \vspace{0.15cm}$\bullet$ {\em For a given lax functor $F:\A\to \B$,
the following properties are equivalent:

- For any oplax functor $F':\A'\to \B$, the map $\BB(F\!\downarrow
\!F') \to \BB\A\times^{_{\mathrm h}}_{\BB \B}\BB\A'$ is a homotopy
equivalence.

-  For any 1-cell $b_0\to b_1$ of $\B$, the map $\BB (F\!\downarrow
\!b_0)\to \BB (F\!\downarrow \!b_1)$ is a homotopy equivalence.

- For any 0-cell $b$ of $\B$, the  map $\BB(F\!\downarrow \!b)\to
\mathrm{Fib}(\BB F,\BB b)$ is a homotopy equivalence. }

\vspace{0.15cm} $\bullet$ {\em For a given oplax functor $F':\A'\to
\B$, the following properties are equivalent:

-For any lax functor $F:\A\to \B$, the map $\BB(F\!\downarrow \!F')
\to \BB\A\times^{_{\mathrm h}}_{\BB \B}\BB\A'$ is a  homotopy
equivalence.

-  For any 1-cell $b_0\to b_1$ of $\B$, the map $\BB
(b_1\!\downarrow \!F')\to \BB (b_0\!\downarrow \!F)$ is a homotopy
equivalence.

- For any 0-cell $b$ of $\B$, the  map $\BB(b\!\downarrow \!F')\to
\mathrm{Fib}(\BB F',\BB b)$ is a homotopy equivalence. }

\vspace{0.15cm} Let us remark that,  if the map $\BB(F\!\downarrow
\!F') \to \BB\A\times^{\mathrm h}_{\BB \B}\BB\A'$ is a homotopy
equivalence, then, by Dyer and Roitberg ~\cite{DR1980}, there are
Mayer-Vietoris type long exact sequences on homotopy groups $$
\xymatrix@C=12pt{ \cdots\to \pi_{n+1}\BB \B\ar[r]
&\pi_n\BB(F\!\downarrow\!F')\ar[r]& \pi_n\BB\A\times\pi_n\BB\A'
\ar[r]&\pi_n\BB \B\to \cdots} .$$

The above results include the aforementioned results by Barwick and
Kan, but also the extension of Quillen's Theorems A and B to lax
functors between bicategories stated by Calvo, Cegarra, and Heredia
in ~\cite[Theorem 5.4]{CCH2013}, as  well as the generalized Theorem
A for lax functors from categories  into  2-categories by del Hoyo
in ~\cite[Theorem 6.4]{Hoyo2012} (see Corollaries \ref{hfth} and
\ref{tagen}). Related to this, an interesting relative Theorem A for
lax functors between 2-categories has recently been proven by Chiche
in ~\cite{Chiche2012}.

We also study conditions on a bicategory $\B$ in order to ensure that the space
$\BB(F\!\downarrow \!F')$ is always homotopy equivalent to the
homotopy-fibre product of the induced maps $\BB F:\BB \A\to \BB\B$
and $\BB F':\BB \A'\to \BB\B$. Thus, in Theorem \ref{xbs}, we prove

\vspace{0.15cm} $\bullet$ {\em For a bicategory $\B$, the following
properties are equivalent:

- For any diagram $\xymatrix@C=14pt{\A\ar[r]^-{F} & \B
&\A'\ar[l]_(.4){F'}}$, where $F$ is a lax functor and $F'$ is an
oplax functor, the map $\BB(F\!\downarrow \!F') \to
\BB\A\times^{_{\mathrm h}}_{\BB \B}\BB\A'$ is a homotopy equivalence

-  For any object $b$ and $1$-cell $b_0\to b_1$ in $\B$, the induced
map $\BB\B(b,b_0) \to \BB\B(b, b_1)$ is a homotopy equivalence.

-  For any object $b$ and $1$-cell $b_0\to b_1$ in $\B$, the induced
map $\BB\B(b_1,b) \to \BB\B(b_0, b)$ is a homotopy equivalence.

- For any two objects $b,b'\in\B$, the canonical map $$
\BB\B(b,b')\to \{\gamma:I\to \BB\B \mid \gamma(0)=\BB
b,\gamma(1)=\BB b'\}\subseteq \BB\B^{I}
$$
is a homotopy equivalence.
}

\vspace{0.15cm}For a bicategory $\B$ satisfying the conditions
above, we conclude the existence of a canonical homotopy equivalence
$$\BB\B(b,b) \simeq\Omega(\BB \B,\BB b)$$
between the loop space of the classifying space of the bicategory
with base point $\BB b$ and the classifying space of the category of
endomorphisms of $b$ in $\B$ (see Corollary \ref{corome}). This
result for $\B$ a 2-category should be attributed to Tillmann
~\cite[Lemma 3.3]{Tillmann1997}, but it has been independently
proven by both the first author ~\cite[Example 4.4]{Cegarra2011}
and by Del Hoyo ~\cite[Theorem 8.5]{Hoyo2012}.

Since any monoidal category can be regarded as a bicategory with
only one 0-cell, our results are applicable to them. Thus, any
diagram of monoidal functors and monoidal categories,
$(\N,\otimes)\overset{F}\to (\M,\otimes)\overset{\
F'}\leftarrow(\N',\otimes)$, gives rise to a {\em homotopy-fibre
product bicategory} $F\!\overset{_{\otimes}}\downarrow\!F'$, whose $0$-cells are the
objects $m\in \M$, whose 1-cells $(n,f,n'):m_0\to m_1$  consist of
objects $n\in \N$ and $n'\in \N'$, and a morphism $f: F'n'\otimes
m_0 \to m_1\otimes Fn$ in $\M$, and whose 2-cells
$(u,u'):(n,f,n')\Rightarrow (\bar{n},\bar{f},\bar{n}')$ are given by
a pair of morphisms, $u:n\to \bar{n}$ in $\N$ and $u':n'\to
\bar{n}'$ in $\N'$, such that $(1\otimes Fu)\cdot f=\bar{f}\cdot
(F'u'\otimes 1)$. In particular, for any monoidal functor $F$ as
above, we have the {\em homotopy-fibre bicategory} $F\!\overset{_{\otimes}}\downarrow
\!\text{I}$, where $\text{I}:([0],\otimes) \to (\M,\otimes)$ denotes
the monoidal functor from the trivial (one-arrow) monoidal category
$[0]$ to $\M$ that carries its unique object $0$ to the unit object
$\text{I}$ of the monoidal category $\M$. Then, our main conclusions
concerning monoidal categories, which are presented throughout
Section \ref{monoidalCategories}, are summarized as follows (see
Theorems \ref{maintheomon}, \ref{qmab}, and \ref{xbsmon}).

\vspace{0.15cm} $\bullet$ {\em The following properties on a
monoidal functor $F:(\N,\otimes)\to (\M,\otimes)$  are equivalent:

- For any monoidal functor  $F':(\N',\otimes)\to (\M,\otimes)$, the
canonical map $$\BB(F\!\overset{_{\otimes}}\downarrow \!F') \to
\BB(\N,\otimes)\times^{_{\mathrm h}}_
{\BB(\M,\otimes)}\BB(\N',\otimes)$$ is a  homotopy equivalence.

-  For any object $m\in \M $, the homomorphism
  $m\otimes -: F\!\overset{_{\otimes}}\downarrow\!\mathrm{I} \to
  F\!\overset{_{\otimes}}\downarrow\!\mathrm{I}$ induces a homotopy
  autoequivalence on $\BB(F\!\overset{_{\otimes}}\downarrow\!\mathrm{I})$.

- The canonical map $\BB(F\!\overset{_{\otimes}}\downarrow \!\mathrm{I})\to
\mathrm{Fib}(\BB F,\BB \mathrm{I})$ is a homotopy equivalence. }

 \vspace{0.15cm}$\bullet$ {\em The following properties on a monoidal category
$(\M,\otimes)$  are equivalent:

- For any diagram of  monoidal
functors$\xymatrix@C=14pt{(\N,\otimes)\ar[r]^-{F} & (\M,\otimes)
&(\N',\otimes)\ar[l]_(.4){F'}}$, the canonical
 map $\BB(F\!\overset{_{\otimes}}\downarrow \!F') \to
 \BB(\N,\otimes)\times^{_{\mathrm h}}_
{\BB(\M,\otimes)}\BB(\N',\otimes)$ is a  homotopy equivalence.

- For any object $m\in \M$, the functor $m\otimes-:\M \to
  \M$ induces a homotopy autoequivalence on the classifying
  space $\BB\M $.

- For any object $m\in \M$, the functor $-\otimes m:\M \to
  \M$ induces a homotopy autoequivalence on the classifying
  space $\BB\M$.

- The canonical map from the classifying space of the underlying
category into the loop space of the classifying space of the
monoidal category is a homotopy equivalence, $ \BB\M\simeq
\Omega\BB(\M,\otimes) $. }

\vspace{0.15cm}The equivalence between the two last statements in
the first result above might be considered as a version of Quillen's
Theorem B for monoidal functors. A monoidal version of Theorem A
follows: {\em If the homotopy-fibre bicategory of a monoidal functor
$F:(\N,\otimes)\to (\M,\otimes)$ is contractible, that is,
 $\BB(F\!\overset{_{\otimes}}\downarrow\!\mathrm{I})\simeq \mathrm{pt}$, then the
induced map $\BB F:\BB(\N,\otimes)\to \BB(\M,\otimes)$ is a homotopy
equivalence}. The equivalence of the three last statements in the
second one are essentially due to Stasheff ~\cite{Stasheff1963}.

Thanks to the  equivalence between the category of crossed modules
and the category of 2-groupoids, by Brown and Higgins ~\cite[Theorem
4.1]{BH1981equivalence}, our results on bicategories also find
application in the setting of crossed modules, what we do in Section
\ref{crossedModules}. Briefly, for any diagram of crossed modules
$(\GG,\PP,\partial)\overset{(\varphi,F)} \longrightarrow
(\HH,\QQ,\partial) \overset{(\varphi',F')}\longleftarrow
(\GG',\PP',\partial)$, we construct its {\em homotopy-fibre product
crossed module} $(\varphi,F)\!\downarrow\!(\varphi',F')$, and we
prove as the main result here (see Theorem \ref{mthpcm}) the
following:

 $\bullet$ {\em There is a canonical homotopy equivalence
$$\BB\big((\varphi,F)\!\downarrow\!(\varphi',F')\big) \simeq \BB(\GG,\PP,\partial)\times^{_{\mathrm h}}_
{\BB(\HH,\QQ,\partial)}\BB(\GG',\PP',\partial)$$ between the classifying
space of the homotopy-fibre product crossed module and the
homotopy-fibre product space of the induced maps $\BB
(\varphi,F):\BB(\GG,\PP,\partial)\to \BB(\HH,\QQ,\partial)$ and $\BB
(\varphi',F'):\BB(\GG',\PP',\partial)\to \BB(\HH,\QQ,\partial)$. }

 \noindent (Here, $(\GG,\PP,\partial)\mapsto
\BB(\GG,\PP,\partial)$ denotes the classifying space of crossed
modules functor by Brown and Higgins ~\cite{BH1991}.) Recalling that
the category of crossed complexes has a closed model structure, as
shown by Brown and Golasinki in ~\cite{BG1989}, we also prove that
the constructed homotopy-fibre product crossed module
$(\varphi,F)\!\downarrow\!(\varphi',F')$ occurs in a homotopy
pullback in this model category. More precisely, in Theorem
\ref{weakEquivalenceofHPB},  we prove that

 $\bullet$ {\em If one of the morphisms $(\varphi,F)$ or
$(\varphi',F')$ is a fibration, then
  the canonical morphism $$
(\GG,\PP,\partial)\times_{(\HH,\QQ,\partial)}(\GG',\PP',\partial)\to
(\varphi,F)\!\downarrow\!(\varphi',F'),$$ from the pullback crossed
module to the homotopy-fibre product crossed module  induces a
homotopy
  equivalence on classifying spaces. }

 The paper also includes some new results concerning classifying
spaces of bicategories, which are needed here to obtain the
aforementioned results on homotopy-fibre products.
 On the one hand, although in ~\cite[\S 4]{CCG2010} it was
proven that the classifying space construction is a functor from the
category of bicategories and homomorphisms to the category
$\mathbf{Top}$ of spaces, in this paper we need to extend that fact
as given below (see Lemma \ref{fact1}).

 $\bullet$ {\em  The assignment $\B\mapsto \BB\B$ is the
function on objects of two functors
$$\mathbf{Lax}\overset{\BB}\longrightarrow\mathbf{Top}\overset{\BB}\longleftarrow\mathbf{opLax},$$
where $\mathbf{Lax}$  is the category of bicategories  and lax
functors,  and $\mathbf{opLax}$  the category of bicategories
and oplax  functors.}

\noindent On the other hand, we also need to work with Duskin and
Street's {\em geometric nerves} of bicategories ~\cite{Duskin2002,
Street1996}. That is, with the simplicial sets
$\Delta^{\hspace{-2pt}\mathrm{u}}\B$, $\Delta\B$,
$\nabla_{\hspace{-2pt}{\mathrm{u}}}\B$, and $ \nabla\B$, whose
respective $p$-simplices are the normal lax, lax, normal oplax, and
oplax functors from the category $[p]=\{0<\cdots<p\}$ into the
bicategory $\B$. Although in  ~\cite[Theorem 6.1]{CCG2010}  the
existence of homotopy equivalences
$$
  |\Delta^{\hspace{-2pt}\mathrm{u}}\B|\,\simeq \,|\Delta\B|\,\simeq\,
\BB\B\,\simeq \,|\nabla \B|\,\simeq\,
|\nabla_{\hspace{-2pt}{\mathrm{u}}}\B|
$$
was proved, their natural behaviour is not studied. Then, in Lemma
\ref{facts2} we state the following:

 $\bullet$ {\em For any bicategory $\B$, the homotopy
equivalence $|\Delta^{\hspace{-2pt}\mathrm{u}}\B|\,\simeq
\,|\Delta\B|$ is natural on normal lax functors, the homotopy
equivalence $|\Delta\B|\,\simeq\, \BB\B$ is homotopy natural on lax
functors, the homotopy equivalence $\BB\B\,\simeq \,|\nabla \B|$ is
homotopy natural on oplax functors, and the homotopy equivalence
$|\nabla \B|\,\simeq\, |\nabla_{\hspace{-2pt}{\mathrm{u}}}\B|$ is
natural on normal oplax functors.}

 The proofs of these results are quite long and technical. Therefore, to avoid hampering
 the flow of the paper, we have put most
 of them into an appendix, comprising Section \ref{appendix}.

\section{Preparation: The constructions involved}
\label{Preliminaires} This section aims to make this paper as
self-contained as possible; therefore, at the same time as fixing
notations and terminology, we also review some necessary aspects and
results about homotopy pullbacks of topological spaces, comma
bicategories, and classifying spaces of small bicategories that are
used throughout the paper. However, some results, mainly those in
Lemmas \ref{simlem}, \ref{fact1}, and \ref{facts2}, are actually new.
For a detailed study of the definition of homotopy pullback of
continuous maps we refer the reader to Mather's original paper
~\cite{Mather1976} and to the more recent approach by Doeraene
~\cite{Doeraene1998}. For a general background on simplicial sets
and homotopy pullbacks in model categories, we recommend the books
by Goerss and Jardine ~\cite{GJ1999} and Hirschhorn
~\cite{Hirschhorn2009}. For a complete description of bicategories,
lax functors, and lax transformations, we refer the reader to the
papers by B\'enabou ~\cite{Benabou1967,Benabou1973} and Street
~\cite{Street1996}.

\subsection{Homotopy pullbacks.} Throughout this paper,
{\em all topological spaces have the homotopy type of CW-complexes},
so that a continuous map is a homotopy equivalence if and only if it
is a weak homotopy equivalence.

If $X\overset{f}\to B\overset{\ g}\leftarrow Y$ are continuous maps,
recall that its {\em homotopy-fibre product} is the space
$$X\times^{_\mathrm{h}}_BY=X\times_B B^I\!\times_B Y$$ consisting of
triples $(x,\gamma,y)$ with $x$ a point of $X$, $y$ a point of $Y$,
and $\gamma:I\to B$ a path of $B$ joining $f(x)$ and $g(y)$. This
space occurs in the so-called \emph{standard homotopy pullback} of
$f$ and $g$, that is, the homotopy commutative square
$$
\xymatrix@C=20pt@R=20pt{
X\times^{_\mathrm{h}}_BY \ar[r]^-{f'} \ar[d]_{g'} \ar@{}@<25pt>[d]|(.4){\overset{F}\Rightarrow}
& Y\ar[d]^{g}
\\ X\ar[r]^{f} & B}
$$
where $f'$ and $g'$ are the evident projection maps, and
$F:(X\times^{_\mathrm{h}}_BY)\times I\to B$ is the homotopy from
$fg'$ to $gf'$ given by $F(x,\gamma,y,t)=\gamma(t)$. In particular,
for any continuous map $g:Y\to B$ and any point $b\in B$, we have
the standard homotopy pullback
$$
\xymatrix@C=20pt@R=20pt{
\mathrm{Fib}(g,b) \ar[r]\ar[d] \ar@{}@<25pt>[d]|(.4){\overset{F}\Rightarrow}
&Y \ar[d]^{g}
\\ \mathrm{pt}\ar[r]^{b} & B,}
$$
where $\mathrm{Fib}(g,b)=\mathrm{pt}\times^{_\mathrm{h}}_BY$ is the
{\em homotopy-fibre} of $g$ over $b$. (We use pt to denote a
one-point space.) For any $y\in g^{-1}(b)$, one has the exact
homotopy sequence
$$
\cdots\to\pi_{n+1}(B,b)\to\pi_n(\mathrm{Fib}(g,b),(\mathrm{Ct}_b,y))\to \pi_n(Y,y)\to\pi_n(B,b)\to\cdots,
$$
from which $g$ is a homotopy equivalence if and only if all its
homotopy fibres are contractible.

More generally, following Mather's definition in ~\cite{Mather1976},
a homotopy commutative square
\begin{equation}\label{hps}
\begin{array}{c}
\xymatrix@C=20pt@R=20pt{
Z \ar[r]^{f'} \ar[d]_{g'} \ar@{}@<20pt>[d]|(.4){\overset{H}\Rightarrow}
& Y\ar[d]^{g}
\\ X\ar[r]^{f} & B,}
\end{array}
\end{equation}
where $H:fg'\Rightarrow gf'$ is a homotopy, is called a {\em
homotopy pullback} whenever the induced {\em whisker} map below is a
homotopy equivalence.$$w: Z\to
X\times^{_\mathrm{h}}_BY,\hspace{0.3cm} z\mapsto
(g'(z),H|_{z\times\!I},f'(z))$$

Throughout the paper, we use only basic well-known properties of
homotopy pullbacks.  For instance, the {\em homotopy-fibre
characterization of homotopy pullback squares}: The homotopy
commutative square $(\ref{hps})$
 is a homotopy pullback if and only if, for any point
$x\in X$, the composite square
$$ \xymatrix@C=20pt@R=20pt{
\mathrm{Fib}(g',x)\ar[r]\ar[d]\ar@{}@<22pt>[d]|{\Rightarrow}&Z
\ar[d]_{g'} \ar@{}@<18pt>[d]|{\Rightarrow}
\ar[r]^{f'}&Y\ar[d]^{g}\\
\mathrm{pt}\ar[r]^{x}&X\ar[r]^{f}&B}
$$
is a homotopy pullback. That is, if and only if the induced whisker
maps on homotopy fibres are homotopy equivalences, $ w:
\mathrm{Fib}(g',x) \overset{\sim\ }\to \mathrm{Fib}(g,f(x))$; or the
{\em two out of three property} of homotopy pullbacks: Let
$$
\xymatrix@C=20pt@R=20pt{
\bullet \ar[r] \ar[d]\ar@{}[rd]|{\Rightarrow}
& \bullet \ar[r] \ar[d] \ar@{}[rd]|{\Rightarrow}
& \bullet \ar[d]
\\ X' \ar[r] & X \ar[r] & \bullet}
$$
be a diagram of homotopy commutative squares. If the right square is
a homotopy pullback, then the left square is a homotopy pullback if
and only if the composite square is as well. If $\pi_0X'\to\pi_0X$
is onto and the left and composite squares are homotopy pullbacks,
then the right-hand square is a homotopy pullback.

Many other properties are easily deduced from the above ones. For
example, the square $(\ref{hps})$ is a homotopy pullback whenever
both maps $g$ and $g'$ are homotopy equivalences. If the square is a
homotopy pullback and the map
 $g$ is a homotopy equivalence, then so is $g'$. If the square is a homotopy pullback, $g'$
 is a homotopy equivalence, and the map $\pi_0X\to\pi_0B$ is surjective, then $g$
 is also a homotopy equivalence.

In ~\cite[Proposition 5.4 and Corollary 5.5]{CPS2006}, Chach\'olski,
Pitsch, and Scherer characterize continuous maps that always
produces homotopy pullback squares when one pulls back with them.
Along similar lines, we prove the needed lemma below for maps
induced on geometric realizations by simplicial maps. More
precisely, we characterize those simplicial maps $g:Y\to B$ such
that, for any simplicial map $f:X\to B$, the pullback square of
simplicial sets
\begin{equation}\label{pbsqs}\begin{array}{c}
\xymatrix@C=20pt@R=20pt{X\times_BY\ar[d]_{g'}\ar[r]^-{f'}&Y\ar[d]^{g}\\
X\ar[r]^-{f}&B}
\end{array}
\end{equation}
induces, by taking geometric realizations, a homotopy pullback
square of spaces.
 To do so, recall the
canonical homotopy colimit decomposition of a simplicial map, which
allows the source of the map to be written as the homotopy colimit
of its fibres over the simplices of the target: for a simplicial set
$B$, we can consider its category of simplices
$\Delta\!\downarrow\!B$ whose objects are the simplicial maps
$\Delta[n]\to B$ and whose morphisms are the obvious commutative
triangles. For a simplicial map $g:Y\to B$, we can then associate a
functor from $\Delta\!\downarrow\!B$ to the category of spaces by
mapping a simplex $x:\Delta[n]\to B$ to the geometric realization
$|g^{-1}(x)|$ of the simplicial set $g^{-1}(x)$ defined by the
pullback square
$$
\xymatrix@C=20pt@R=20pt{
g^{-1}(x)\ar[r]\ar[d]&Y\ar[d]^{g}\\ \Delta[n]\ar[r]^{x}&B.
}
$$
By ~\cite[Lemma IV.5.2]{GJ1999}, in the induced commutative diagram
of spaces,
$$
\xymatrix@C=20pt@R=20pt{ \underset{x:\Delta[n]\to B}{\mathrm{hocolim}}|g^{-1}(x)|\ar[d]
\ar@{}[rd]|{(a)}
\ar[r]^-{\sim}&|Y|\ar[d]^{|g|}\\
\underset{x:\Delta[n]\to B}{\mathrm{hocolim}}|\Delta[n]|\ar[r]^-{\sim}
&|B|
}
$$
the horizontal maps are both homotopy equivalences.

\begin{lemma} \label{simlem} For any given simplicial map  $g:Y\to B$, the following
statements are equivalent:

\vspace{0.2cm} $(i)$ For any simplex of $B$,  $x:\Delta[n]\to B$,
and for any simplicial map $\sigma:\Delta[m]\to \Delta[n]$, the
induced map
 $|g^{-1}(x\sigma)|\to |g^{-1}(x)|$
is a homotopy equivalence.

\vspace{0.2cm} $(ii)$ For any simplex $x:\Delta[n]\to B$, the
induced pullback square of spaces
$$
\xymatrix{|g^{-1}(x)|\ar[d]\ar[r]&|Y|\ar[d]^{|g|}\\
|\Delta[n]|\ar[r]^-{|x|}&|B|}
$$
 is a homotopy pullback.

$(iii)$ For any simplicial map $f:X\to B$, the pullback square of
spaces
$$
\xymatrix@C=20pt@R=25pt{|X\times_BY|\ar[d]_{|g'|}\ar[r]^-{|f'|}&|Y|\ar[d]^{|g|}\\
|X|\ar[r]^-{|f|}&|B|,}
$$
induced by $(\ref{pbsqs})$, is a homotopy pullback.
\end{lemma}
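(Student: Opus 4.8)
The plan is to prove the three statements equivalent by establishing $(i)\Rightarrow(ii)\Rightarrow(iii)\Rightarrow(i)$, leaning throughout on the homotopy-colimit decomposition recalled just before the lemma and on the homotopy-fibre characterization of homotopy pullbacks. The overall strategy is to exploit the square labelled $(a)$: since both horizontal maps in that diagram are homotopy equivalences, the question of whether $|g|$ behaves well under pullback is transported to a question about the functor $x\mapsto|g^{-1}(x)|$ on the simplex category $\Delta\!\downarrow\!B$, where condition $(i)$ lives.

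\emph{First}, for $(i)\Rightarrow(ii)$, I would fix a simplex $x:\Delta[n]\to B$ and apply the homotopy-colimit decomposition to the restricted map $g^{-1}(x)\to\Delta[n]$ rather than to $g$ itself. Condition $(i)$ says precisely that the functor $\sigma\mapsto|g^{-1}(x\sigma)|$ on $\Delta\!\downarrow\!\Delta[n]$ sends every morphism to a homotopy equivalence, i.e.\ is a homotopy-constant functor on a category with terminal object $\mathrm{id}_{\Delta[n]}$; hence its homotopy colimit is homotopy equivalent to the value at the terminal object, $|g^{-1}(x)|$, while $|\Delta[n]|$ is contractible. Feeding this into square $(a)$ for the map $g^{-1}(x)\to\Delta[n]$ should identify the homotopy fibres of $|g^{-1}(x)|\to|\Delta[n]|$ over every point with $|g^{-1}(x)|$ up to homotopy, and then the homotopy-fibre characterization yields that the square in $(ii)$ is a homotopy pullback.

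\emph{Second}, for $(ii)\Rightarrow(iii)$, I would again invoke square $(a)$, now for the original $g$, together with the analogous decomposition of the pulled-back map $g':X\times_BY\to X$ built from $f:X\to B$. The key observation is that the fibres of $g'$ over the simplices of $X$ are exactly the fibres of $g$ over their images in $B$: for a simplex $y:\Delta[m]\to X$ one has $(g')^{-1}(y)\cong g^{-1}(fy)$. Condition $(ii)$, stated over all simplices of $B$, therefore supplies the required homotopy-pullback property over each simplex of $X$, and assembling these via the naturality of the homotopy-colimit decomposition (Lemma IV.5.2 of Goerss--Jardine) gives the homotopy pullback in $(iii)$.

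\emph{Finally}, $(iii)\Rightarrow(i)$ is the easy direction: given a simplex $x:\Delta[n]\to B$ and a map $\sigma:\Delta[m]\to\Delta[n]$, apply $(iii)$ with $f=x$ and with $f=x\sigma$; since $|\Delta[m]|$ and $|\Delta[n]|$ are contractible and $|\sigma|$ is a homotopy equivalence between them, comparing the two resulting homotopy pullbacks forces the map on homotopy fibres $|g^{-1}(x\sigma)|\to|g^{-1}(x)|$ to be a homotopy equivalence. I expect the main obstacle to be the bookkeeping in $(i)\Rightarrow(ii)$, specifically making precise that a functor to spaces carrying all morphisms to homotopy equivalences has its homotopy colimit computed by the value at a terminal (or, after a cofinality argument, initial) object, and checking that this identification is compatible with the maps in square $(a)$; the remaining implications are essentially formal consequences of the stated properties of homotopy pullbacks and the fibrewise description of pullbacks of simplicial sets.
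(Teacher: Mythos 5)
Your cycle $(i)\Rightarrow(ii)\Rightarrow(iii)\Rightarrow(i)$ is a reasonable architecture, and your $(iii)\Rightarrow(i)$ is correct (it is essentially the paper's $(ii)\Rightarrow(i)$: two-out-of-three plus contractibility of $|\Delta[m]|$ and $|\Delta[n]|$). But your $(i)\Rightarrow(ii)$ — the heart of the lemma — has a genuine gap. Although your opening strategy correctly points at the functor $x\mapsto|g^{-1}(x)|$ on the full simplex category $\Delta\!\downarrow\!B$, the step you actually execute works only with the restricted map $g^{-1}(x)\to\Delta[n]$ and its decomposition over $\Delta\!\downarrow\!\Delta[n]$. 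Such a local analysis can only produce statements about the map $|g^{-1}(x)|\to|\Delta[n]|$; it never touches the homotopy fibres of the global map $|g|:|Y|\to|B|$, and by the homotopy-fibre characterization the square in $(ii)$ is a homotopy pullback precisely when the whisker maps into $\mathrm{Fib}(|g|,|x|(p))$ are equivalences — identifying that right-hand side is exactly the content you must supply. Worse, every statement you actually establish in this step is hypothesis-free: the homotopy colimit of \emph{any} functor over a category with terminal object is its value there (the inclusion of a terminal object is homotopy cofinal), and the homotopy fibres of any map to the contractible space $|\Delta[n]|$ are the total space. So your sketch, as written, would ``prove'' $(ii)$ for an arbitrary simplicial map, which is false: for $g:\partial\Delta[1]\hookrightarrow\Delta[1]$ and $x$ a vertex of $\Delta[1]$, the fibre $|g^{-1}(x)|$ is a single point, while the homotopy pullback of $\mathrm{pt}\to|\Delta[1]|\leftarrow|\partial\Delta[1]|$ is two points. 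The missing ingredient is Quillen's Lemma \cite[page 14]{Quillen1973} applied over the \emph{whole} category $\Delta\!\downarrow\!B$: hypothesis $(i)$ says exactly that $x\mapsto|g^{-1}(x)|$ carries every morphism of $\Delta\!\downarrow\!B$ to a homotopy equivalence, and the Lemma then makes each square $|g^{-1}(x)|\to\mathrm{hocolim}\,|g^{-1}(-)|$ over $\mathrm{pt}\to\BB(\Delta\!\downarrow\!B)$ a homotopy pullback; pasting with square $(a)$ and the comparison $\mathrm{hocolim}\,|\Delta[n]|\simeq\BB(\Delta\!\downarrow\!B)$, $|\Delta[n]|\simeq\mathrm{pt}$ gives $(ii)$. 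Some such global input (Quillen's Lemma, a quasifibration criterion, or a descent theorem) is unavoidable.

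Your $(ii)\Rightarrow(iii)$ is under-justified as well, though its outline can be completed. The reduction via $g'^{-1}(y)\cong g^{-1}(fy)$ is right, and $(ii)$ for $g$ does make every composite square over a simplex $y$ of $X$ a homotopy pullback. But the ``assembling'' you then invoke is not the naturality of \cite[Lemma IV.5.2]{GJ1999}; it is descent (universality of homotopy colimits: a homotopy colimit of homotopy pullbacks along a fixed map $|Y|\to|B|$ is again a homotopy pullback), a substantive theorem outside the basic properties the paper allows itself. The paper instead argues elementarily in its $(i)\Rightarrow(iii)$: it transfers condition $(i)$ from $g$ to $g'$ via the fibre isomorphism, applies the already-established equivalence $(i)\Leftrightarrow(ii)$ to both $g$ and $g'$ at the vertices of $X$, and concludes with the homotopy-fibre characterization since every point of $|X|$ is path-connected to a $0$-cell. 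If you keep your route, you must state and prove the descent principle; and note that within your cyclic scheme you cannot quote $(ii)\Rightarrow(i)$ while proving $(ii)\Rightarrow(iii)$ unless you prove it there (fortunately it is the easy two-out-of-three argument).
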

\begin{proof}
$(i)\Rightarrow (ii)$: Let $x:\Delta[n]\to B$ be any simplex of $B$.
We have the diagram
$$
\xymatrix@C=20pt@R=20pt{|g^{-1}(x)|\ar[r]\ar[d]\ar@{}[rd]|{(b)}&
\underset{x:\Delta[n]\to B}{\mathrm{hocolim}}|g^{-1}(x)|\ar[d]\ar@{}[rd]|{(a)}
\ar[r]^-{\sim}&|Y|\ar[d]^{|g|}\\
|\Delta[n]|\ar[r]^-{|x|}\ar[d]_{\wr}\ar@{}[rd]|{(c)}&
\underset{x:\Delta[n]\to B}{\mathrm{hocolim}}|\Delta[n]|\ar[r]^-{\sim}\ar[d]^{\wr}
&|B|\\
\mathrm{pt}\ar[r]^{x}&\underset{x:\Delta[n]\to B}{\mathrm{hocolim}}\,\mathrm{pt},
&
}
$$
where $\underset{x:\Delta[n]\to B}{\mathrm{hocolim}}\,\mathrm{pt}=
\BB(\Delta\!\downarrow\!B)$ is the classifying space of the simplex
category. Since, by Quillen's Lemma ~\cite[page 14]{Quillen1973},
the composite square $(b)+(c)$ is a homotopy pullback, it follows
that $(b)$ is a homotopy pullback. Therefore,  the composite
$(b)+(a)$ is as well.

$(ii)\Rightarrow (i)$:  For any simplicial map
$\sigma:\Delta[m]\to\Delta[n]$ and any simplex $x:\Delta[n]\to B$,
 the right side and the large square in the diagram of spaces
$$
\xymatrix@C=20pt@R=20pt{|g^{-1}(x\sigma)|\ar[r]\ar[d]&|g^{-1}(x)|\ar[r]\ar[d]&|Y|\ar[d]^{|g|}\\
|\Delta[m]|\ar[r]^{|\sigma|}&|\Delta[n]|\ar[r]^{|x|}&|B|
}
$$
are both homotopy pullback, and therefore so is the left-hand one.
 As  $|\Delta[m]|$ and $|\Delta[n]|$ are both contractible, the map
$|\sigma|$ is a homotopy equivalence, and therefore the map
$|g^{-1}(x\sigma)|\to |g^{-1}(x)|$ is a homotopy equivalence.

$(i)\Rightarrow (iii)$: Suppose we have the pullback square of
simplicial sets $(\ref{pbsqs})$. Then,   for any simplex
$x:\Delta[n]\to X$ of $X$, we have a natural isomorphism of fibres
$g'^{-1}(x)\cong g^{-1}(fx)$, and it follows that the map $g'$ also
satisfies the same condition $(i)$ as $g$ does. Then, by the already
proven part $(i)\Leftrightarrow (ii)$, we know that, for any vertex
$x:\Delta[0]\to X$, both the left side and the composite square in
the diagram
$$
\xymatrix@C=20pt@R=20pt{|g'^{-1}(x)|\cong|g^{-1}(fx)|\ar[r]\ar[d]&|X\times_BY|\ar[r]\ar[d]_{|g'|}&|Y|\ar[d]^{|g|}\\
\mathrm{pt}=|\Delta[0]|\ar[r]^{|x|}&|X|\ar[r]^{|f|}&|B|
}
$$
are homotopy pullbacks. Therefore, from the diagram on whisker maps
$$
\xymatrix{|g'^{-1}(x)|\ar[r]^{\sim}\ar[d]_{\wr}&\mathrm{Fib}(|g'|,|x|)\ar[d]^{w}\\
|g^{-1}(fx)|\ar[r]^{\sim}&\mathrm{Fib}(|g|,|fx|),}
$$
we conclude that the map $\mathrm{Fib}(|g'|,|x|)\to
\mathrm{Fib}(|g|,|fx|)$ is a homotopy equivalence. Since the
homotopy fibres of any map over points connected by a path are
homotopy equivalent, and any point of $|X|$ is path-connected with a
0-cell $|x|$ defined by some 0-simplex $x:\Delta[0]\to X$ as above,
the result follows from the homotopy fibre characterization.

$(iii)\Rightarrow (ii)$: This is obvious.
\end{proof}

\subsection{Some bicategorical conventions.} For bicategories, we use the same conventions and
notations as Carrasco, Cegarra, and Garz\'on in
~\cite[\S2.4]{CCG2010} and ~\cite[\S 2.1]{CCG2011}. Given any
bicategory $\B$, its set of {\em objects} or {\em $0$-cells} is
denoted by $\Ob\B$. For each ordered pair of objects $(b_0,b_1)$ of
$\B$, $\B(b_0,b_1)$ denotes its hom-category whose objects $f:b_0\to
b_1$ are called the {\em $1$-cells} in $\B$ with source $b_0$ and
target $b_1$, and whose morphisms ${\beta:f\Rightarrow g}$ are
called 2-{\em cells} of $\B$. The composition in each hom-category
$\B(b_0,b_1)$, that is, the {\em
  vertical composition} of 2-cells, is denoted by the symbol
$``\cdot"$, while the symbol $``\circ"$ is used to denote the {\em
horizontal composition} functors:
$$
\xymatrix@C=30pt{ b_0 \ar@/^1.2pc/[r]^{f}
  \ar@/_1.2pc/[r]_{h}\ar[r]|(.6){g} \ar@{}@<6pt>[r]|{\Downarrow\beta}
  \ar@{}@<-6pt>[r]|{\Downarrow\gamma} &b_1 }\ \overset{\cdot}\mapsto \
\xymatrix{ b_0 \ar@/^0.8pc/[r]^{f} \ar@/_0.8pc/[r]_{h}
  \ar@{}[r]|{\Downarrow \gamma\cdot\beta} &b_1, }\hspace{0.3cm}
\xymatrix @C=20pt {b_0 \ar@/^0.7pc/[r]^{f_1} \ar@/_0.7pc/[r]_{g_1}
  \ar@{}[r]|{\Downarrow\beta_1} &b_1 \ar@/^0.7pc/[r]^{f_2}
  \ar@/_0.7pc/[r]_{g_2} \ar@{}[r]|{\Downarrow\beta_2} &b_2}\
\overset{\circ}\mapsto\ \xymatrix @C=30pt {b_0
  \ar@/^0.8pc/[r]^{f_2\circ f_1} \ar@/_0.8pc/[r]_{g_2\circ g_1}
  \ar@{}[r]|{\Downarrow\beta_2\circ \beta_1} &b_2.}
$$
Identities are denoted as $1_f:f\Rightarrow f$, for any 1-cell $f$,
and $1_b:b\to b$, for any $0$-cell $b$.  The {\em associativity
  constraints} of the bicategory are denoted by
$$\aso_{f_3,f_2,f_1}:(f_3\circ f_2)\circ f_1
\cong f_3\circ (f_2\circ f_1),
$$
which are natural in $(f_3,f_2,f_1)\in
\B(b_2,b_3)\times\B(b_1,b_2)\times\B(b_0,b_1)$. The {\em left and
  right unity constraints} are denoted by
$\bl_f: 1_{b_1}\circ f \cong f$ and $\br_f:f\circ 1_{b_0}\cong f$.
These are natural in $f\in \B(b_0,b_1)$. These constraint 2-cells
must satisfy the well-known pentagon and triangle coherence
conditions.

A bicategory in which all the constraints are identities is a 2-{\em
  category}. It is the same as a category enriched in the category
$\Cat$ of small categories. As each category $\B$ can be considered
as a 2-category in which all deformations are identities, that is,
in which each category $\B(b_0,b_1)$ is discrete, several times
throughout the paper, categories are considered as special
bicategories.

A {\em lax functor} is written as a pair $ F=(F,\widehat{F}):\B \to
\C$, since we will generally denote its structure constraints by
$$\widehat{F}_{f_2,f_1}:Ff_2\circ Ff_1\Rightarrow F(f_2\circ
f_1),\hspace{0.4cm} \widehat{F}_b:1_{Fb}\Rightarrow F1_b,$$ for each
pair of composable 1-cells, and each object of $\B$. Recall that the
structure 2-cells $\widehat{F}_{f_2,f_1}$ are natural in
$(f_2,f_1)\in\B(b_1,b_2)\times\B(b_0,b_1)$ and they satisfy the
usual coherence conditions. Replacing the constraint 2-cells above
by $\widehat{F}_{f_2,f_1}:F(f_2\circ f_1)\Rightarrow Ff_2\circ Ff_1$
and $\widehat{F}_b: F(1_b)\Rightarrow 1_{Fb}$, we have the notion of
{\em
  oplax functor} $F=(F,\widehat{F}): \B\to \C$.  Any lax or oplax
functor $F$ is termed a {\em pseudo-functor} or {\em homomorphism}
whenever all the structure constraints $\widehat{F}_{f_2,f_1}$ and
$\widehat{F}_b$ are invertible. When these 2-cells are all
identities, then $F$ is called a 2-{\em functor}. If all the unit
constraints $\widehat{F}_b$ are identities, then the lax or oplax
functor is qualified as (strictly) {\em unitary} or {\em normal}.

If $F,F':\B \to \C$ are lax functors, then a {\em lax
transformation} ${\alpha=(\alpha,\widehat{\alpha}):F\Rightarrow F'}$
consists of morphisms ${\alpha b:Fb\to F'b}$, $b\in \mbox{Ob}\B$,
and 2-cells
$$
\xymatrix@C=20pt@R=20pt{Fb_0\ar[d]_{\alpha b_0}\ar[r]^{Ff}
\ar@{}@<25pt>[d]|(.38){\widehat{\alpha}_f}|(.55){\Rightarrow}
&Fb_1\ar[d]^{\alpha b_1}\\
F'b_0\ar[r]_{F'\!f}&F'b_1}
$$
which are natural on the 1-cells $f:b_0\to b_1$ of $\B$, subject to
the usual coherence axioms.  Replacing the structure deformation
above by $\widehat{\alpha}_f: \alpha b_1\circ Ff\Rightarrow
F'\!f\circ \alpha b_0$, we have the notion of {\em oplax
transformation} $\alpha:F\Rightarrow F'$. Any lax or oplax
transformation $\alpha$ is termed a {\em pseudo-transformation}
whenever all the naturality 2-cells $\widehat{\alpha}_{f}$ are
invertible.  Similarly, we have the notions of  lax, oplax, and
pseudo transformation between oplax functors.

\subsection{Homotopy pullback bicategories.}
\label{HPB} We present a bicategorical comma construction in some
detail, since it is fundamental for the results of this paper.
However, we are not claiming much originality since variations of
the quite ubiquitous `comma category' construction have been
considered (just to define `homotopy pullbacks') in many general
frameworks of enriched categories (where a homotopy theory has been
established); see for instance Grandis ~\cite{Gran94}.

Let $\xymatrix@C=18pt{\A\ar[r]^-{F} & \B &\A'\ar[l]_(.4){F'}}$ be a
diagram where $\A$, $\B$, and $\A'$ are bicategories, $F$ is a lax
functor, and $F'$ is an oplax functor. The ``{\em homotopy pullback
bicategory}"
\begin{equation}\label{ff'}F\!\downarrow\!F'\end{equation} is defined
as follows:

\vspace{0.2cm} $\bullet$ {\em The $0$-cells} of $F\!\downarrow\!F'$
are triples $(a,f,a')$ with $a$ a $0$-cell of $\A$, $a'$ a $0$-cell
of $\A'$, and $f:Fa\to F'a'$ a $1$-cell in $\B$.

\vspace{0.2cm} $\bullet$ {\em A $1$-cell}
$(u,\beta,u'):(a_0,f_0,a'_0)\to (a_1,f_1,a'_1)$ of
$F\!\downarrow\!F'$ consists of a $1$-cell $u:a_0\to a_1$ in $\A$, a
$1$-cell $u':a'_0\to a'_1$ in $\A'$, and $2$-cell $\beta: F'u'\circ
f_0 \Rightarrow f_1\circ Fu$  in $\B$,
$$
\xymatrix@C=20pt@R=20pt{
Fa_0\ar[r]^{Fu}\ar[d]_{f_0}\ar@{}@<25pt>[d]|(.5){\Rightarrow}|(.35){\beta}
  & Fa_1\ar[d]^{f_1}
\\ F'a'_0\ar[r]^{F'\!u'} & F'a'_1.}
$$

$\bullet$ {\em A $2$-cell} in $F\!\downarrow\!F'$,
$\xymatrix@C=30pt{
(a_0,f_0,a'_0)\ar@/^1pc/[r]^{(u,\beta,u')}\ar@{}[r]|(.5){\Downarrow
(\alpha,\alpha')}
\ar@/_1pc/[r]_{(\bar{u},\bar{\beta},\bar{u}')}&(a_1,f_1,a'_1),} $ is
given by a $2$-cell $\alpha:u\Rightarrow \bar{u}$ in $\A$ and a
$2$-cell $\alpha':u'\Rightarrow \bar{u}'$ in $\A'$ such that the
 diagram below commutes.
$$
\xymatrix@C=35pt{F'u'\circ f_0\ar@2[r]^{F'\!\alpha'\circ 1}\ar@2[d]_{\beta}
  & F'\bar{u}'\circ f_0\ar@2[d]^{\bar{\beta}}
\\ f_1\circ Fu\ar@2[r]^{1\circ\, F\!\alpha} & f_1\circ F\bar{u}}
$$

$\bullet$ {\em The vertical composition of $2$-cells} in
$F\!\downarrow \!F'$ is  induced by the vertical composition laws in
$\A$ and $\A'$, thus $ (\bar{\alpha},\bar{\alpha}')\cdot
(\alpha,\alpha')= (\bar{\alpha}\cdot
\alpha,\bar{\alpha}'\cdot\alpha')$. The identity at a $1$-cell is
given by $1_{(u,\beta,u')}=(1_u,1_{u'})$.

\vspace{0.2cm} $\bullet$ {\em  The horizontal composition of two
$1$-cells} in $F\!\downarrow \!F'$,
\begin{equation}\label{tc1cs}
  \xymatrix@C=40pt{
(a_0,f_0,a'_0)\ar[r]^{(u_1,\beta_1,u'_1)}
&(a_1,f_1,a'_1)\ar[r]^{(u_2,\beta_2,u'_2)}
& (a_2,f_2,a'_2)},
\end{equation}
is the 1-cell $ (u_2,\beta_2,u'_2)\circ (u_1,\beta_1,u'_1) =
(u_2\circ u_1, \beta_2\circledcirc \beta_1,u'_2\circ u'_1 ) $, where
$\beta_2\circledcirc \beta_1$ is the 2-cell pasted of the diagram in
$\B$
\begin{equation}\begin{array}{c}
\xymatrix@C=30pt{
\ar@{}@<-50pt>[d]|{\textstyle \beta_2\circledcirc \beta_1 =}
Fa_0\ar[r]^{Fu_1}\ar[d]_{f_0}
               \ar@{}@<25pt>[d]|(.3){\beta_1}|(.45){\Rightarrow}
\ar@{}@<14pt>[rr]|{\widehat{F}\,\Uparrow}
\ar@/^22pt/[rr]^{F(u_2\circ u_1)}
&Fa_1\ar[r]^{Fu_2}\ar[d]_{f_1}
\ar@{}@<25pt>[d]|(.3){\beta_2}|(.45){\Rightarrow}
&Fa_2\ar[d]^{f_2}\\
F'\!a'_0\ar[r]^{F'\!u'_1}
\ar@{}@<-12pt>[rr]|{\widehat{F}'\Uparrow}
\ar@/_22pt/[rr]_{F'(u'_2\circ u'_1)}
&F'\!a'_1\ar[r]^{F'\!u'_2}&F'\!a'_2,
}
\end{array}
\end{equation}
$
\begin{array}{cl}\text{that is, }
\beta_2\circledcirc \beta_1=\Big(&\hspace{-0.3cm}F'(u'_2\circ u'_1)\circ f_0
\overset{\widehat{F}'\!\circ 1}\Longrightarrow
 (F'u'_2\circ F'u'_1)\circ f_0 \overset{\aso}\Longrightarrow
 F'u'_2\circ (F'u'_1\circ f_0) \overset{1\circ \beta_1}\Longrightarrow\\[6pt]
 &\hspace{-0.3cm}F'u'_2\circ (f_1\circ Fu_1)\overset{\aso^{-1}}\Longrightarrow
 (F'u'_2\circ f_1)\circ Fu_1  \overset{\beta_2\circ 1}\Longrightarrow
 (f_2\circ Fu_2)\circ Fu_1  \overset{\aso}\Longrightarrow\\[6pt]
 &\hspace{-0.3cm}f_2\circ (Fu_2\circ Fu_1)
\overset{1\circ \widehat{F}}\Longrightarrow
   f_2\circ F(u_2\circ u_1)\Big).
\end{array}
$

$\bullet$ {\em The horizontal composition of $2$-cells} in
$F\!\downarrow \!F'$ is given by composing horizontally the 2-cells
in $\A$ and $\A'$, thus $ (\alpha_2,\alpha'_2)\circ
(\alpha_1,\alpha'_1)= (\alpha_2\circ
\alpha_1,\alpha'_2\circ\alpha'_1) $.

$\bullet$ {\em The identity $1$-cell} in $F\!\downarrow \!F'$, at an
object $(a,f,a')$, is $(1_a,\overset{_\circ}{1}_{(a,f,a')},1_{a'})
$, where $\overset{_\circ}{1}_{(a,f,a')}$ is the 2-cell in $\B$
obtained by pasting the diagram
$$
\xymatrix@C=30pt{
\ar@{}@<-50pt>[d]|{\textstyle \overset{_\circ}{1}_{(a,f,a')} =}
Fa\ar[rr]^(.3){1_{Fa}}\ar[d]_{f}
\ar@{}@<14pt>[rr]|{\widehat{F}\,\Uparrow}
\ar@/^22pt/[rr]^{F1_a}
&\ar@{}[d]|(.4){\br^{-1}\cdot \bl}|(.55){\cong}
&Fa\ar[d]^{f}\\
F'\!a'\ar[rr]_(.3){1_{F'\!a'}}
\ar@{}@<-12pt>[rr]|{\widehat{F}'\Uparrow}
\ar@/_22pt/[rr]_{F'1_{a'}}
&&F'\!a',
}
$$
that is, $ \overset{_\circ}{1}_{(a,f,a')}=\Big(F'1_{a'}\circ
f\overset{\widehat{F}'\!\circ 1}\Longrightarrow 1_{F'\!a'}\circ
f\overset{\bl}\Longrightarrow f \overset{\br^{-1}}\Longrightarrow
f\circ 1_{Fa}\overset{1\circ \widehat{F}}\Longrightarrow f\circ
F1_a\Big)$.

$\bullet$ {\em The associativity, right and left unit constraints}
of the bicategory  $F\!\downarrow \!F'$ are provided by those of
$\A$ and $\A'$ by the formulas
$$
\aso_{(u_3,\beta_3,u'_3),(u_2,\beta_2,u'_2),(u_1,\beta_1,u'_1)}\!=
\!(\aso_{u_3,u_2,u_1},\aso_{u'_3,u'_2,u'_1}),\ \
\bl_{(u,\beta,u')}\!=\!(\bl_u,\bl_{u'}),\ \ \br_{(u,\beta,u')}\!=\!(\br_u,\br_{u'}).
$$

\subsubsection{The main square.} There is a  (non-commutative!) square, which is of fundamental
interest for the discussions below:
\begin{equation}\label{pqsquare}\begin{array}{c}
    \xymatrix@C=20pt@R=20pt{
      F\!\downarrow\!F'\ar[r]^-{P'}\ar[d]_{P}
&\A'\ar[d]^{F'}\\ \A\ar[r]^{F}&\B}
\end{array}
\end{equation}
where $P$ and $P'$ are projection 2-functors, which act on cells of
$F\!\downarrow\!F'$ by
\begin{equation}\label{PP'}
\xymatrix{
a_0\ar@/^8pt/[r]^{u}\ar@/_8pt/[r]_{\bar{u}}\ar@{}[r]|{\Downarrow\alpha}&a_1}
\xymatrix@C=12pt{&\ar@{|->}[l]_{P}}
 \xymatrix@C=3pc{
(a_0,f_0,a'_0)\ar@/^12pt/[r]^{(u,\beta,u')}
             \ar@/_12pt/[r]_{(\bar{u},\bar{\beta},\bar{u}')}
            \ar@{}[r]|{\Downarrow(\alpha,\alpha')}
&(a_1,f_1,a'_1)}\xymatrix@C=12pt{\ar@{|->}[r]^{P'}&}
\xymatrix{
a'_0\ar@/^8pt/[r]^{u'}\ar@/_8pt/[r]_{\bar{u}'}\ar@{}[r]|{\Downarrow\alpha'}
&a'_1.}
\end{equation}

\subsubsection{Two pullback squares.} We consider here three  particular cases of the
    construction $(\ref{ff'})$:
\begin{itemize}

\item[-]  For any lax functor $F:\A\to \B$, the bicategory
    $F\!\downarrow\!\B:= F\!\downarrow\!1_\B.$

\vspace{0.2cm}
\item[-] For any oplax functor $F':\A'\to \B$, the bicategory
    $\B\!\downarrow\!F':= 1_\B\!\downarrow\!F'$.

\vspace{0.2cm}
\item[-]  For any bicategory $\B$, the
    bicategory $\B\!\downarrow\!\B:=1_\B\!\downarrow\!1_\B$.
\end{itemize}

There are commutative squares
\begin{equation}\label{pullbackSquares}\begin{array}{cc}
\xymatrix@C=20pt@R=20pt{
F\!\downarrow\!F'\ar[r]^-{\bar{F}}\ar[d]_{P} &\B\!\downarrow\!F'\ar[d]^{P}\\
\A\ar[r]^{F}&\B,}
&
\xymatrix@C=20pt@R=20pt{F\!\downarrow\!F'\ar[r]^-{P'}\ar[d]_{\bar{F}'} &\A'\ar[d]^{F'}\\
 F\!\downarrow\!\B\ar[r]^{P'}&\B,}
\end{array}
\end{equation}
where the first one is in the category of bicategories and lax
functors, and the second one in the category of oplax functors. The
lax functor $\bar{F}: F\!\downarrow\!F'\to \B\!\downarrow\!F'$ in
the first square is given on cells by applying $F$ to the first
components
$$
\xymatrix@C=3pc{
(a_0,f_0,a'_0)\ar@/^12pt/[r]^{(u,\beta,u')}
             \ar@/_12pt/[r]_{({\bar{u}},\bar{\beta},\bar{u}')}
              \ar@{}[r]|{\Downarrow(\alpha,\alpha')}
  &(a_1,f_1,a'_1)}\overset{\bar{F}}\mapsto
\xymatrix@C=3pc{
(Fa_0,f_0,a'_0)\ar@/^12pt/[r]^{(Fu,\beta,u')}
               \ar@/_12pt/[r]_{(F\bar{u},\bar{\beta},\bar{u}')}
               \ar@{}[r]|{\Downarrow(F\alpha,\alpha')}
  &(Fa_1,f_1,a'_1),}
$$
while the oplax functor $\bar{F}':F\!\downarrow\!F'\to
F\!\downarrow\!\B$ in the second one acts on cells through the
application of $F'$ to the last components
$$
\xymatrix@C=3pc{
(a_0,f_0,a'_0)\ar@/^12pt/[r]^{(u,\beta,u')}
             \ar@/_12pt/[r]_{(\bar{u},\bar{\beta},\bar{u}')}
  \ar@{}[r]|{\Downarrow(\alpha,\alpha')}
  &(a_1,f_1,a'_1)}\overset{\bar{F}'}\mapsto
\xymatrix@C=3pc{
(a_0,f_0,F'\!a'_0)\ar@/^12pt/[r]^{(u,\beta,F'\!u')}
  \ar@/_12pt/[r]_{(\bar{u},\bar{\beta},F'\!\bar{u}')}
   \ar@{}[r]|{\Downarrow(\alpha,F'\!\alpha')}
  &(a_1,f_1,F'\!a'_1).}
$$
At any pair of composable 1-cells in $F\!\downarrow\!F'$ as in
$(\ref{tc1cs})$, their respective structure constraints for the
composition are the 2-cells
$$\begin{array}{l}
(\widehat{F}_{u_2,u_1}, 1_{u'_2\circ u'_1}):
\bar{F}(u_2,\beta_2,u'_2)\circ \bar{F}(u_1,\beta_1,u'_1)
\Rightarrow \bar{F}\big((u_2,\beta_2,u'_2)
\circ (u_1,\beta_1,u'_1)\big),\\[5pt]
(1_{u_2\circ u_1},\widehat{F}'_{u'_2,u'_1}):
\bar{F}'\big((u_2,\beta_2,u'_2)\circ (u_1,\beta_1,u'_1)\big)
\Rightarrow \bar{F}'(u_2,\beta_2,u'_2)
\circ\bar{F}'(u_1,\beta_1,u'_1),
\end{array}
$$
and, at any object $(a,f,a')$ of $F\!\downarrow\!F'$, their
respective constraints for the identity are
$$
(\widehat{F}_a,1_{1_{a'}}): 1_{\bar{F}(a,f,a')}\Rightarrow
\bar{F}1_{(a,f,a')} ,
\hspace{0.4cm}
(1_{1_a},\widehat{F}'_{a'}):\bar{F}'1_{(a,f,a')}\Rightarrow 1_{\bar{F}'(a,f,a')}.
$$

Although neither the category of bicategories and lax functors nor
the category of bicategories and oplax functors have pullbacks in
general, the following fact holds.
\begin{lemma}\label{pullbackLemma}
  $(i)$ The first square in $(\ref{pullbackSquares})$ is a pullback in
  the category of bicategories and lax functors.

  $(ii)$ The second square in $(\ref{pullbackSquares})$ is a pullback
  in the category of bicategories and oplax functors.
\end{lemma}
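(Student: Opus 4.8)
The plan is to verify the strict $1$-categorical universal property of each square directly, exploiting that the projections $P$ and $P'$ are genuine $2$-functors (strict), whereas $\bar F$ and $\bar F'$ carry the full lax, respectively oplax, structure of $F$ and $F'$. I will treat part $(i)$ in detail; part $(ii)$ is entirely dual, interchanging the roles of $\A$ and $\A'$, of first and last components, and of lax and oplax functors.

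First I would record that, at the level of underlying $0$-, $1$- and $2$-cells, $F\!\downarrow\!F'$ is literally the pullback in $\Set$ of the cells of $\A$ and of $\B\!\downarrow\!F'$ over those of $\B$: a $0$-cell $(a,f,a')$ corresponds to the pair $\big(a,(Fa,f,a')\big)$, a $1$-cell $(u,\beta,u')$ to $\big(u,(Fu,\beta,u')\big)$, and a $2$-cell $(\alpha,\alpha')$ to $\big(\alpha,(F\alpha,\alpha')\big)$; in each case the two entries agree after applying $F$ and $P$. Commutativity $F\circ P=P\circ\bar F$ is then immediate on cells, and I note that every piece of structure of $F\!\downarrow\!F'$ — horizontal and vertical composition, associativity and unit constraints — is computed componentwise from that of $\A$ and $\A'$.

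For the universal property, suppose given a bicategory $\C$ together with lax functors $G:\C\to\A$ and $H:\C\to\B\!\downarrow\!F'$ satisfying $F\circ G=P\circ H$. On cells, the dimensionwise pullback forces a unique candidate $K:\C\to F\!\downarrow\!F'$ with $PK=G$ and $\bar F K=H$: for a $0$-cell $c$, writing $Gc=a$ and $Hc=(b,g,a')$, the hypothesis gives $b=Fa$, so necessarily $Kc=(a,g,a')$, and likewise on $1$- and $2$-cells. Since a $2$-cell of $F\!\downarrow\!F'$ is a pair, I would then set the structure constraints to be $\widehat K_{c_2,c_1}=(\widehat G_{c_2,c_1},\,\widehat H^{\A'}_{c_2,c_1})$ and analogously for the units, where $\widehat H^{\A'}$ is the $\A'$-component of $\widehat H$ (equivalently, the structure constraint of $H$ composed with the strict second projection $\B\!\downarrow\!F'\to\A'$). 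These values are forced by $PK=G$, which is strict and selects the first component.

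The hard part will be checking that this forced definition is simultaneously compatible with $\bar F K=H$ and genuinely defines a lax functor. For the first, because $\bar F$ is lax with structure constraints of the form $(\widehat F_{u_2,u_1},1)$, the structure constraint of the composite $\bar F K$ has second component $\widehat H^{\A'}_{c_2,c_1}$ and first ($\B$-valued) component $F(\widehat G_{c_2,c_1})\cdot \widehat F_{Gc_2,Gc_1}$; the latter equals the $\B$-component of $\widehat H$ precisely by the equality of structure constraints recorded in $F\circ G=P\circ H$ (using that $P$ strict gives $\widehat{(PH)}=\widehat H^{\B}$ while $\widehat{(FG)}=F(\widehat G)\cdot\widehat F$), so $\bar F K=H$ holds. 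Finally, since every constraint of $F\!\downarrow\!F'$ is componentwise, the associativity and unit coherence axioms for $K$ split into those for $G$ in $\A$ and those for the $\A'$-component of $H$ in $\A'$, both of which already hold; hence $K$ is a lax functor and is the unique one with the required properties. I expect this structure-constraint bookkeeping — pinning down that the $\B$-valued discrepancy is exactly absorbed by $\widehat F$ — to be the only genuine obstacle, everything else being formal.
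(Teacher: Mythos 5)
Your proposal is correct and takes essentially the same route as the paper's own proof: both verify the strict universal property directly, observing that the mediating lax functor is forced on cells (its $\A$-part by the strict projection $P$, its remaining part by $\bar F$), that its structure constraints are the pairs $(\widehat G,\widehat H^{\A'})$, and that the $\B$-component of $\widehat H$ is necessarily $F(\widehat G)\cdot\widehat F$, which is exactly what makes $\bar F K=H$ hold. The paper's construction of $N$ from $L$ and $M$ is the same argument in different notation, and, like yours, it leaves the componentwise coherence verifications to the reader.
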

\begin{proof} $(i)$
Any pair of lax functors $L:\D \to \A$ and $M:\D \to
\B\!\downarrow\!F'$ such that $FL=PM$ determines a unique oplax
functor $N:\D\to F\!\downarrow\!F'$
$$
\xymatrix{
\D\ar@/^5pt/[rrd]^{M}\ar@/_5pt/[ddr]_{L}\ar@{.>}[rd]
                 \ar@{}@<-3pt>[rd]^{N}
\\&F\!\downarrow\!F'\ar[r]^{\bar{F}}\ar[d]^{P}
& \B\!\downarrow\!F'\ar[d]^{P}
\\ &\A\ar[r]^{F} & \B}
$$
such that $PN=L$ and $\bar{F}N=M$, which is defined as follows: The
lax functor $M$ carries any object $d\in\Ob\D$ to an object of
$\B\!\downarrow\!F$ which is necessarily written in the form
$M(d)=(FL(d),f(d),a'(d))$, with $a'(d)$ an object of $\A'$ and
 $f(d):FL(d)\to F'a'(d)$ a 1-cell in $\B$. Similarly, for any
1-cell $h:d_0\to d_1$ in $\D$, we have $M(h)=(FL(h),
\beta(h),u'(h))$ for some 1-cell $u'(h):a'(d_0)\to a'(d_1)$ in $\A'$
and some 2-cell
$$
\xymatrix{
FL(d_0)\ar[r]^{FL(h)}\ar[d]_{f(d_0)}
      \ar@{}@<36pt>[d]|(.57){\Rightarrow}|(.42){\beta(h)}
  & FL(d_1)\ar[d]^{f(d_1)}
\\ F'a'(d_0)\ar[r]_{F'\!u'(h)} & F'a'(d_1),}
$$
in $\B$, and for any 2-cell $\gamma:h_0\Rightarrow h_1$ in $\D$, we
have $M(\gamma)=(FL(\gamma),\alpha'(\gamma))$, for some 2-cell
$\alpha'(\gamma):u'(h_0)\Rightarrow u'(h_1)$ in $\A'$. Also, for any
object $d$ and any pair of composable 1-cells $h_1:d_0\to d_1$ and
$h_2:d_1\to d_2$ in $\D$, the attached structure 2-cells of $M$ can
be respectively written in a similar form as
$$\begin{array}{l}
\widehat{M}_d=
(F(\widehat{L}_d)\cdot \widehat{F}_{L(d)},\widehat{\alpha}'_d): 1_{M(d)}
\Rightarrow M(1_d) ,\\[4pt]
\widehat{M}_{h_2,h_1}=(F(\widehat{L}_{h_2,h_1})\cdot\widehat{F}_{L(h_2),L(h_1)},
\widehat{\alpha}'_{h_2,h_1}):
M(h_2)\circ M(h_1)\Rightarrow M(h_2\circ h_1),
\end{array}
$$
for some 2-cells $\widehat{\alpha}'_{h_2,h_1}$ and
$\widehat{\alpha}'_{d}$ in $\A'$. Then, the claimed $N:\D\to
F\!\downarrow\!F'$ is the lax functor which acts on cells by
$$
\xymatrix@C=2pc{
d_0\ar@/^10pt/[r]^{h}\ar@/_10pt/[r]_{\bar{h}}\ar@{}[r]|{\Downarrow\gamma}
&d_1} \ \overset{N}\mapsto\
\xymatrix@C=5pc{
(L(d_0),f(d_0),a'(d_0))\ar@/^14pt/[r]^{(L(h),\beta(h),u'(h))}
  \ar@/_14pt/[r]_{(L(\bar{h}),\beta(\bar{h}),u'(\bar{h}))}
\ar@{}[r]|{\Downarrow(L(\gamma),\alpha'(\gamma))}&
(L(d_1),f(d_1),a'(d_1))}
$$
and its respective structure 2-cells, for any object $d$  and any
pair of composable 1-cells $h_1:d_0\to d_1$ and $h_2:d_1\to d_2$ in
$\D$, are
$$\begin{array}{ll}
\widehat{N}_d=( \widehat{L}_{d},\widehat{\alpha}'_d): 1_{N(d)}
\Rightarrow N(1_d) ,&
\widehat{N}_{h_2,h_1}=(\widehat{L}_{h_2,h_1},\widehat{\alpha}'_{h_2,h_1}):
N(h_2)\circ N(h_1)\Rightarrow N(h_2\circ h_1).
\end{array}
$$

The proof of $(ii)$ is parallel to that given above for part $(i)$,
and it is left to the reader.
\end{proof}

\subsection{The homotopy-fibre bicategories.}
 For any 0-cell $b\in \B$, we also denote by $b:[0]\to
    \B$ the normal homomorphism such that $b(0)=b$, and whose structure isomorphism is $\bl:1_b\otimes 1_b\cong 1_b$. Then, we have the bicategories
\begin{itemize}
\item[-]  $F\!\downarrow\!b$, for any lax functor $F:\A\to \B$.

\vspace{0.2cm}
\item[-] $b\!\downarrow\!F'$, for any oplax functor
    $F':\A'\to \B$.

\vspace{0.2cm}
\item[-] $b\!\downarrow\!\B:=b\downarrow\!1_\B$, and
    $\B\!\downarrow\!b:=1_\B\!\downarrow\!b $.

    \end{itemize}

Given $F$ and $F'$ as above, any $1$-cell $p:b_0\to b_1$ in $\B$
determines $2$-functors
\begin{equation}\label{ps}
p_*:F\!\downarrow\! b_0\to F\!\downarrow\!b_1, \hspace{0.4cm}
 p^*:b_1\!\downarrow\!F'\to b_0\!\downarrow\!F',
\end{equation}
respectively given on cells by
$$
\xymatrix@C=2.3pc{
(a_0,f_0)\ar@/^11pt/[r]^{(u,\beta)}\ar@/_11pt/[r]_{(\bar{u},\bar{\beta})}
\ar@{}[r]|{\Downarrow \alpha}
&(a_1,f_1)}
\xymatrix@C=16pt{\ar@{|->}[r]^{p_*}&}
\xymatrix@C=2pc{(a_0,p\circ f_0)\ar@/^11pt/[r]^{(u, p\odot\beta)}
\ar@/_11pt/[r]_{(\bar{u},p\odot \bar{\beta})}
\ar@{}[r]|{\Downarrow \alpha}&
(a_1,p\circ f_1),}
$$
$$
\xymatrix@C=2.3pc{
(f_0,a'_0)\ar@/^11pt/[r]^{(\beta,u')}
         \ar@/_11pt/[r]_{(\bar{\beta},\bar{u}')}
\ar@{}[r]|{\Downarrow \alpha'}
&(f_1,a'_1)}
\xymatrix@C=16pt{\ar@{|->}[r]^{p^*}&}
\xymatrix@C=2pc{
(f_0\circ p,a'_0)\ar@/^11pt/[r]^{(\beta\odot p, u')}
\ar@/_11pt/[r]_{(\bar{\beta}\odot p,\bar{u}')}
\ar@{}[r]|{\Downarrow \alpha'}&
(f_1\circ p, a'_1),}
$$
where, for any $(u,\beta):(a_0,f_0)\to (a_1,f_1)$ in
$F\!\downarrow\!b_0$ and $(\beta,u'):(f_0,a'_0)\to (f_1,a'_1)$ in
$b_1\!\downarrow\!F'$, the 2-cells $p\odot\beta$ and $\beta \odot p$
are respectively obtained by pasting the diagrams $$\begin{array}{c}
\xymatrix{Fa_0 \ar@{}@<-33pt>[dd]|{\textstyle p\odot\beta:}
\ar@{}@<25pt>[d]|(.35){\beta}|(.5){\Rightarrow}
\ar[d]_{f_0}\ar[r]^{Fu}&Fa_1\ar[d]^{f_1}\\
b_0
\ar@{}@<25pt>[d]|(.5){\cong}
\ar[d]_{p}\ar[r]^{1}&b_0\ar[d]^{p}\\b_1\ar[r]^{1}&b_1
}
\hspace{1cm}
\xymatrix{b_0
\ar@{}@<-33pt>[dd]|{\textstyle \beta\odot p:}
\ar@{}@<25pt>[d]|(.5){\cong}
\ar[d]_{p}\ar[r]^{1}&b_0\ar[d]^{p}\\
b_1
\ar@{}@<25pt>[d]|(.35){\beta}|(.5){\Rightarrow}
\ar[d]_{f_0}\ar[r]^{1}&b_1\ar[d]^{f_1}\\F'\!a'_0\ar[r]^{F'\!u'}&F'\!a'_1
}
\end{array}
$$
that is,

\vspace{0.2cm}
 $p\odot\beta=\Big(1\circ
(p\circ f_0)\overset{\bl}\Longrightarrow p\circ f_0\overset{1\circ
\bl^{-1}}\Longrightarrow p\circ (1\circ f_0) \overset{1\circ
\beta}\Longrightarrow p\circ (f_1\circ
Fu)\overset{\aso^{-1}}\Longrightarrow (p\circ f_1)\circ Fu\Big),$

\vspace{0.2cm}

$\beta\odot p=\Big(F'u'\circ (f_0\circ
p)\overset{\aso^{-1}}\Longrightarrow (F'u'\circ f_0)\circ
p\overset{\beta\circ 1}\Longrightarrow (f_1\circ 1 )\circ p
\overset{\br\circ 1}\Longrightarrow f_1\circ
p\overset{\br^{-1}}\Longrightarrow (f_1\circ p)\circ 1\Big). $

\subsection{Classifying spaces of bicategories.}\label{grb}
Briefly, let us recall from ~\cite[Definition 3.1]{CCG2010} that the
{\em
  classifying space} $\BB\B$ of a (small) bicategory $\B$ is defined
  as the geometric realization of the {\em Grothendieck
nerve} or {\em pseudo-simplicial nerve} of the bicategory, that is,
the pseudo-functor from $\Delta^{\!op}$ to the 2-category $\Cat$ of
small categories
\begin{equation}\label{GrothendieckNerve}
\ner\B: \Delta^{\!op}\to \Cat,
\ \ [p]\mapsto  \bigsqcup_{{(b_{0},\ldots,b_p)}}\hspace{-0.3cm}
  \B(b_{p-1},b_{p})\times\B(b_{p-2},b_{p-1})\times\cdots\times\B(b_0,b_1),
\end{equation}
whose face and degeneracy functors are defined in the standard way
by using the horizontal composition and identity morphisms of the
bicategory, and the natural isomorphisms $d_id_j\cong d_{j-1}d_i$,
etc., being given from the associativity and unit constraints of the
bicategory (see Theorem \ref{groner} in the Appendix, for more
details). Thus,
$$\xymatrix{\BB\B=\BB\int_\Delta\!\ner\B}$$
is the classifying space of the category $\int_\Delta\!\ner\B$
obtained by the Grothendieck construction ~\cite{Grothendieck1971}
on the pseudofunctor $\ner\B$. In other words, $\BB\B=
|\ner\int_\Delta\!\ner\B|$ is the geometric realization of the
simplicial set nerve of the category $\int_\Delta\!\ner\B$. When
$\B$ is a 2-category, then $\BB\B$ is homotopy  equivalent to
Segal's classifying space ~\cite{Segal1968} of the topological
category obtained from $\B$ by replacing the hom-categories
$\B(x,y)$ by their classifying spaces $\BB\B(x,y)$, see
~\cite[Remark 3.2]{CCG2010}.

In ~\cite[\S 4]{CCG2010}, it is proven that the classifying space
construction, $\B\mapsto \BB\B$, is a functor $\BB:\mathbf{Hom}\to
\mathbf{Top}$, from the category of bicategories and homomorphisms
to the category $\mathbf{Top}$ of spaces (actually of CW-complexes).
In this paper, we need the extension of this fact stated in  part
$(i)$ of the lemma below.

\begin{lemma}\label{fact1} $(i)$ The assignment
$\B\mapsto \BB\B$ is the function on objects of two functors into
the category of spaces
$$\xymatrix{\mathbf{Lax}\ar[r]^{\BB}&\mathbf{Top}&\mathbf{opLax}\ar[l]_{\BB}},$$
where $\mathbf{Lax}$ (resp. $\mathbf{opLax}$) is the category of
bicategories with lax (resp. oplax) functors between them as
morphisms.

\vspace{0.2cm} $(ii)$ If $F,G:\B\to\C$ are two lax or oplax functors
between bicategories, then any lax or oplax transformation between
them $\alpha:F\Rightarrow G$ determines a homotopy, $\BB\alpha:\BB
F\Rightarrow \BB G:\BB\B\to \BB\C$, between the induced maps on
classifying spaces.
\end{lemma}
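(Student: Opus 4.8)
The plan is to obtain the maps $\BB F$ straight from the definition $\BB\B=\BB(\int_\Delta\ner\B)$. A lax functor does not induce a strict map of pseudo-simplicial nerves, but it does induce a \emph{lax transformation} of them, and the Grothendieck construction turns lax transformations into honest functors; composing with $\BB$ then gives $\BB F$ with the correct source and target on the nose.

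For $(i)$, given $F=(F,\widehat F):\B\to\C$ I would define $\ner F:\ner\B\Rightarrow\ner\C$, a transformation of pseudofunctors $\Delta^{op}\to\Cat$, whose component at $[p]$ is the functor $(g_p,\dots,g_1)\mapsto(Fg_p,\dots,Fg_1)$ (acting by $F$ on $2$-cells), and whose structure $2$-cell at each face operator is assembled from $\widehat F_{g_{i+1},g_i}:Fg_{i+1}\circ Fg_i\Rightarrow F(g_{i+1}\circ g_i)$ and $\widehat F_b:1_{Fb}\Rightarrow F1_b$. Their direction is exactly the one making $\ner F$ \emph{lax} natural (for oplax $F$ they reverse, making $\ner F$ oplax natural). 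I would then invoke the functoriality of the Grothendieck construction on lax transformations: a lax $\tau:S\Rightarrow T$ induces $\int_\Delta\tau:\int_\Delta S\to\int_\Delta T$, $([p],x)\mapsto([p],\tau_{[p]}x)$, with the evident action on morphisms through the cells $\tau_\phi$, respecting composition and identities. Setting $\BB F:=\BB(\int_\Delta\ner F)$ and using $\ner(GF)=\ner G\circ\ner F$ (which unwinds to the constraint formula for a composite lax functor) yields the functor $\BB:\mathbf{Lax}\to\Top$; since $\ner F$ is \emph{pseudo}natural when $F$ is a homomorphism, this restricts on $\mathbf{Hom}$ to the functor of ~\cite[\S4]{CCG2010}. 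The oplax case is dual, via the opposite Grothendieck construction (on which oplax transformations act by functors) together with the canonical homeomorphism identifying its classifying space with $\BB\B$, giving $\BB:\mathbf{opLax}\to\Top$.

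For $(ii)$ the key observation is that $\BB\alpha$ \emph{cannot} arise from a natural transformation between $\int_\Delta\ner F$ and $\int_\Delta\ner G$: the components $\alpha b:Fb\to Gb$ are $1$-cells of $\C$, hence become $1$-simplices (paths), and there is no nonidentity morphism in $\int_\Delta\ner\C$ from $([p],(Fg_\bullet))$ to $([p],(Gg_\bullet))$, so the interpolation must happen in the simplicial direction. My plan is to encode $\alpha$ as a single lax functor $H:\B\otimes[1]\to\C$ out of the Gray cylinder, for which a lax transformation $F\Rightarrow G$ is precisely such an $H$ with $Hi_0=F$ and $Hi_1=G$ along the inclusions $i_0,i_1:\B\to\B\otimes[1]$. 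Unlike the cartesian product, the Gray tensor carries the interchange $2$-cells that record the noninvertible naturality cells $\widehat\alpha_f$, which is exactly why these transformations correspond to functors out of $\B\otimes[1]$ rather than out of $\B\times[1]$. Granting a natural homotopy equivalence $\BB(\B\otimes[1])\simeq\BB\B\times I$ under which the inclusions induce the ends $\BB\B\times\{0,1\}$, part $(i)$ produces $\BB H$, and $\BB\B\times I\simeq\BB(\B\otimes[1])\xrightarrow{\BB H}\BB\C$ is the desired homotopy, restricting to $\BB F$ and $\BB G$ by functoriality. The oplax-functor and oplax-transformation cases are handled by the same recipe with the dual variances.

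The hard part, and the reason the argument is long, is the coherence bookkeeping. In $(i)$ one must check that $\ner F$ really satisfies the lax-transformation axioms, and since $\ner\B$ has nontrivial structure constraints built from $\aso,\bl,\br$, this amounts to matching those against the naturality and the pentagon/triangle coherence of $\widehat F$. In $(ii)$ the work is to establish the product formula $\BB(\B\otimes[1])\simeq\BB\B\times I$ with the correct end behaviour and to pin down the variance of the transformation-to-cylinder correspondence so that each of the four cases lands in the category treated by $(i)$. I expect to carry these out, after the Grothendieck construction, as explicit simplex-wise prisms whose faces are governed by $\widehat F$ and $\widehat\alpha$ --- precisely the technical verifications the authors defer to the appendix.
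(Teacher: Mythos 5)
Your part $(i)$ is essentially the paper's own argument. The paper (Theorem \ref{groner} and Corollary \ref{pfact1}) defines $\ner F$ exactly as you propose, as a lax simplicial transformation with components $(u_p,\dots,u_1)\mapsto (Fu_p,\dots,Fu_1)$ and constraints built from $\widehat{F}$, proves the strict equalities $\ner(F'F)=\ner F'\,\ner F$ and $\ner 1_\B=1_{\ner\B}$, and then applies the Grothendieck construction, which carries lax simplicial functors to honest functors. The only difference is that the paper routes all the coherence bookkeeping you defer through the adjunction $J_p\dashv R_p$ between graph morphisms $\mathcal{G}_p\to\B$ and lax functors $[p]\to\B$ (Lemma \ref{rufc}), which makes those verifications mechanical; your observation about why no natural transformation between $\int_\Delta\ner F$ and $\int_\Delta\ner G$ can exist is also correct.

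Part $(ii)$ is where there is a genuine gap. Your construction rests on two unestablished claims about the Gray cylinder: (a) that a lax transformation $\alpha:F\Rightarrow G$ between \emph{lax functors of bicategories} is the same thing as a lax functor $H:\B\otimes[1]\to\C$ with $Hi_0=F$, $Hi_1=G$; and (b) that $\BB(\B\otimes[1])\simeq \BB\B\times I$ compatibly with the ends. Claim (a) is Gray's correspondence only for $2$-functors between $2$-categories; for \emph{lax} functors out of a cylinder the correspondence is not a bijection (such an $H$ carries independent constraint data on all the formal composite $1$-cells of $\B\otimes[1]$), and even the single direction you need — manufacturing $H$ from $\alpha$ — first requires defining $\B\otimes[1]$ when $\B$ is a genuine bicategory (the naive Gray tensor product of bicategories is not available; strictifying $\B$ changes the domain, so $Hi_0$ would no longer equal $F$ on the nose) and then verifying the lax-functor coherence of $H$ on all mixed composites. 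That is a construction comparable in size to the appendix you are trying to replace. Claim (b) is worse: the natural proofs that the lax Gray cylinder has the homotopy type of the product proceed by lax-deformation-retracting $\B\otimes[1]$ onto one end, i.e.\ they invoke precisely the statement being proven, that (op)lax transformations induce homotopies on classifying spaces. Unless you supply an independent combinatorial proof of (b) — which you have not sketched beyond the phrase ``simplex-wise prisms'' — the argument is circular.

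For comparison, the paper never forms a cylinder at the bicategory level. It proves (Corollary \ref{pfact2}) that the equivalence $\kappa:|\Delta\B|\overset{\sim}\to\BB\B$ is \emph{homotopy natural} on lax functors — this is where the real work lies, via the lax simplicial functor $R:\underline{\Delta}\B\to\ner\B$, Quillen's adjoint-functor criterion, and Thomason's homotopy colimit theorem — and then transports the homotopy $H(\alpha):|\Delta F|\Rightarrow|\Delta G|$ already constructed in ~\cite[Proposition 7.1]{CCG2010} across $\kappa$ and a homotopy inverse $\kappa^{\bullet}$. At the geometric-nerve level the cylinder really is the simplicial product $\Delta\B\times\Delta[1]$, so no Gray tensor is needed. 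Indeed, the ``prisms governed by $\widehat{F}$ and $\widehat{\alpha}$'' you invoke at the end are exactly that nerve-level construction; if you carry them out, the Gray tensor detour does no work and should be removed from the proof.
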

\begin{proof} It is given in the Appendix, Corollaries
\ref{pfact1} and \ref{proffi}.\end{proof}

Other possibilities for defining $\BB\B$ come from  the {\em
geometric nerves} of the bicategory, first defined by Street
~\cite{Street1996} and studied, among others,  by Duskin
~\cite{Duskin2002}, Gurski ~\cite{Gurski2009} and Carrasco, Cegarra,
and Garz\'on ~\cite{CCG2010}; that is, the simplicial sets
\begin{equation}\label{eqgeoner}
\begin{array}{lll}
  \Delta^{\hspace{-2pt}\mathrm{u}}\B:\Delta^{\!op} \to \ \Set,
& &[p]\mapsto \mathrm{NorLax}([p],\B),\\[4pt]
  \Delta\B:\Delta^{\!op} \to \ \Set,
& &[p]\mapsto \mathrm{Lax}([p],\B),\\[4pt]
  \nabla_{\hspace{-2pt}{\mathrm{u}}}\B:\Delta^{\!op} \to \ \Set,
& &[p]\mapsto \mathrm{NorOpLax}([p],\B),\\[4pt]
  \nabla\B:\Delta^{\!op} \to \ \Set,
& &[p]\mapsto \mathrm{OpLax}([p],\B),
\end{array}
\end{equation}
whose respective $p$-simplices are the normal lax, lax, normal
oplax, and oplax functors from the category $[p]$ into the
bicategory $\B$.  In the Homotopy Invariance Theorem ~\cite[Theorem
6.1]{CCG2010} the existence of homotopy equivalences
\begin{equation}\label{4h}
  |\Delta^{\hspace{-2pt}\mathrm{u}}\B|\,\simeq \,|\Delta\B|\,\simeq\,
\BB\B\,\simeq \,|\nabla \B|\,\simeq\,
|\nabla_{\hspace{-2pt}{\mathrm{u}}}\B|,
\end{equation}
 it is proven, but their natural behaviour is not studied. Since, to establish the
results in this paper, we need to know that all the homotopy
equivalences above are homotopy natural, we state the following

\begin{lemma}\label{facts2}
For any bicategory $\B$, the first homotopy equivalence in
$(\ref{4h})$ is  natural on normal lax functors, the second one is
homotopy natural on lax functors, the third one is homotopy natural
on oplax functors, and the fourth one is natural on normal oplax
functors.
\end{lemma}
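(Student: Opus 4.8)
The plan is to realize each of the four geometric nerves $\Delta^{\mathrm{u}}\B$, $\Delta\B$, $\nabla_{\mathrm{u}}\B$, $\nabla\B$ as the simplicial nerve of an auxiliary category built functorially from $\B$, and then to express the comparison maps in $(\ref{4h})$ as induced by functors (or by natural comparisons between such functors) that are manifestly compatible with the appropriate class of morphisms. Concretely, for the passage $|\Delta\B|\simeq\BB\B$ one recalls from the proof of the Homotopy Invariance Theorem that the lax geometric nerve $\Delta\B$ is connected to the Grothendieck nerve $\ner\B$ by a simplicial map going through a bisimplicial object (a ``double nerve''), and that the equivalence $\BB\B=\BB\!\int_\Delta\!\ner\B$ arises from a homotopy colimit decomposition. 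I would reexamine that construction and observe that a lax functor $G:\B\to\C$ induces a map of the whole diagram of simplicial and bisimplicial sets involved. Since a lax functor $G$ sends a lax functor $[p]\to\B$ to a lax functor $[p]\to\C$ (lax functors compose), the map $\Delta G:\Delta\B\to\Delta\C$ is strictly simplicial; the only subtlety is that the comparison with $\BB\B$ is a \emph{zig-zag} through a space that is not itself the realization of a simplicial set on which $G$ acts simplicially, which is exactly why the naturality is only \emph{homotopy} naturality rather than strict.

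**Key steps.** First I would fix, once and for all, the precise chain of maps realizing each of the four equivalences in $(\ref{4h})$, reading them off from $\cite{CCG2010}$; the statement distinguishes which equivalences are strictly natural and which are only homotopy natural, so the proof must respect that distinction. Second, for the two \emph{strict} naturalities — the first ($|\Delta^{\mathrm{u}}\B|\simeq|\Delta\B|$, natural on normal lax functors) and the fourth ($|\nabla\B|\simeq|\nabla_{\mathrm{u}}\B|$, natural on normal oplax functors) — I would check that the comparison map is literally the realization of a simplicial map that commutes on the nose with $\Delta G$, $\Delta^{\mathrm{u}}G$ (resp. $\nabla G$, $\nabla_{\mathrm{u}}G$); here normality is essential because the equivalence between the normal and non-normal nerves is built from the unit constraints $\widehat{G}_b$, and only for a \emph{normal} $G$ are these identities, so the relevant square commutes strictly. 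Third, for the two \emph{homotopy} naturalities — the second ($|\Delta\B|\simeq\BB\B$ on lax functors) and the third ($\BB\B\simeq|\nabla\B|$ on oplax functors) — I would produce, for each lax (resp. oplax) $G:\B\to\C$, an explicit homotopy filling the square comparing the two routes around. The natural tool is Lemma~\ref{fact1}$(ii)$: the discrepancy between the two composites is witnessed by a lax (resp. oplax) transformation between the induced lax (resp. oplax) functors on the intermediate categorical models, and Lemma~\ref{fact1}$(ii)$ converts that transformation into the required homotopy. I would also use Lemma~\ref{fact1}$(i)$, which already guarantees that $\BB G:\BB\B\to\BB\C$ is well defined for lax and for oplax $G$, so that the statement ``$|\Delta\B|\simeq\BB\B$ is homotopy natural on lax functors'' is even meaningful.

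**The main obstacle** I anticipate is the middle two (homotopy-natural) cases, specifically exhibiting the connecting homotopy canonically and coherently. The equivalence $|\Delta\B|\simeq\BB\B$ is not induced by a single simplicial map but by a zig-zag through a realization of a bisimplicial set (and through the Grothendieck construction $\int_\Delta\!\ner\B$), so strict commutativity fails and one must track how a lax functor acts at each stage of the zig-zag and then splice the pieces into one homotopy. The delicate point is that the pseudo-simplicial structure of $\ner\B$ means $G$ interacts with the associativity and unit constraints, so the two composites agree only up to a specified $2$-cell; turning that $2$-cell into a genuine homotopy, and verifying it is compatible with composition of lax functors (so the assignment $G\mapsto$ homotopy is itself coherent), is where the real work lies. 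Because the paper flags these proofs as ``quite long and technical'' and relegates them to the appendix, I expect the honest route is to set up the bisimplicial machinery carefully and invoke the transformation-to-homotopy principle of Lemma~\ref{fact1}$(ii)$ at the one stage where strictness breaks, rather than attempting a direct hands-on homotopy; accordingly I would defer the full verification to the appendix and here only record the statement.
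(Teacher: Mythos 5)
Your treatment of the two strict naturalities is essentially the paper's: by \cite[Theorem 6.2]{CCG2010} the equivalence $|\Delta^{\mathrm{u}}\B|\simeq|\Delta\B|$ is the realization of the inclusion $\Delta^{\mathrm{u}}\B\hookrightarrow\Delta\B$, which commutes on the nose with the maps induced by a normal lax functor $G$ (the precise role of normality is that $\Delta^{\mathrm{u}}G$ exists at all, i.e.\ that composition with $G$ preserves normal simplices --- the inclusion itself does not involve the constraints $\widehat{G}_b$); dually for $\nabla$. The genuine gap is in your plan for the two homotopy-natural equivalences, and it is twofold. First, your key tool is unavailable: in the paper, Lemma \ref{fact1}$(ii)$ is \emph{deduced from} the statement you are trying to prove. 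Its proof (Corollary \ref{proffi}) transports the homotopy $H(\alpha):|\Delta F|\Rightarrow|\Delta F'|$ coming from \cite[Proposition 7.1(ii)]{CCG2010} along the homotopy-natural equivalence $\kappa:|\Delta\B|\to\BB\B$ of Corollary \ref{pfact2}, which is exactly the second assertion of Lemma \ref{facts2}. Invoking Lemma \ref{fact1}$(ii)$ here is therefore circular unless you supply an independent proof of it, which your proposal does not.

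Second, you misdiagnose where strict naturality breaks, and hence what the ``real work'' is. It is not that the pseudo-simplicial constraints of $\ner\B$ force squares to commute only up to a $2$-cell needing to be filled by a transformation-induced homotopy: by Theorem \ref{groner} the Grothendieck nerve is \emph{strictly} functorial on lax functors ($\ner F'\,\ner F=\ner(F'F)$), so $\BB$ is a strict functor on $\mathbf{Lax}$ (Corollary \ref{pfact1}). In the paper's proof of Corollary \ref{pfact2}, the equivalence $\kappa$ is assembled from the zig-zag $|\Delta\B|\xrightarrow{|i|}|\mathrm{diag}\,\mathrm{N}\underline{\Delta}\B|\xleftarrow{|\mu|}|\mathrm{hocolim}\,\mathrm{N}\underline{\Delta}\B|\xrightarrow{|\eta|}\BB\!\int_\Delta\!\underline{\Delta}\B\xrightarrow{\BB\int_\Delta R}\BB\B$, in which \emph{every} map is strictly natural on lax functors; for the last one this is the commutativity of $(\ref{frcomm})$, resting on the categorical geometric nerve $\underline{\Delta}\B$, $[p]\mapsto\brep([p],\B)$, and the lax simplicial restriction functor $R$ whose components $R_p$ admit left adjoints $J_p$ (whence, by Quillen's ``Corollary 1'' and Thomason's theorem, $\BB\!\int_\Delta\!R$ is a homotopy equivalence). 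The only reason $\kappa$ is merely \emph{homotopy} natural is that the Bousfield--Kan map $|\mu|$ must be inverted, and a homotopy inverse of a strictly natural homotopy equivalence is natural only up to homotopy. No transformation-to-homotopy conversion occurs anywhere; the substance of the proof is the construction of $\underline{\Delta}\B$, the adjunctions $J_p\dashv R_p$, and the strict-naturality verifications --- precisely the part your proposal defers rather than supplies.
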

\begin{proof} By ~\cite[Theorem 6.2]{CCG2010}, the homotopy equivalence
 $|\Delta^{\hspace{-2pt}\mathrm{u}}\B|\,\simeq \,|\Delta\B|$ is
 induced on geometric realizations by the inclusion map
 $\Delta^{\hspace{-2pt}\mathrm{u}}\B\hookrightarrow \Delta\B$.
 Therefore, it is clearly natural on normal lax functors between
 bicategories. Similarly, the homotopy equivalence
 $|\nabla_{\hspace{-2pt}{\mathrm{u}}}\B|\,\simeq\,|\nabla \B|$ is natural on normal oplax
functors. The proof for the other two is more complicated and  is
given in the Appendix, Corollary \ref{pfact2}.\end{proof}

\section{Inducing homotopy pullbacks on classifying spaces}
\label{mainSection}

Quillen's Theorem B ~\cite{Quillen1973} provides a sufficient
condition on a functor between small categories $F:\A\to \B$ for the
classifying space $\BB(F\!\downarrow\!b)$  to be a homotopy-fibre
over the 0-cell $|b|\in\BB \B$ of the induced map $\BB F:\BB \A
\to\BB \B$, for each object $b\in\Ob \B$. The condition is that the
maps $\BB p_*:\BB (F\!\downarrow \!b) \to \BB (F\!\downarrow \!b')$
are homotopy equivalences for every morphism $p:b\to b'$ in the
category $\B$. That condition was referred to by Dwyer, Kan, and
Smith in ~\cite[\S 6]{DKS1989} by saying that ``the functor $F$ has
the property B" (see also Barwick, and Kan in
~\cite{BK2011,BK2013}). To state our theorem below, we shall adapt
that terminology to the bicategorical setting, and we will say that
\begin{itemize}
\item[(B$_l$)] {\em a lax functor} between bicategories $F:\A\to \B$
  {\em has the property} B$_l$ if, for any $1$-cell $p:b_0\to
  b_1$ in $\B$, the $2$-functor $p_*:F\!\downarrow\! b_0 \to
  F\!\downarrow\! b_1$ in $(\ref{ps})$ induces a homotopy
  equivalence on classifying spaces, $\BB(F\!\downarrow\! b_0)
  \simeq \BB(F\!\downarrow\! b_1)$.

\item[(B$_o$)] {\em an oplax functor} between bicategories $F':\A'\to
  \B$ {\em has the property} $\mathrm{B}_o$ if, for any $1$-cell
  $p:b_0\to b_1$ in $\B$, the $2$-functor
  $p^*:b_1\!\downarrow\!F' \to b_0\!\downarrow\! F'$ in
  $(\ref{ps})$ induces a homotopy equivalence on classifying
  spaces, $\BB(b_1\!\downarrow\! F') \simeq
  \BB(b_0\!\downarrow\! F')$.
\end{itemize}

The main result in this paper can be summarized as follows:
\begin{theorem}\label{mainTheorem} Let $\xymatrix@C=18pt{\A\ar[r]^-{F} & \B &\A'\ar[l]_(.4){F'}}$
  be a diagram of bicategories, where $F$ is a lax functor and $F'$ is
  an oplax functor (for instance, if $F$ and $F'$ are any two homomorphisms).

  $(i)$ There is a homotopy $\BB F\,\BB P\Rightarrow \BB F'\,\BB
P'$, so that the square below, which is induced by
  $(\ref{pqsquare})$ on classifying spaces, is homotopy commutative.\begin{equation}\label{hpqsquare}\begin{array}{c}
\xymatrix{
\BB(F\!\downarrow\!F')\ar@{}@<30pt>[d]|(.45){\Rightarrow}\ar[r]^-{\BB
P'}\ar[d]_{\BB P}
&\BB \A'\ar[d]^{\BB F'}\\
\BB \A\ar[r]^{\BB F}&\BB\B}
\end{array}
\end{equation}

\vspace{0.2cm} $(ii)$ Suppose that $F$ has the property
$\mathrm{B}_l$ or $F'$ has the
  property $\mathrm{ B}_o$. Then,  the square $(\ref{hpqsquare})$
is a homotopy pullback.

Therefore, by Dyer and Roitberg ~\cite{DR1980}, for each $a\in
\Ob\A$ and $a'\in\Ob\A'$ such that $Fa=F'\!a'$ there is an induced
Mayer-Vietoris type long exact sequence on homotopy groups based at
the $0$-cells $\BB a$ of $\BB\A$, $\BB Fa$ of $\BB \B$, $\BB a'$ of
$\BB\A'$, and $\BB(a,1,a')$ of $\BB(F\!\downarrow\!F')$,
$$
\xymatrix@C=12pt{
\cdots\to \pi_{n+1}\BB \B\ar[r]
&\pi_n\BB(F\!\downarrow\!F')\ar[r]&
\pi_n\BB\A\times\pi_n\BB\A'
\ar[r]&\pi_n\BB \B\to \cdots}
$$
$$
\xymatrix@C=12pt{
\cdots\to \pi_1\BB(F\!\downarrow\!F')\ar[r]
&\pi_1\BB\A\times\pi_1\BB\A'
\ar[r]&\pi_1\BB \B\ar[r]
&\pi_0\BB(F\!\downarrow\!F')\ar[r]&\pi_0(\BB\A\times\BB\A').}
$$

$(iii)$ If the square $(\ref{hpqsquare})$ is a homotopy pullback for
every $F'=b:[0]\to \B$, $b\in \Ob\B$, then $F$ has the property
$\mathrm{B}_l$. Similarly, if  the square $(\ref{hpqsquare})$ is a
homotopy pullback for any $F=b:[0]\to \B$, $b\in \Ob\B$, then $F'$
has the property $\mathrm{B}_o$.
\end{theorem}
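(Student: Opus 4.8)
The plan is to handle the three parts in turn, using the two pullback squares of $(\ref{pullbackSquares})$, the geometric nerves, and Lemma \ref{simlem} as the main engine, with part $(ii)$ reduced to a single application of that lemma. For part $(i)$ I would produce the homotopy from a tautological transformation rather than building it by hand. On $\B\!\downarrow\!F'=1_\B\!\downarrow\!F'$, whose objects are triples $(b,g,a')$ with $g\colon b\to F'a'$, the assignment $b\mapsto g$ on objects, together with the structure $2$-cells $\beta$ of the $1$-cells $(w,\beta,u')$, constitutes an (op)lax transformation $P\Rightarrow F'P'$ between the two oplax functors $P\colon\B\!\downarrow\!F'\to\B$ and $F'P'\colon\B\!\downarrow\!F'\to\B$ (here $P$ is a $2$-functor, hence oplax, and $F'$ is oplax). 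By Lemma \ref{fact1}$(ii)$ this yields a homotopy $\BB P\Rightarrow\BB F'\,\BB P'$. Precomposing with the lax functor $\bar F\colon F\!\downarrow\!F'\to\B\!\downarrow\!F'$ of the first square in $(\ref{pullbackSquares})$, and using $P\bar F=FP$, $P'\bar F=P'$, together with functoriality of $\BB$ on lax functors (Lemma \ref{fact1}$(i)$), gives the desired homotopy $\BB F\,\BB P\Rightarrow\BB F'\,\BB P'$, so that $(\ref{hpqsquare})$ commutes up to homotopy.

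For part $(ii)$ it suffices, by a duality between the two squares of $(\ref{pullbackSquares})$, to treat the case where $F$ has property $\mathrm{B}_l$; the case where $F'$ has $\mathrm{B}_o$ follows by the symmetric argument using the first square, the lax nerve $\Delta$, and the $2$-functors $p^*$ in place of $p_*$. The key observation is that, by the universal property of Lemma \ref{pullbackLemma}$(ii)$, the oplax geometric nerve $\nabla$ carries the second (oplax) square of $(\ref{pullbackSquares})$ to a genuine pullback of simplicial sets, identifying $\nabla(F\!\downarrow\!F')$ with $\nabla\A'\times_{\nabla\B}\nabla(F\!\downarrow\!\B)$. I would then apply Lemma \ref{simlem} to $g=\nabla P'\colon\nabla(F\!\downarrow\!\B)\to\nabla\B$, pulled back along $\nabla F'\colon\nabla\A'\to\nabla\B$: if $g$ satisfies condition $(i)$ of that lemma, then $(iii)$ makes the realization of this pullback a homotopy pullback. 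Finally I would translate back using the homotopy equivalences $\BB\C\simeq|\nabla\C|$, homotopy natural on oplax functors (Lemma \ref{facts2}): this identifies $|\nabla\A'|\simeq\BB\A'$ and $|\nabla\B|\simeq\BB\B$ with bottom map $\simeq\BB F'$; it identifies $|\nabla(F\!\downarrow\!\B)|\simeq\BB(F\!\downarrow\!\B)\simeq\BB\A$, the last equivalence being $\BB P$ for $P\colon F\!\downarrow\!\B\to\A$ (a homotopy equivalence because $a\mapsto(a,1_{Fa},Fa)$ is a pseudo-inverse, via Lemma \ref{fact1}$(ii)$); and, using the tautological transformation $FP\Rightarrow P'$ once more, it identifies $|g|\simeq\BB P'\simeq\BB F$. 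The resulting homotopy pullback is the transpose of $(\ref{hpqsquare})$, whence $(\ref{hpqsquare})$ itself is one, and the Mayer--Vietoris sequence follows from Dyer and Roitberg.

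The main obstacle is precisely the verification that $g=\nabla P'$ satisfies condition $(i)$ of Lemma \ref{simlem}, i.e.\ that for every simplex $x\colon\Delta[n]\to\nabla\B$ and every $\sigma\colon\Delta[m]\to\Delta[n]$ the map $|g^{-1}(x\sigma)|\to|g^{-1}(x)|$ is a homotopy equivalence. This is where property $\mathrm{B}_l$ enters, and it needs a careful analysis of the oplax geometric nerve of the comma bicategory: over a vertex $b$ the fibre is exactly $\nabla(F\!\downarrow\!b)$, and I expect that over a general simplex $x$ with vertices $b_0,\dots,b_n$ the fibre deformation-retracts onto $\nabla(F\!\downarrow\!b_0)$ so that the transition maps induced by the $\sigma$ are, up to homotopy, the $2$-functors $p_*$ of $(\ref{ps})$ attached to the $1$-cells of $x$. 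Granting this, property $\mathrm{B}_l$ says exactly that all these transition maps are homotopy equivalences, which is condition $(i)$. I anticipate this identification to be the technical heart of the argument and the part most likely relegated to the appendix.

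For part $(iii)$ I would argue conversely. Assuming $(\ref{hpqsquare})$ is a homotopy pullback for every $F'=b\colon[0]\to\B$ means precisely that each fibre square of $g$ over a vertex $b$ realizes to a homotopy pullback, i.e.\ condition $(ii)$ of Lemma \ref{simlem} at the $0$-simplices, giving $\BB(F\!\downarrow\!b)\simeq\mathrm{Fib}(\BB F,\BB b)$ for all $b$. Fix a $1$-cell $p\colon b_0\to b_1$ in $\B$; its realization is a path in $\BB\B$ from $\BB b_0$ to $\BB b_1$, and transport along it gives a homotopy equivalence $\mathrm{Fib}(\BB F,\BB b_0)\to\mathrm{Fib}(\BB F,\BB b_1)$. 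It then remains to check that, under the two identifications above, this transport is homotopic to $\BB p_*$; this I would settle by tracing the definition of $p_*$ in $(\ref{ps})$, whose effect is to postcompose the structural $1$-cell $f$ with $p$ and thereby concatenate the defining path of the homotopy fibre with $\BB p$. Since transport is an equivalence, so is $\BB p_*$, and $F$ has property $\mathrm{B}_l$; the dual argument over $F=b$ gives property $\mathrm{B}_o$ for $F'$. The one delicate point is the compatibility of $p_*$ with the whisker maps into the homotopy fibres, which again reduces to the explicit description of those maps.
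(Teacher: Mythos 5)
Parts (i) and (ii) of your proposal follow the paper's own route. Your homotopy in (i) is exactly the paper's: the lax transformation $\omega\colon P\Rightarrow F'P'$ on $\B\!\downarrow\!F'$, turned into a homotopy by Lemma \ref{fact1} and whiskered with $\bar F$ using $P\bar F=FP$ and $P'\bar F=P'$. Your (ii) is the paper's Lemma \ref{lemma2}: Lemma \ref{pullbackLemma} makes the geometric nerve of the relevant square of $(\ref{pullbackSquares})$ a simplicial pullback, property $\mathrm{B}_l$ (resp.\ $\mathrm{B}_o$) is used to verify condition $(i)$ of Lemma \ref{simlem} for the projection, and Lemma \ref{facts2} transfers the resulting homotopy pullback back to classifying spaces. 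Two points there, however. First, a directional slip: the fibre $\nabla(F\!\downarrow\!\x')$ of $\nabla P'$ over $\x'\colon[n]\to\B$ is homotopy equivalent to the fibre over the \emph{last} vertex $\x' n$ (Lemma \ref{lemma1}$(ii)$, $n$ being terminal in $[n]$), not the first; with first vertices the transition map for $\sigma$ would have to run against the $1$-cell $\x'(0,\sigma 0)\colon\x'0\to\x'\sigma 0$, and no such $2$-functor exists. (First vertices, and the $p^\ast$, are correct in the dual case of $\Delta P$ and the fibres $\x\!\downarrow\!F'$.) Second, your claim that $\BB P\colon\BB(F\!\downarrow\!\B)\to\BB\A$ is an equivalence because $a\mapsto(a,1_{Fa},Fa)$ is a pseudo-inverse deviates from the paper: that assignment does extend to a lax section $S$ with $PS=1_\A$, but the comparison $SP\Rightarrow 1$ has as naturality components the structure $2$-cells $\beta$ of the $1$-cells of $F\!\downarrow\!\B$, which are not invertible; it is only a lax transformation, and its coherence axioms remain to be checked. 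If carried out, this would be a legitimate (indeed simplifying) alternative to the paper's Corollary \ref{bacor}, which instead deduces this equivalence from Lemma \ref{lemma2}$(i)$ applied to $F'=1_\B$ (whose property $\mathrm{B}_o$ comes from Lemma \ref{cont}) plus the homotopy-fibre characterization.

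The genuine gap is in part (iii). Your argument hinges on the assertion that, under the whisker identifications $\BB(F\!\downarrow\!b_i)\simeq\mathrm{Fib}(\BB F,\BB b_i)$, the map $\BB p_\ast$ agrees up to homotopy with fibre transport along the path $\BB p$. This is not settled by ``tracing the definition of $p_\ast$'': one must construct a coherent homotopy between the structure homotopy of the hypothesis square for $b_0$ concatenated with $\BB p$, and the structure homotopy of the square for $b_1$ precomposed with $\BB p_\ast$, a path-level coherence of the same order of difficulty as anything else in the proof, and your outline contains no construction of it. The paper avoids this entirely by a formal argument you could adopt: since $Pp_\ast=P$ and $\bar Fp_\ast=p_\ast\bar F$, the hypothesis squares for $b_0$ and $b_1$, decomposed through $(\ref{hbarsquarel})$ as in the left diagram of $(\ref{2dia})$, show by the two-out-of-three property that the commutative square with horizontals $\BB\bar F$ and verticals $\BB p_\ast\colon\BB(F\!\downarrow\!b_0)\to\BB(F\!\downarrow\!b_1)$ and $\BB p_\ast\colon\BB(\B\!\downarrow\!b_0)\to\BB(\B\!\downarrow\!b_1)$ is a homotopy pullback; as $\BB(\B\!\downarrow\!b_0)\simeq\mathrm{pt}\simeq\BB(\B\!\downarrow\!b_1)$ by Lemma \ref{cont}, the right vertical is an equivalence, hence so is the left one, giving $\mathrm{B}_l$ (and dually $\mathrm{B}_o$). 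You should either supply the missing transport coherence or replace that step by this formal argument.
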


The remainder of this section is devoted to the proof of this
theorem. We shall start by recalling from ~\cite[Lemma 5.2]{CCH2013}
the following lemma.

\begin{lemma}\label{cont}
  For any object $b$ of a bicategory $\B$, the classifying spaces of
  the comma bicategories $\B\!\!\downarrow{\!b}$ and
  $b\!\!\downarrow{\!\B}$ are contractible, that is,
$\BB(\B\!\!\downarrow{\!b})\simeq \mathrm{pt}\simeq
\BB(b\!\!\downarrow{\!\B})$.\end{lemma}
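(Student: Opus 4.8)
The plan is to carry out the argument in detail for $\B\!\downarrow\!b$ and to obtain $b\!\downarrow\!\B$ by the evident dual construction. Unwinding $(\ref{ff'})$ with $F=1_\B$ and $F'=b\colon[0]\to\B$, an object of $\B\!\downarrow\!b$ is a pair $(c,f)$ with $f\colon c\to b$ a $1$-cell of $\B$; a $1$-cell $(v,\gamma)\colon(c_0,f_0)\to(c_1,f_1)$ consists of $v\colon c_0\to c_1$ together with a $2$-cell $\gamma\colon 1_b\circ f_0\Rightarrow f_1\circ v$ in $\B$; and a $2$-cell is a $2$-cell $\alpha\colon v\Rightarrow\bar v$ subject to the compatibility condition of \S\ref{HPB}. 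The object $t=(b,1_b)$ plays the role of a terminal object. The key observation is that the constant homomorphism $\Delta_t\colon \B\!\downarrow\!b\to\B\!\downarrow\!b$ factors as $\B\!\downarrow\!b\to[0]\xrightarrow{t}\B\!\downarrow\!b$ through the terminal bicategory, so that, by Lemma \ref{fact1}$(i)$, the induced map $\BB\Delta_t$ factors through the one-point space $\BB[0]=\mathrm{pt}$ and is therefore constant. It thus suffices to produce a lax (or oplax) transformation between $1_{\B\downarrow b}$ and $\Delta_t$ and to invoke Lemma \ref{fact1}$(ii)$.

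First I would construct a lax transformation $\theta\colon 1_{\B\downarrow b}\Rightarrow\Delta_t$. Its component at an object $(c,f)$ is the $1$-cell $\theta_{(c,f)}=(f,1_{1_b\circ f})\colon(c,f)\to t$, obtained by taking $f$ itself as the underlying $1$-cell of $\B$. For a $1$-cell $(v,\gamma)\colon(c_0,f_0)\to(c_1,f_1)$, the required naturality $2$-cell $\widehat\theta_{(v,\gamma)}\colon \Delta_t(v,\gamma)\circ\theta_{(c_0,f_0)}\Rightarrow\theta_{(c_1,f_1)}\circ(v,\gamma)$ projects, under the first-component projection $P$, to a $2$-cell between the underlying $1$-cells $1_b\circ f_0$ and $f_1\circ v$ (matched up to the canonical constraints entering the horizontal composition $\circledcirc$ of $\B\!\downarrow\!b$), and I would take it to be $\gamma$ itself. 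Note that the direction $1_b\circ f_0\Rightarrow f_1\circ v$ is exactly the one prescribed by the lax convention of \S\ref{Preliminaires} (for an oplax transformation it would be reversed), which is precisely what makes this choice available without inverting $\gamma$.

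The main obstacle is the verification that $\theta$ is a genuine lax transformation, namely that these data satisfy the coherence axioms: the axiom for horizontal composites, the axiom for identities, and compatibility with $2$-cells. This is a routine but lengthy pasting computation which reduces, after expanding, to the naturality of $\aso$, $\bl$, $\br$ together with the defining formulas for $\beta_2\circledcirc\beta_1$ and $\overset{_\circ}{1}_{(a,f,a')}$ recorded in \S\ref{HPB}; no input about $\B$ beyond its pentagon and triangle coherence is required. The compatibility of $\gamma$ with a $2$-cell $\alpha\colon(v,\gamma)\Rightarrow(\bar v,\bar\gamma)$ is immediate from the $2$-cell condition $\bar\gamma=(1_{f_1}\circ\alpha)\cdot\gamma$ defining morphisms of $\B\!\downarrow\!b$.

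Granting this, Lemma \ref{fact1}$(ii)$ yields a homotopy $\BB\theta\colon \BB(1_{\B\downarrow b})\Rightarrow\BB\Delta_t$, that is, a homotopy between the identity map of $\BB(\B\!\downarrow\!b)$ and the constant map $\BB\Delta_t$; hence $\BB(\B\!\downarrow\!b)$ is contractible. For $b\!\downarrow\!\B$ one argues dually: here an object is a $1$-cell $f\colon b\to c$, the object $s$ with structure $1$-cell $1_b$ is now initial, and one builds a transformation $\Delta_s\Rightarrow 1_{b\downarrow\B}$ whose component at $f$ has underlying $1$-cell $f$ and whose structure $2$-cell is supplied by the corresponding datum, concluding $\BB(b\!\downarrow\!\B)\simeq\mathrm{pt}$ in the same manner.
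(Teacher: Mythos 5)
Your proof is correct in substance, but note first that the paper does not actually prove this lemma: it is recalled verbatim from ~\cite[Lemma 5.2]{CCH2013}, so your argument supplies a self-contained proof that the paper delegates to a reference. The route you take is the natural one (and the one the cited source follows in spirit): exhibit $t=(b,1_b)$ as lax-terminal via a lax transformation $\theta\colon 1_{\B\downarrow b}\Rightarrow\Delta_t$, then apply Lemma \ref{fact1}. Your decisive observation is the variance check, and it is right: with the paper's convention for lax transformations, the naturality cell at $(v,\gamma)$ must project under $P$ to a $2$-cell $1_b\circ f_0\Rightarrow f_1\circ v$, which is exactly the datum $\gamma$ carried by a $1$-cell of $\B\!\downarrow\!b$, so nothing needs inverting. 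The verifications you defer do close up, for structural reasons rather than by luck: since $1_\B$ is strict and $b\colon[0]\to\B$ is normal with structure cell $\bl_{1_b}$, one computes $\overset{_\circ}{1}_{t}=\br_{1_b}^{-1}\cdot\bl_{1_b}=\mathrm{id}$ (using $\bl_{1_b}=\br_{1_b}$), the expansion of $\gamma_2\circledcirc\gamma_1$ equals $\aso\cdot(\gamma_2\circ 1)\cdot\aso^{-1}\cdot(1\circ\gamma_1)\cdot\aso\cdot(\bl_{1_b}^{-1}\circ 1)$, and in the composition axiom for $\theta$ the trailing factor $\aso\cdot(\bl_{1_b}^{-1}\circ 1)$ cancels against the constraint $\bl_{1_b}$ of $\Delta_t$ and its associator, leaving the two sides equal; the $2$-cell axiom is, as you say, literally the defining condition $\bar\gamma=(1_{f_1}\circ\alpha)\cdot\gamma$, and the unit axiom reduces to $\overset{_\circ}{1}_{(c,f)}=\br_f^{-1}\cdot\bl_f$.

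One assertion is false, though harmlessly so: with the paper's definition $\BB\B=\BB\!\int_\Delta\!\ner\B$, the space $\BB[0]$ is \emph{not} a one-point space. Indeed $\ner[0]$ is the constant pseudofunctor at the terminal category, so $\int_\Delta\ner[0]$ is isomorphic to $\Delta^{\!op}$, and $\BB[0]$ is an infinite-dimensional (though contractible) CW-complex. Consequently $\BB\Delta_t$ is not a constant map, merely null-homotopic, because it factors through a contractible space. Your conclusion survives unchanged: the identity of $\BB(\B\!\downarrow\!b)$ is homotopic to a null-homotopic map, hence $\BB(\B\!\downarrow\!b)\simeq\mathrm{pt}$; and the dual argument for $b\!\downarrow\!\B$, with the lax transformation $\Delta_s\Rightarrow 1_{b\downarrow\B}$ whose naturality cells are the $\beta$'s, has the correct variance and goes through the same way.
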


We also need the  auxiliary result below. To state it, we  use that,
for any given diagram $F:\A\to\B \leftarrow\A':F'$, with $F$ a lax
functor and $F'$  an oplax functor, and  for each objects $a$ of
$\A$ and $a'$ of $\A'$, there are normal homomorphisms
\begin{equation}\label{JF}
\xymatrix{
Fa\!\downarrow\!F'\ar[r]^{J}
& F\!\downarrow \!F'&F\!\downarrow\! F'\!a'\ar[l]_{J'},}
\end{equation}
where $J$ acts  on cells  by 
$$
\xymatrix@C=2.3pc{
(f_0,a'_0)\ar@/^11pt/[r]^{(\beta,u')}\ar@/_11pt/[r]_{(\bar{\beta},\bar{u}')}
\ar@{}[r]|{\Downarrow \alpha'}
&(f_1,a'_1)}
\xymatrix@C=16pt{\ar@{|->}[r]^{J}&}
\xymatrix@C=4.5pc{(a,f_0,a'_0)\ar@/^12pt/[r]^{(1_a,\imath(\beta, u'),u')}
\ar@/_12pt/[r]_{(1_a,\imath(\bar{\beta},\bar{u}'),\bar{u}')}
\ar@{}[r]|{\Downarrow (1,\alpha')}&
(a,f_1, a'_1),}
$$
where, for any 1-cell $(\beta,u'):(f_0,a'_0)\to (f_1,a'_1)$ in
$Fa\!\downarrow\!F'$, the 2-cell $\imath(\beta,u')$ is  defined as the composite 
$$
\begin{array}{ll}
\imath(\beta,u')=\big(F'u'\circ f_0\overset{\beta}\Longrightarrow
f_1\circ 1_{Fa}\overset{1\circ \widehat{F}_a}\Longrightarrow
f_1\circ F1_a\big),
\end{array}
$$
and whose  constraints, at pairs of 1-cells
$\xymatrix@C=20pt{
(f_0,a'_0)\ar[r]^{(\beta_1,u'_1)}&(f_1,a'_1)\ar[r]^{(\beta_2,u'_2)}
& (f_2,a'_2)}$ in $Fa\!\downarrow\!F'$, are the 2-cells of
$F\!\downarrow\!F'$
$$\begin{array}{l}
(\bl_{1_{a}},1_{u'_2\circ u'_1}):
(1_a\circ 1_a, \imath(\beta_2,u'_2)
\circledcirc\imath(\beta_1,u'_1),u'_2\circ u'_1)\cong
(1_a,\imath(\beta_2\circledcirc\beta_1,u'_2\circ u'_1),u'_2\circ u'_1).
\end{array}
$$

Similarly, $J'$ acts by
$$
\xymatrix@C=2.3pc{
(a_0,f_0)\ar@/^11pt/[r]^{(u,\beta)}\ar@/_11pt/[r]_{(\bar{u},\bar{\beta})}
\ar@{}[r]|{\Downarrow \alpha}
&(a_1,f_1)}
\xymatrix@C=16pt{\ar@{|->}[r]^{J'}&}
\xymatrix@C=4.5pc{
(a_0,f_0,a')\ar@/^12pt/[r]^{(u, \imath'\!(u,\beta),1_{a'})}
\ar@/_11pt/[r]_{(\bar{u}, \imath'\!(\bar{u},\bar{\beta}),1_{a'})}
\ar@{}[r]|{\Downarrow (\alpha,1)}&
(a_1,f_1,a'),}
$$
where, for any 1-cell  $(u,\beta):(a_0,f_0)\to (a_1,f_1)$ in
$F\!\downarrow\! F'\!a'$, the 2-cell 
$\imath'\!(u,\beta)$ is defined as the composites
$$
\begin{array}{ll}
\imath'\!(u,\beta)=
\big(F'1_{a'}\circ f_0\overset{\widehat{F}'\circ 1}\Longrightarrow
1_{F'\!a'}\circ f_0\overset{\beta}\Longrightarrow f_1\circ Fu\big),
\end{array}
$$
and whose constraints, at pairs of 1-cells
$\xymatrix@C=20pt{
(a_0,f_0)\ar[r]^{(u_1,\beta_1)}&(a_1,f_1)\ar[r]^{(u_2,\beta_2)} &
(a_2,f_2)}$ in $F\!\downarrow\!F'\!a'$, are the 2-cells of
$F\!\downarrow\!F'$
$$\begin{array}{l}
(1_{u_2\circ u_1},\bl_{1_{a'}}):
(u_2\circ u_1,\imath'\!(u_2,\beta_2)
\circledcirc\imath'\!(u_1,\beta_1),1_{a'}\circ 1_{a'})\cong
(u_2\circ u_1,\imath'\!(u_2\circ u_1,\beta_2\circledcirc\beta_1),
1_{a'}).
\end{array}
$$

\begin{lemma}\label{lemma1}  Let
$\xymatrix@C=18pt{\A\ar[r]^-{F} & \B &\A'\ar[l]_(.4){F'}}$ be any
diagram of bicategories, where $F$ is a lax functor and $F'$ is an
oplax functor.

$(i)$ If $\A$ is a category with an initial object {\em{\scriptsize
    0}}, then the homomorphism $J$ in $(\ref{JF})$ induces a homotopy
equivalence on classifying spaces, $\BB(F\text{\em{\scriptsize
    0}}\!\downarrow\!F')\simeq \BB(F\!\downarrow\!F')$.

$(ii)$  If $\A'$ is a  category with a terminal object
{\em{\scriptsize 1}}, then the homomorphism $J'$ in $(\ref{JF})$
induces a homotopy equivalence on classifying spaces,
$\BB(F\!\downarrow\!F'\!\text{\em{\scriptsize 1}})\simeq
\BB(F\!\downarrow\!F')$.

\end{lemma}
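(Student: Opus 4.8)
The plan is to exhibit an explicit homotopy inverse to $\BB J$, reducing everything to Lemma \ref{fact1}. I concentrate on part $(i)$; part $(ii)$ follows by a dual argument in which the roles of the lax functor $F$ (and its source $\A$, with initial object $0$) and the oplax functor $F'$ (and its source $\A'$, with terminal object $1$) are interchanged, and the canonical arrows $0\to a$ out of the initial object are replaced throughout by the canonical arrows $a'\to 1$ into the terminal object.

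\emph{Construction of the candidate inverse.} Writing $!_a\colon 0\to a$ for the unique morphism from the initial object, I would define a lax functor $R\colon F\!\downarrow\!F'\to F0\!\downarrow\!F'$ by precomposing the middle coordinate with $F!_a$: on $0$-cells $R(a,f,a')=(f\circ F!_a,\,a')$, and on a $1$-cell $(u,\beta,u')\colon(a_0,f_0,a'_0)\to(a_1,f_1,a'_1)$ it returns the $1$-cell of $F0\!\downarrow\!F'$ with $\A'$-component $u'$ whose structure $2$-cell $F'u'\circ(f_0\circ F!_{a_0})\Rightarrow (f_1\circ F!_{a_1})\circ 1_{F0}$ is assembled from $\beta$, the associativity constraints $\aso^{\pm1}$ of $\B$, and the comparison $\widehat{F}$ applied to the strict identity $u\circ !_{a_0}=!_{a_1}$ of $\A$ (valid since $\A$ is a category and $0$ is initial). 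The structure constraints of $R$ come from those of $F$ and the coherence of $\B$; checking the lax-functor axioms for $R$ is the first routine-but-lengthy verification.

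\emph{The two transformations.} Next I would produce transformations witnessing $JR\simeq 1$ and $RJ\simeq 1$ up to homotopy. For $JR\colon F\!\downarrow\!F'\to F\!\downarrow\!F'$ one has $JR(a,f,a')=(0,\,f\circ F!_a,\,a')$, and I would define a transformation $JR\Rightarrow 1$ with components the $1$-cells $(!_a,\theta,1_{a'})\colon(0,f\circ F!_a,a')\to(a,f,a')$, where $\theta\colon F'1_{a'}\circ(f\circ F!_a)\Rightarrow f\circ F!_a$ is built from $\widehat{F}'$ and the unit constraint $\bl$. For $RJ\colon F0\!\downarrow\!F'\to F0\!\downarrow\!F'$ one has $RJ(f_0,a'_0)=(f_0\circ F1_0,\,a'_0)$, which differs from the identity only through $\widehat{F}_0\colon 1_{F0}\Rightarrow F1_0$; since this $2$-cell need not be invertible ($F$ is merely lax) the comparison cannot be taken to be an isomorphism, but it still assembles into a genuine transformation $1\Rightarrow RJ$ built from $\widehat{F}_0$, $\widehat{F}'$, and the unit constraints $\bl,\br$. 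By Lemma \ref{fact1}$(ii)$ each transformation induces a homotopy on classifying spaces, and by the functoriality of Lemma \ref{fact1}$(i)$ we have $\BB(JR)=\BB J\,\BB R$ and $\BB(RJ)=\BB R\,\BB J$; hence $\BB J$ and $\BB R$ are mutually inverse homotopy equivalences, proving $\BB(F0\!\downarrow\!F')\simeq\BB(F\!\downarrow\!F')$.

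\emph{The main obstacle.} The only place the hypothesis enters is through the strict equality $u\circ !_{a_0}=!_{a_1}$ in $\A$, which is needed both to define $R$ on $1$-cells and to produce the naturality $2$-cells of the transformation $JR\Rightarrow 1$: it is exactly this equality that lets the composites $Fu\circ F!_{a_0}$ and $F!_{a_1}$ be coherently identified via $\widehat{F}$, and were $\A$ a genuine bicategory the equality would hold only up to a $2$-cell and the construction would collapse. The remaining work is bookkeeping: after expanding the pasting diagrams that define $\circledcirc$, $\imath$, $\imath'$ and the constraints $\overset{_\circ}{1}$, all the coherence axioms for $R$ and for the two transformations reduce to instances of the pentagon and triangle identities in $\B$ together with the lax/oplax coherence of $F$ and $F'$. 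These are the long diagram chases I would carry out explicitly but not reproduce in the text.
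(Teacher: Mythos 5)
Your proposal is correct and takes essentially the same route as the paper's own proof: the paper's functor $L:F\!\downarrow\!F'\to F\text{\scriptsize 0}\!\downarrow\!F'$ is exactly your $R$ (whiskering the middle component with $F\langle a\rangle$, using the strict equality $u\circ\langle a_0\rangle=\langle a_1\rangle$ available because $\A$ is a category with initial object), and the paper likewise builds the two pseudo-transformations $JL\Rightarrow 1$ and $1\Rightarrow LJ$ from $\widehat{F}$, $\widehat{F}'$ and the unit constraints, then concludes via Lemma \ref{fact1} that $\BB J$ is a homotopy equivalence. The only cosmetic difference is that the paper records $L$ as a $2$-functor rather than merely a lax functor, which does not affect the argument.
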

\begin{proof}
We only prove $(i)$ since the proof of $(ii)$ is parallel. Let
$\langle a\rangle :\text{{\scriptsize 0}}\to a$ be the unique
morphism in $\A$ from the initial object to $a$. There is a
2-functor $ L:F\!\downarrow \!F'\to F\text{{\scriptsize
0}}\!\downarrow\! F' $ given on cells by
$$
\xymatrix@C=3pc{
(a_0,f_0,a'_0)\ar@/^12pt/[r]^{(u,\beta,u')}
             \ar@/_12pt/[r]_{({u},\bar{\beta},\bar{u}')}
\ar@{}[r]|{\Downarrow(1,\alpha')}
&(a_1,f_1,a'_1)}\overset{L}\mapsto
 \xymatrix@C=3pc{
(f_0\circ F\langle a_0\rangle,\!a'_0)\ar@/^12pt/[r]^{(\ell(u,\beta, u'),u')}
 \ar@/_12pt/[r]_{(\ell({u},\bar{\beta},\bar{u}'),\bar{u}')}
     \ar@{}[r]|{\Downarrow\alpha'}
&(f_1\circ F\langle a_1\rangle, a'_1),}
$$
where, for any 1-cell $(u,\beta,u'):(a_0,f_0,a'_0)\to
(a_1,f_1,a'_1)$ of $F\!\downarrow \!F'$, $\ell(u,\beta,u')$ is the
2-cell of $\B$ obtained by pasting the diagram
$$\xymatrix{F\text{{\scriptsize
0}}
\ar@{}@<-58pt>[dd]|{\textstyle \ell(u,\beta,u'):}
\ar@{}@<25pt>[d]|(.33){\br^{-1}\!\cdot \widehat{F}}|(.55){\cong}
\ar[d]_{F\langle a_0\rangle}\ar[r]^{1}&F\text{{\scriptsize
0}}\ar[d]^{F\langle a_1\rangle}\\
Fa_0
\ar@{}@<25pt>[d]|(.35){\beta}|(.5){\Rightarrow}
\ar[d]_{f_0}\ar[r]^{Fu}&Fa_1\ar[d]^{f_1}\\F'\!a'_0\ar[r]_{F'\!u'}&F'\!a'_1
}
$$
that is,
$$
\begin{array}{cl}
\ell(u,\beta,u')=\Big(&\hspace{-0.3cm}F'u'\circ (f_0\circ
F\langle a_0\rangle)\overset{\aso^{-1}}\Longrightarrow
( F'u'\circ f_0)\circ
F\langle a_0\rangle \overset{\beta\circ 1}\Longrightarrow
(f_1\circ Fu)\circ F\langle a_0\rangle \overset{\aso}\Longrightarrow\\
 &\hspace{-0.3cm}f_1\circ (Fu\circ F\langle a_0\rangle)
\overset{1\circ \widehat{F}}\Longrightarrow
 f_1\circ F(u\circ \langle a_0\rangle)  = f_1\circ F\langle a_1\rangle
   \overset{\br^{-1}}\Longrightarrow(f_1\circ F\langle a_1\rangle)\circ 1\Big).
\end{array}
$$

In addition, there are two pseudo-transformations
$$
1_{F\text{{\tiny 0}}\downarrow F'}\Rightarrow LJ,
\hspace{0.4cm}JL\Rightarrow 1_{F\downarrow F'}.
$$
The first one has as a component, at any object $(f,a')$ of
$F\text{{\scriptsize 0}}\!\downarrow\!F'$, the 1-cell
$$\begin{array}{c}
(\eta(f,a'),1_{a'}):(f,a')\to (f\circ F1_{\text{{\tiny 0}}},a'),\\
\eta(f,a')=\xymatrix@C=15pt{\big(F'1_{a'}\circ f\ar@2[r]^{\widehat{F}'\circ 1}
 & 1_{F'a'}\circ f\ar@2[r]^-{\bl}
 & f\ar@2[r]^-{\br^{-1}}
& f\circ 1_{F\text{{\tiny 0}}} \ar@2[r]^(.45){1\circ \widehat{F}}
 & f\circ F1_{\text{{\tiny 0}}}\ar@2[r]^-{\br^{-1}}
&(f\circ F1_{\text{{\tiny 0}}})\circ
1_{F\text{{\tiny 0}}}\big)}
\end{array}
$$
while its naturality component, at any 1-cell
$(\beta,u'):(f_0,a'_0)\to (f_1,a'_1)$ of $F\text{{\scriptsize
0}}\!\downarrow\!F'$, is given
 by the canonical isomorphism $\bl^{-1}\cdot \br: u'\circ 1_{a'_0}\cong 1_{a'_1}\circ u'
 $,
 $$
\xymatrix@C=7pc{
(f_0,a'_0)\ar[d]_-{(\eta,1)}\ar[r]^{(\beta,u')}
  \ar@{}@<80pt>[d]|(.55){\cong}|(.42){\bl^{-1}\cdot \br}
 & (f_1,a'_1)\ar[d]^{(\eta,1)}
\\ (f_0\circ F1_{\text{{\tiny 0}}},a'_0)
 \ar[r]_{(\ell(1_{\text{{\tiny 0}}},\imath(\beta,u'),u'),u')}
 & (f_1\circ F1_{\text{{\tiny 0}}},a'_1).}
$$

As for the pseudo-transformation $JL\Rightarrow 1_{F\downarrow F'}$,
it associates to an object $(a,f, a')$ in $F\!\downarrow\!F'$ the
1-cell
$$\begin{array}{c}
(\langle a\rangle,\epsilon(a,f,a'), 1_{a'}):
(\text{{\scriptsize 0}},f\circ F\langle a\rangle,a')\to (a,f,a')
\\
\epsilon(a,f,a')=\xymatrix@C=15pt{\big(F'1_{a'}\circ (f\circ F\langle
a\rangle)\ar@2[r]^{\widehat{F}'\circ 1}
 & 1_{F'a'}\circ (f\circ F\langle a\rangle) \ar@2[r]^-{\bl}
 & f \circ F\langle a\rangle\big)}
 \end{array}
$$
while its naturality component, at a 1-cell
$(u,\beta,u'):(a_0,f_0,a'_0)\to (a_1,f_1,a'_1)$ of
$F\!\downarrow\!F'$, is
$$ \xymatrix@C=9pc{
(\text{{\scriptsize 0}},f_0\circ F\langle a_0
\rangle,a'_0)\ar[d]_-{(\langle a_0\rangle,\epsilon,1)}
        \ar[r]^{(1_{\text{{\tiny 0}}},\imath(\ell(u,\beta,u'),u'),u')}
  \ar@{}@<100pt>[d]|(.55){\cong}|(.42){(1,\bl^{-1}\cdot \br)}
 & (\text{{\scriptsize 0}},f_1\circ F\langle a_1
\rangle,a'_1)\ar[d]^-{(\langle a_1\rangle,\epsilon,1)}
\\ (a_0,f_0,a'_0)\ar[r]_{(u,\beta,u')} 
 & (a_1,f_1,a'_1).}
$$

Therefore, by Lemma \ref{fact1}, there are homotopies $\BB J\,\BB
L\Rightarrow 1_{\BB(F\downarrow F')}$ and 
$1_{\BB(F\text{{\tiny 0}}\downarrow F')}\Rightarrow \BB L\,\BB J$ making $\BB J$ a homotopy equivalence.
\end{proof}

As we will see below, the following result is the key for proving
Theorem \ref{mainTheorem}.

\begin{lemma}\label{lemma2}

$(i)$ If an oplax functor $F':\A'\to \B$ has the property {\em
B}$_o$, then, for any lax functor $F:\A\to \B$, the commutative
square
    \begin{equation}\label{hbarsquarel}\begin{array}{c}
\xymatrix@C=20pt@R=20pt{\BB(F\!\downarrow\!F')\ar[r]^-{\BB\bar{F}}\ar[d]_{\BB P}
  &\BB(\B\!\downarrow\!F')\ar[d]^{\BB P}\\
\BB\A\ar[r]^{\BB F}&\BB \B,}
\end{array}
\end{equation}
induced by the first square in $(\ref{pullbackSquares})$ on
classifying spaces, is a homotopy pullback.

$(ii)$ If a lax functor $F:\A\to \B$ has the property {\em B}$_l$,
then, for any oplax functor $F'\!:\A'\to \B$, the commutative square
    \begin{equation}\label{hbarsquareo}\begin{array}{c}
\xymatrix@C=20pt@R=20pt{\BB(F\!\downarrow\!F')\ar[r]^-{\BB P'}\ar[d]_{\BB\bar{F}'}
&\BB\A'\ar[d]^{\BB F'}\\
\BB (F\!\downarrow\!\B)\ar[r]^-{\BB P'}&\BB\B,}
\end{array}
\end{equation}
induced by the second square in $(\ref{pullbackSquares})$ on
classifying spaces, is a homotopy pullback.
\end{lemma}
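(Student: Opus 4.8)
The plan is to prove $(i)$ and to obtain $(ii)$ by the dual argument (replacing the lax geometric nerve $\Delta$ by the oplax nerve $\nabla$, Lemma \ref{pullbackLemma}$(i)$ by $(ii)$, Lemma \ref{lemma1}$(i)$ by $(ii)$, and property $\mathrm{B}_o$ by $\mathrm{B}_l$). Throughout I work with the lax geometric nerve $\Delta$ of \eqref{eqgeoner}, whose $p$-simplices of a bicategory $\C$ are the lax functors $[p]\to\C$. All four functors in the first square of \eqref{pullbackSquares} are lax (the two projections are even $2$-functors, and $\bar F$ is lax), so applying $\Delta$ produces a commutative square of simplicial sets; and by Lemma \ref{pullbackLemma}$(i)$, tested against each $[p]$, together with $\Delta\C_p=\mathrm{Lax}([p],\C)$, it is a \emph{pullback} square, $\Delta(F\!\downarrow\!F')\cong\Delta\A\times_{\Delta\B}\Delta(\B\!\downarrow\!F')$. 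Thus I am exactly in the situation of Lemma \ref{simlem}, applied to $g=\Delta P:\Delta(\B\!\downarrow\!F')\to\Delta\B$ and $f=\Delta F$, and it suffices to verify condition $(i)$ of that lemma for $g$.

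First I would identify the fibres of $g$. For a simplex $x:\Delta[n]\to\Delta\B$, that is, a lax functor $x:[n]\to\B$, the same pullback argument --- now Lemma \ref{pullbackLemma}$(i)$ applied to the first square of \eqref{pullbackSquares} with $F=x$, using that lax functors into the locally discrete $2$-category $[n]$ are just order preserving maps --- gives a natural isomorphism $g^{-1}(x)\cong\Delta(x\!\downarrow\!F')$, under which a morphism $\sigma:\Delta[m]\to\Delta[n]$ induces $\Delta(\sigma\!\downarrow\!F')$ on fibres. Condition \ref{simlem}$(i)$ then becomes the assertion that for every lax functor $x:[n]\to\B$ and every $\sigma:[m]\to[n]$ the induced homomorphism $(x\sigma)\!\downarrow\!F'\to x\!\downarrow\!F'$ is a homotopy equivalence on classifying spaces (via \eqref{4h}).

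There are two key inputs. On one hand, Lemma \ref{lemma1}$(i)$ applies since each $[n]$ is a category with initial object $0$: the inclusion $x(0)\!\downarrow\!F'\to x\!\downarrow\!F'$ of the fibre over the initial vertex (the map $J$, for $F=x$) is always a homotopy equivalence. On the other hand, property $\mathrm{B}_o$ enters through the edges: for the edge $e_j:[1]\to[n]$ from $0$ to a vertex $j$, writing $p=x(0\!\to\!j)$, the composite of the last-vertex inclusion $x(j)\!\downarrow\!F'\to (xe_j)\!\downarrow\!F'$ with the retraction $L$ of Lemma \ref{lemma1}$(i)$ sends $(f,a')\mapsto(f\circ p,a')$, i.e.\ coincides with the $2$-functor $p^*$ of \eqref{ps}; since $\BB L$ is an equivalence and $\BB p^*$ is an equivalence by $\mathrm{B}_o$, this last-vertex inclusion is a homotopy equivalence. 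Combining these with the two out of three property, I would first deduce that every \emph{vertex} inclusion $x(j)\!\downarrow\!F'\to x\!\downarrow\!F'$ is an equivalence (factoring $j$ as $[0]\to[1]\to[n]$ through $e_j$ and comparing with the initial-vertex inclusions of $x$ and $xe_j$), and then that $\Delta(\sigma\!\downarrow\!F')$ is an equivalence for arbitrary $\sigma$ (comparing it, via the initial-vertex inclusions of $x$ and of $x\sigma$, with the vertex-$\sigma(0)$ inclusion). This verifies \ref{simlem}$(i)$, so by \ref{simlem}$(iii)$ the square obtained from the first square of \eqref{pullbackSquares} by applying $|\Delta(-)|$ is a homotopy pullback.

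It remains to transport the conclusion from $|\Delta(-)|$ to $\BB(-)$. Here I would invoke Lemma \ref{facts2}: the equivalence $|\Delta\C|\simeq\BB\C$ is homotopy natural on lax functors, and all four maps of the square are lax. This yields a homotopy-commutative cube whose two horizontal faces are the $|\Delta(-)|$-square and the target square \eqref{hbarsquarel} and whose four vertical edges are homotopy equivalences; invariance of homotopy pullbacks under such levelwise equivalences gives that \eqref{hbarsquarel} is a homotopy pullback. The main obstacle I anticipate is the bookkeeping of the third paragraph: checking that under $g^{-1}(x)\cong\Delta(x\!\downarrow\!F')$ the simplicial operators really correspond to the comma functors $\sigma\!\downarrow\!F'$, and that the last-vertex inclusion followed by $L$ genuinely equals $p^*$ --- this being the precise point at which property $\mathrm{B}_o$ is converted into the hypothesis of Lemma \ref{simlem}.
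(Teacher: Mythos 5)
Your proposal is correct, and its overall architecture is the same as the paper's: both reduce the statement, via Lemma \ref{pullbackLemma}$(i)$ (which shows $\Delta$ carries the first square of $(\ref{pullbackSquares})$ and its restrictions over simplices to pullbacks of simplicial sets) and the fibre criterion of Lemma \ref{simlem}, to proving that each induced map $\bar{\sigma}\colon \x\sigma\!\downarrow\!F'\to \x\!\downarrow\!F'$ is a classifying-space equivalence, and both then transport the resulting homotopy pullback from $|\Delta(-)|$ to $\BB(-)$ via the homotopy naturality of $(\ref{4h})$ (Lemma \ref{facts2}). The one genuine divergence is the mechanism by which property $\mathrm{B}_o$ is converted into that equivalence. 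The paper fills the non-commuting square formed by $\bar{\sigma}$, the $2$-functor $\x(0,\sigma 0)^*$ of $(\ref{ps})$, and the two initial-vertex embeddings $J$ of $(\ref{JF})$ with an explicitly constructed pseudo-transformation $\theta$, then applies Lemma \ref{fact1}$(ii)$ and two-out-of-three. You avoid constructing any new transformation: your key identity is that the last-vertex embedding $\x(j)\!\downarrow\!F'\to (\x e_j)\!\downarrow\!F'$ followed by the retraction $L$ of Lemma \ref{lemma1}$(i)$ equals $p^*$ on the nose, for $p=\x(0\to j)$; this does hold, by the unit coherence axiom of the lax functor $\x e_j$ together with the triangle identity of $\B$ (exactly the bookkeeping you flag as the risk point), and the remaining triangles you use ($\bar{e}_j$ composed with the two vertex embeddings of $\x e_j$, and $\bar{\sigma}$ composed with the initial-vertex embedding of $\x\sigma$) commute strictly because the comparison functors are induced by the strict maps $\sigma$ and have matching constraints. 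What your route buys is that, inside this lemma, only strictly commuting diagrams and two-out-of-three are needed (pseudo-transformations enter only upstream, inside Lemma \ref{lemma1}); what it costs is the coherence computation identifying $L\circ J$ with $p^*$, which in the paper is replaced by the verification that $\theta$ is pseudo-natural. Both are sound; yours is arguably the more economical write-up of this step.
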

\begin{proof}
  Suppose that $F':\A'\to \B$ is any given oplax functor having
  the property B$_o$. We will prove that the simplicial map $\Delta
  P:\Delta(\B\!\downarrow\!F')\to \Delta\B$, induced on geometric
  nerves by the projection 2-functor $P:\B\!\downarrow\!F'\to \B$ in
  $(\ref{PP'})$, satisfies the condition $(i)$ of Lemma \ref{simlem}. To do so, let
  $\x:[n]\to \B$ be any geometric $n$-simplex  of $\B$. Thanks to
  Lemma \ref{pullbackLemma} $(i)$, the square
$$
\xymatrix@C=20pt@R=20pt{\x\!\downarrow\!F'\ar[r]^{\bar{\x}}\ar[d]_{P}
& \B\!\downarrow\!F'
\ar[d]^{P}\\
          [n]\ar[r]^\x&\B}
$$
is a pullback in the category of bicategories and lax functors,
whence the square induced by taking geometric nerves
$$
\xymatrix@C=20pt@R=20pt{
\Delta(\x\!\downarrow\!F')\ar[r]^{\Delta\bar{\x}}\ar[d]_{\Delta P}
&\Delta(\B\!\downarrow\!F')
\ar[d]^{\Delta P}\\
          \Delta[n]\ar[r]^{\Delta \x}&\Delta\B}
$$
is a pullback in the category of simplicial sets. Therefore,
$\xymatrix{\Delta P^{-1}(\Delta\x)\cong\Delta(\x\!\downarrow\!F')}$.
Furthermore, for any map $\sigma:[m]\to [n]$ in the simplicial
category, the diagram of lax functors
$$
\xymatrix@R=8pt{
\x\sigma\!\downarrow\!F'\ar[rd]_{\bar{\sigma}}\ar[rrd]^{\overline{\x\sigma}}
\ar[dd]_{P}&& \\
&\x\!\downarrow\!F'\ar[dd]^{P}\ar[r]_{\bar{\x}}
&\B\!\downarrow\!F' \ar[dd]^{P}\\
 [m]\ar[rd]_{\sigma}\ar[rrd]^(.27){\x\sigma}|!{[ru];[rd]}\hole&& \\
 & [n]\ar[r]_{\x}&\B}
$$
is commutative, whence the induced diagram of simplicial maps
$$
\xymatrix@R=8pt{
\Delta(\x\sigma\!\downarrow\!F')\ar[rd]_{\Delta\bar{\sigma}}
\ar[rrd]^{\Delta\overline{\x\sigma}}
\ar[dd]_{\Delta P}&& \\
&\Delta(\x\!\downarrow\!F')\ar[dd]^{\Delta P}\ar[r]_{\Delta\bar{\x}}
&\Delta(\B\!\downarrow\!F')
 \ar[dd]^{\Delta P}\\
 \Delta[m]\ar[rd]_{\Delta\sigma}
          \ar[rrd]^(.27){\Delta(\x\sigma)}|!{[ru];[rd]}\hole&& \\
 & \Delta[n]\ar[r]_{\Delta\x}&\Delta\B}
$$
is also commutative. Consequently,  the diagram below commutes.
$$
\xymatrix{
\Delta P^{-1}(\Delta\x\Delta \sigma)\ar[d]_{\cong}\ar[r]
&\Delta P^{-1}(\Delta\x)\ar[d]^{\cong}\\
\Delta(\x\sigma\!\downarrow\!F')\ar[r]^{\Delta\bar{\sigma}}
&\Delta(\x\!\downarrow\!F')
}
$$

Therefore, it suffices to prove that the lax functor
$\bar{\sigma}:\x\sigma\!\downarrow\!F'\to \x\!\downarrow\!F'$
induces a homotopy equivalence on classifying spaces,
$\BB(\x\sigma\!\downarrow\!F')\simeq \BB(\x\!\downarrow\!F')$. But
note that we have the diagram
$$
\xymatrix@R=20pt{
\x\sigma 0\!\downarrow\!F'\ar[r]^{J}\ar[d]_{\x(0,\sigma 0)^*}
         \ar@{}@<30pt>[d]|(.52){\Rightarrow}|(.38){\theta}
 & \x\sigma\!\downarrow\!F'\ar[d]^{\bar{\sigma}}
\\\x 0\!\downarrow\!F'\ar[r]^{J} & \x\!\downarrow\!F'}
$$
where the homomorphisms $J$ are given as in $(\ref{JF})$, and
$\theta$ is the pseudo-transformation that assigns to every object
$(f,a')$ of $\x\sigma 0\!\downarrow\!F'$ the 1-cell of
$\x\!\downarrow\!F'$
$$
\big((0,\sigma 0), \theta(f,a'), 1_{a'}\big):
(0,f\circ \x(0,\sigma 0),a')\to (\sigma 0,f,a'),
$$
where  the 2-cell of $\B$
$$
\xymatrix@C=35pt@R=20pt{
\x 0\ar[r]^{\x(0,\sigma 0)}\ar[d]_{f\circ \x(0,\sigma 0)}
\ar@{}@<30pt>[d]|(.58){\Rightarrow}|(.38){\theta(f,a')}
&\x\sigma 0\ar[d]^{f}\\
F'\!a'\ar[r]_{F'1_{a'}}&F'\!a'
}
$$
is the composite $\theta(f,a')=\big(F'1_{a'}\circ (f\circ
\x(0,\sigma 0))\overset{\widehat{F}'\circ 1}\Longrightarrow
1_{F'a'}\circ (f\circ \x(0,\sigma 0)) \overset{\bl}\Longrightarrow
f\circ \x(0,\sigma 0)\big) $, and its naturality component at any
1-cell $(\beta,u'):(f_0,a'_0)\to (f_1,a'_1)$
$$
\xymatrix@C=8pc{
(0,f_0\circ \x(0,\sigma 0),a'_0)
    \ar[r]^{((0,0),\imath(\beta\circledcirc \x(0,\sigma 0),u'),u')}
\ar[d]_{((0,\sigma 0),\theta(f_0,a'_0),1_{a'_0})}
\ar@{}@<90pt>[d]|(.56){\cong}|(.42){(1,\bl^{-1}\cdot \br)}
  & (0,f_1\circ \x(0,\sigma 0),a'_1)
  \ar[d]^{((0,\sigma 0),\theta(f_1,a'_1),1_{a'_1})}
\\ (\sigma 0,f_0,a'_0)
\ar[r]_{((\sigma 0,\sigma 0),\imath(\beta,u'),u')}
 & (\sigma 0,f_1,a'_1)
}
$$
is given by the canonical isomorphism $\bl^{-1}\cdot \br:u'\circ
1_{a'_1}\cong 1_{a'_2}\circ u'$ in $\A'$. Therefore, by Lemma
\ref{fact1}, the induced square on classifying spaces
$$
\xymatrix{
\BB(\x\sigma 0\!\downarrow\!F')\ar@{}@<35pt>[d]|(.5){\Rightarrow}|(.35){\BB\theta}\ar[r]^{\BB J}
\ar[d]_{\BB\x(0,\sigma 0)^*}
 & \BB(\x\sigma\!\downarrow\!F')\ar[d]^{\BB\bar{\sigma}}
\\\BB(\x 0\!\downarrow\!F')\ar[r]^{\BB J} & \BB(\x\!\downarrow\!F')}
$$
is homotopy commutative. Moreover, by Lemma \ref{lemma1}$(i)$, both
maps $\BB J$ in the square are homotopy equivalences and, since the
oplax functor $F'$ has the property B$_o$, the map $\BB \x(0,\sigma
0)^*:\BB(\x\sigma 0\!\downarrow\!F')\to \BB(\x 0\!\downarrow\!F')$
is also a homotopy equivalence. It follows that the remaining map in
the square has the same property, that is, the map $\BB\bar{\sigma}:
\BB(\x\sigma\!\downarrow\!F')\simeq \BB(\x\!\downarrow\!F') $ is a
homotopy equivalence, as required.

Suppose now that $F:\A\to \B$ is any lax functor. Again, by Lemma
\ref{pullbackLemma}$(i)$, the first square in
$(\ref{pullbackSquares})$ is a pullback in the category of
bicategories and lax functors, whence the square induced by taking
geometric nerves
\begin{equation}\label{dsqp}\begin{array}{c}
\xymatrix@R=20pt{
\Delta(F\!\downarrow\!F')\ar[r]^{\Delta\bar{F}}\ar[d]_{\Delta P}
&\Delta(\B\!\downarrow\!F')
\ar[d]^{\Delta P}\\
          \Delta\A\ar[r]^{\Delta F}&\Delta\B}
          \end{array}
\end{equation}
is a pullback in the category of simplicial sets. By what has been
already proven above, it follows from Lemma \ref{simlem} $(iii)$
that the commutative square
$$
\xymatrix{
|\Delta(F\!\downarrow\!F')|\ar[r]^-{|\Delta\bar{F}|}\ar[d]_{| P|}
&|\Delta(\B\!\downarrow\!F')|\ar[d]^{|P|}
   \ar@{}@<40pt>[d]|{\textstyle \overset{(\ref{4h})}\simeq}\\
|\Delta\A|\ar[r]^{|\Delta F|}&|\Delta \B|}\hspace{0.5cm}
\xymatrix@R=25pt{
\BB(F\!\downarrow\!F')\ar[r]^-{\BB\bar{F}}\ar[d]_{\BB P}
&\BB(\B\!\downarrow\!F')\ar[d]^{\BB P}\\
\BB\A\ar[r]^{\BB F}&\BB \B}
$$
is a homotopy pullback. This completes the proof of part $(i)$ of
the lemma.

The proof of part $(ii)$ follows similar lines, but using the
geometric nerve functor $\nabla$ instead of $\Delta$ as above. Thus,
for example, given any lax functor $F:\A\to \B$ having the property
B$_l$, we start by proving that the simplicial map $\nabla
P':\nabla(F\!\downarrow\!\B)\to \nabla \B$ satisfies the condition
$(i)$ in Lemma \ref{simlem}, which we do by first getting natural
simplicial isomorphisms $\nabla P'^{-1}(\nabla\x')\cong
\nabla(F\!\downarrow\!\x')$, for the different oplax functors
$\x':[n]\to \B$ (i.e., the simplices of $\nabla\B$), and then by
proving that any simplicial map $\sigma:[m]\to [n]$ induces a
homotopy equivalence $\BB(F\!\downarrow\!\x'\sigma)\simeq
\BB(F\!\downarrow\!\x')$. Here, we  need to use the homomorphisms
$J':F\!\downarrow\!\x' n\to F\!\downarrow\!\x'$ in $(\ref{JF})$,
which induce homotopy equivalences on classifying spaces by Lemma
\ref{lemma1} $(ii)$, and the existence of a pseudo-transformation
$\theta':\bar{\sigma}\, J'\Rightarrow J'\,\x'(\sigma m,n)_*$,
which assigns to every object $(a,f)$ of $F\!\downarrow\!\x'\sigma
m$ the 1-cell $ \big(1_a, \theta'(a,f), (\sigma m, n)\big):(a,
f,\sigma m)\to (a, \x'(\sigma m,n)\circ f, n) $, where
$$\theta'(a,f)=\big(\x'(\sigma m,n)\circ
f\overset{\br^{-1}}\Longrightarrow (\x'(\sigma m,n)\circ f)\circ
1_{Fa} \overset{1\circ \widehat{F}}\Longrightarrow (\x'(\sigma
  m,n)\circ f)\circ F1_{a}\big) .$$ Using  Lemma
\ref{pullbackLemma} $(ii)$ therefore, we deduce that, for any lax
functor $F':\A'\to \B$, the square
$$
\xymatrix@R=20pt{
\nabla(F\!\downarrow\!F')\ar[r]^-{\nabla P'}\ar[d]_{\nabla \bar{F}'}
&\nabla \A'\ar[d]^{\nabla F'}\\
\nabla (F\!\downarrow\!\B)\ar[r]^{\nabla P'}&\nabla \B,}
 $$
 is a  pullback in the  category of simplicial
 sets which, by Lemma \ref{simlem}, induces a homotopy pullback square on geometric realizations.
It follows that  $(\ref{hbarsquareo})$ is a homotopy pullback.
\end{proof}

With the corollary below we will be ready to complete the proof of
Theorem \ref{mainTheorem}.

\begin{corollary}\label{bacor}
$(i)$ For any lax functor $F:\A \to \B$, the projection $2$-functor
$P:F\!\downarrow\!\B \to \A$ induces a homotopy equivalence on
classifying spaces,  $\BB(F\!\downarrow\!\B)\simeq\BB\A$.

\vspace{0.2cm} $(ii)$ For any oplax functor $F':\A'\to \B$, the
projection $2$-functor $P':\B\!\downarrow\!F'\to \A'$ induces a
homotopy equivalence on classifying spaces,
$\BB(\B\!\downarrow\!F')\simeq \BB\A'$.
\end{corollary}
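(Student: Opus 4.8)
The plan is to prove both parts by the same device: exhibit a section of the projection together with a lax (resp.\ oplax) transformation relating the other round-trip composite to the identity, and then pass to classifying spaces via Lemma~\ref{fact1}. The two statements are formally dual, so I would carry out $(i)$ in detail and only indicate the changes for $(ii)$. I deliberately avoid routing this through Lemma~\ref{lemma2}, since applying it with $F'=1_\B$ would leave me to show that $\BB(\B\!\downarrow\!\B)\to\BB\B$ is an equivalence, which is itself the case $F=1_\B$ of the very statement being proved.

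For $(i)$, recall that the $0$-cells of $F\!\downarrow\!\B=F\!\downarrow\!1_\B$ are triples $(a,f,c)$ with $f\colon Fa\to c$ a $1$-cell of $\B$, and that $P(a,f,c)=a$. First I would define a lax functor $S\colon\A\to F\!\downarrow\!\B$ on cells by $a\mapsto(a,1_{Fa},Fa)$, $u\mapsto(u,\bl^{-1}\!\cdot\br,Fu)$ and $\alpha\mapsto(\alpha,F\alpha)$, its composition and unit constraints being $(1,\widehat F_{u_2,u_1})$ and $(1,\widehat F_a)$, that is, those of $F$ placed in the last coordinate. Because the first coordinates of all these constraints are identities, one has $PS=1_\A$ strictly, and hence $\BB P\,\BB S=1_{\BB\A}$ by the functoriality of $\BB$ on lax functors (Lemma~\ref{fact1}$(i)$).

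The heart of the argument is to produce a lax transformation $\epsilon\colon SP\Rightarrow 1_{F\downarrow\B}$. Its component at $(a,f,c)$ is the $1$-cell
$$
(1_a,\ 1_f\circ\widehat F_a,\ f)\colon (a,1_{Fa},Fa)\to(a,f,c),
$$
where the middle $2$-cell $1_f\circ\widehat F_a\colon f\circ 1_{Fa}\Rightarrow f\circ F1_a$ is assembled from the lax unit constraint $\widehat F_a\colon 1_{Fa}\Rightarrow F1_a$, and whose naturality $2$-cells are the evident coherence isomorphisms $\bl^{-1}\!\cdot\br$ supplied by the unit constraints of $\A$ and $\B$. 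I expect the only genuinely computational point to be the verification that these data obey the coherence axioms of a lax transformation; this reduces to the pentagon and triangle identities together with the naturality and coherence of $\widehat F$, in the same spirit as the pseudo-transformations built in Lemma~\ref{lemma1}. Granting it, Lemma~\ref{fact1}$(ii)$ produces a homotopy $\BB S\,\BB P=\BB(SP)\simeq 1_{\BB(F\downarrow\B)}$, which together with $\BB P\,\BB S=1_{\BB\A}$ exhibits $\BB P$ as a homotopy equivalence with homotopy inverse $\BB S$.

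Part $(ii)$ is the formal dual. Here one uses the oplax functor $S'\colon\A'\to\B\!\downarrow\!F'$ given on $0$-cells by $a'\mapsto(F'a',1_{F'a'},a')$, which satisfies $P'S'=1_{\A'}$, together with the oplax transformation $\eta\colon 1_{\B\downarrow F'}\Rightarrow S'P'$ whose component at $(c,f,a')$ is
$$
(f,\ \widehat F'_{a'}\circ 1_f,\ 1_{a'})\colon (c,f,a')\to(F'a',1_{F'a'},a'),
$$
the middle $2$-cell now coming from the oplax unit constraint $\widehat F'_{a'}\colon F'1_{a'}\Rightarrow 1_{F'a'}$. The same application of Lemma~\ref{fact1} then gives $\BB(\B\!\downarrow\!F')\simeq\BB\A'$. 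One may note that these deformations refine Lemma~\ref{cont}: the contractibility of the coslice $Fa\!\downarrow\!\B$ (resp.\ of the slice $\B\!\downarrow\!F'a'$) is exactly the restriction of the section–retraction pair constructed here to the fibre of $P$ over $a$ (resp.\ of $P'$ over $a'$).
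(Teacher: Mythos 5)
Your strategy --- a strict section $S$ of $P$ with $PS=1_\A$, a transformation $SP\Rightarrow 1_{F\downarrow\B}$, and then Lemma \ref{fact1} --- is genuinely different from the paper's proof, which applies Lemma \ref{lemma2}$(i)$ with $F'=1_\B$ (legitimate because $1_\B$ has property B$_o$ by Lemma \ref{cont}) and then proves that $\BB P:\BB(\B\!\downarrow\!\B)\to\BB\B$ is an equivalence by showing its homotopy fibres over $0$-cells are contractible, again via Lemma \ref{cont}; the general case then follows from the homotopy pullback property of the square. In particular your circularity worry is unfounded: the paper does not assume the case $F=1_\B$, it derives it from the contractibility of the bicategories $b\!\downarrow\!\B$. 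Your route, if completed, would be more self-contained, since it bypasses Lemma \ref{simlem}, Lemma \ref{lemma2} and Lemma \ref{cont} entirely, and it is the natural bicategorical analogue of the classical argument that a category admitting an adjoint to a point has contractible nerve.

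However, there is a concrete error in your construction of $\epsilon\colon SP\Rightarrow 1_{F\downarrow\B}$: its naturality $2$-cells cannot be ``coherence isomorphisms supplied by the unit constraints of $\A$ and $\B$''. At a $1$-cell $(u,\beta,p)\colon(a_0,f_0,c_0)\to(a_1,f_1,c_1)$ of $F\!\downarrow\!\B$, the required datum is a $2$-cell of $F\!\downarrow\!\B$ from $(u,\beta,p)\circ\epsilon_{(a_0,f_0,c_0)}$ to $\epsilon_{(a_1,f_1,c_1)}\circ SP(u,\beta,p)$, that is, a pair whose $\A$-component is a $2$-cell $u\circ 1_{a_0}\Rightarrow 1_{a_1}\circ u$ (here $\bl^{-1}\cdot\br$ is indeed correct) and whose $\B$-component is a $2$-cell $p\circ f_0\Rightarrow f_1\circ Fu$. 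No composite of constraints has this type; the only possible choice is $\beta$ itself. The pair $(\bl^{-1}\cdot\br,\,\beta)$ does work: the compatibility condition making it a legitimate $2$-cell of $F\!\downarrow\!\B$ follows from the two unit axioms of the lax functor $F$ together with naturality of $\aso$, $\bl$, $\br$, and its naturality with respect to $2$-cells $(\alpha,\alpha')$ of $F\!\downarrow\!\B$ is precisely the defining identity $(1_{f_1}\circ F\alpha)\cdot\beta=\bar{\beta}\cdot(\alpha'\circ 1_{f_0})$ of such $2$-cells. Two consequences should then be recorded: since $\beta$ need not be invertible, $\epsilon$ is a genuinely lax (not pseudo) transformation --- fortunately Lemma \ref{fact1}$(ii)$ covers lax transformations --- and in part $(ii)$ the transformation $\eta\colon 1_{\B\downarrow F'}\Rightarrow S'P'$, whose naturality component at $(q,\gamma,v)$ must likewise be $(\gamma,\,\bl^{-1}\cdot\br)$, comes out as a lax (not oplax) transformation between oplax functors under the paper's conventions. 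With these corrections, plus the verification of the (routine, but still pending) composition and unit axioms for $\epsilon$ and $\eta$, your argument goes through.
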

\begin{proof} Once again we limit ourselves to proving $(i)$. Let
  $F:\A\to \B$ be a lax functor.

  The identity
  homomorphism $1_\B:\B\to \B$ has the property B$_o$ since,
  for any object $b\in \Ob\B$,   the classifying space of the comma
  bicategory $b\!\downarrow\! \B$ is contractible, by Lemma \ref{cont}. Therefore, Lemma
  \ref{lemma2} $(i)$ applies to the case when $F'=1_\B$, and tells us
  that the induced commutative square
$$
\xymatrix@R=20pt{
\BB(F\!\downarrow\!\B)\ar[r]^-{\BB\bar{F}}\ar[d]_{\BB P}
&\BB(\B\!\downarrow\!\B)\ar[d]^{\BB P}\\
\BB\A\ar[r]^{\BB F}&\BB \B,}
$$
is a homotopy pullback. So, it is enough to prove that the map $\BB
P:\BB(\B\!\downarrow\! \B) \to \BB \B$ is a homotopy equivalence. To
do so, let $b$ be any object of $\B$, and let us particularize the
square above to the case where $F=b:[0]\to \B$. Then, we find the
commutative homotopy pullback square
$$
\xymatrix@R=20pt{\BB(b\!\downarrow\!\B)\ar[r]^-{\BB\bar{b}}\ar[d]_{\BB P} &
\BB(\B\!\downarrow\!\B)\ar[d]^{\BB P}\\ \mathrm{pt}\ar[r]^{\BB b}&\BB \B,}
$$
where, by Lemma \ref{cont}, the left vertical map is a homotopy
equivalence. This tells us that the different homotopy fibres of the
map $\BB P:\BB(\B\!\downarrow\!\B)\to \BB\B$ over the 0-cells of
$\BB\B$ are all contractible, and consequently $\BB P$ is actually a
homotopy equivalence.
\end{proof}

We can now complete the proof of Theorem \ref{mainTheorem}:

For any diagram $\xymatrix@C=18pt{\A\ar[r]^-{F} & \B
&\A'\ar[l]_(.4){F'}}$,  where $F$ is a lax functor and $F'$ is an
oplax functor, the square $(\ref{hpqsquare})$ occurs as the
 outside region in both of the following two diagrams:
\begin{equation}\label{2dia}\begin{array}{cc}
\xymatrix{
\BB(F\!\downarrow\!F')
\ar@{}[rd]|(.46){\mathrm{(\ref{hbarsquarel})}}
\ar@{}@<13pt>[rr]|{=}
\ar[d]_{\BB P}
\ar@/^2pc/[rr]^{\BB P'}\ar[r]^{\BB\bar{F}}& \BB(\B\!\downarrow\!F')
\ar@{}@<30pt>[d]|(.5){\Rightarrow}|(.35){\BB\omega}
\ar[d]_{\BB P}\ar[r]^{\BB P'}&\BB\A'\ar[d]^{\BB F'}\\
\BB\A \ar@{}@<-12pt>[rr]|{=}\ar@/_1.7pc/[rr]_{\BB F}\ar[r]^{\BB F}
&\BB\B\ar[r]^{\BB 1_\B}&\BB\B
}
&
\xymatrix{\BB(F\!\downarrow\!F')
\ar@/_3pc/[dd]_{\BB P}
\ar@{}@<-25pt>[dd]|(.45){=}
\ar[r]^{\BB P'}\ar[d]_{\BB \bar{F}'}
\ar@{}[rd]|(.46){\mathrm{(\ref{hbarsquareo})}}
& \BB \A' \ar[d]^{\BB F'}
\ar@/^2.5pc/[dd]^{\BB F'}
\ar@{}@<18pt>[dd]|(.45){=}
\\
\BB(F\!\downarrow\!\B)
\ar@{}@<30pt>[d]|(.5){\Rightarrow}|(.35){\BB\omega'}
\ar[d]_{\BB P}\ar[r]^{\BB P'}&\BB\B\ar[d]^{\BB 1_\B}\\
\BB\A\ar[r]_{\BB F}&\BB\B
}
\end{array}
\end{equation}
where the inner squares with the homotopies labelled  $\BB\omega$
and $\BB\omega'$ are the particular cases of the squares
$(\ref{hpqsquare})$ obtained when $F=1_\B$ and when $F'=1_\B$,
respectively. The homotopies are respectively induced, by Lemma
\ref{fact1}, by the lax transformations
$$
\xymatrix@R=20pt{\B\!\downarrow\!F'\ar[r]^{P'}\ar[d]_{P}
&\A'\ar[d]^{F'}\\
  \B\ar@{}@<-25pt>[u]|(.7){\omega}|(.55){\Rightarrow}\ar[r]^{1_\B}
&\B}\hspace{0.6cm}
\xymatrix@R=20pt{F\!\downarrow\!\B\ar[r]^{P'}\ar[d]_{P}&\B\ar[d]^{1_\B}\\
  \A\ar@{}@<-25pt>[u]|(.7){\omega'}|(.52){\Rightarrow}\ar[r]^{F}&\B}
$$
which are defined as follows: The lax transformation $\omega$
associates to any object $(b,f,a')$ of $\B\!\downarrow\!F'$ the
1-cell $f:b\to F'a'$, and its naturality component at any 1-cell
$(p,\beta,u'):(b_0,f_0,a'_0)\to (b_1,f_1,a'_1)$ is the 2-cell
$\beta:F'u'\circ f_0\Rightarrow f_1\circ p$. Similarly, $\omega'$
associates to any object $(a,f,b)$ of $F\!\downarrow\!\B$ the 1-cell
$f:Fa\to b$, and its naturality component at any 1-cell
$(u,\beta,p):(a_0,f_0,b_0)\to (a_1,f_1,b_1)$ is $\beta:p\circ
f_0\Rightarrow f_1\circ Fu$. Since, by Corollary \ref{bacor}, both
maps $\BB P':\BB (\B\!\downarrow\!  F')\to \BB\A'$ and $\BB P:\BB
(F\!\downarrow\! \B)\to \BB\A$ are homotopy equivalences, both
squares are homotopy pullbacks. The other inner squares are those
referred to therein.

 The above implies the part $(i)$ of Theorem \ref{mainTheorem} and, furthermore, it follows that the square $(\ref{hpqsquare})$ is a homotopy
pullback whenever one of the inner squares $(\ref{hbarsquarel})$ or
$(\ref{hbarsquareo})$ is a homotopy pullback.  Therefore, Lemma
\ref{lemma2} implies part $(ii)$.

 For proving part $(iii)$, suppose a lax functor $F:\A\to \B$ is given
 such that the square $(\ref{hpqsquare})$ is a homotopy pullback for
 any $F'=b:[0]\to \B$, $b\in\Ob\B$. It follows from the diagram on the
 left in $(\ref{2dia})$ that the inner square (\ref{hbarsquarel})
 $$
 \xymatrix{
\BB(F\!\downarrow\!b)\ar[r]^{\BB\bar{F}}\ar[d]_{\BB P} &\BB(\B\!\downarrow\!b)
\ar[d]^{\BB P}\\
          \BB\A\ar[r]^{\BB F}&\BB\B}
 $$
 is a homotopy pullback for any object $b\in\B$. Then, if $p:b_0\to
 b_1$ is any 1-cell of $\B$, since we have the commutative
 diagram
 $$
\xymatrix@R=12pt@C=14pt{\BB(F\!\downarrow\!b_0)\ar[dd]_{\BB P}
  \ar[rr]^{\BB\bar{F}}\ar[rd]^{\BB p_*}&&
\BB(\B\!\downarrow\!b_0)
\ar[dd]^(.3){\BB P}
\ar[rd]^{\BB p_*}& \\
&\BB(F\!\downarrow\!b_1)\ar[rr]^(.3){\BB\bar{F}}|!{[ru];[dr]}\hole
\ar[ld]^{\BB P}
&&\BB(\B\!\downarrow\!b_1)\ar[ld]^{\BB P} \\
\BB\A\ar[rr]^{\BB F}&&\BB\B}
$$
we deduce that the square
$$
\xymatrix{
\BB(F\!\downarrow\!b_0)\ar[r]^-{\BB\bar{F}}\ar[d]_{\BB p_*}
&\BB(\B\!\downarrow\!b_0)\ar[d]^{\BB p_*}\\
\BB(F\!\downarrow\!b_1)\ar[r]^-{\BB\bar{F}}
&\BB(\B\!\downarrow\!b_1)} \hspace{0.6cm}
$$
is also a homotopy pullback. Therefore, as
$\BB(\B\!\downarrow\!b_0)\simeq \mathrm{pt} \simeq
\BB(\B\!\downarrow\!b_1)$, by Lemma \ref{cont}, we conclude that the
induced map $\BB p_*:\BB(F\!\downarrow\!b_0)\simeq
\BB(F\!\downarrow\!b_1)$ is a homotopy equivalence. That is, the lax
functor $F$ has the property B$_l$.
\endproof

As a corollary, we obtain the following theorem, which is just the
well-known Quillen's Theorem B ~\cite{Quillen1973} when the lax or
oplax functor $F$ in the hypothesis is an ordinary functor between
small categories. The generalization  of Theorem B to lax
functors between bicategories was originally stated and proven by
Calvo, Cegarra, and Heredia in ~\cite[Theorem 5.4]{CCH2013}, and it
also generalizes a similar result by the first author in
~\cite[Theorem 3.2]{Cegarra2011} for the case when $F$ is a
2-functor between 2-categories.
\begin{corollary}\label{hfth} $(i)$ If a lax functor $F:\A\to \B$
 has the property  $\mathrm{B}_l$
  then, for every object $b\in \B$, there is an induced homotopy fibre
  sequence
$$
 \xymatrix{ \BB(F\!\downarrow\!b)\ar[r]^-{\BB P}&\BB\A\ar[r]^-{\BB F}&\BB \B.}
$$

$(ii)$ If an oplax functor $F':\A'\to \B$ has the property
$\mathrm{B}_o$ then, for every object $b\in \B$, there is an induced
homotopy fibre sequence
$$
\xymatrix{ \BB(b\!\downarrow\!F')\ar[r]^-{\BB P'}&\BB\A'\ar[r]^-{\BB
    F'}&\BB \B.}
$$
\end{corollary}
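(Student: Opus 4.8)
The plan is to obtain both parts as immediate instances of Theorem~\ref{mainTheorem}$(ii)$, using two bookkeeping observations: that a single object $b$ of $\B$ is the same thing as the normal homomorphism $b:[0]\to\B$ (so that $b$ may serve as either the lax functor $F$ or the oplax functor $F'$), and that the trivial bicategory $[0]$ has $\BB[0]=\mathrm{pt}$.

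For part $(i)$, I would apply the theorem to the diagram $\xymatrix@C=14pt{\A\ar[r]^-{F}&\B&[0]\ar[l]_-{b}}$, taking $b:[0]\to\B$ for the oplax functor $F'$. Since $F$ is assumed to have the property $\mathrm{B}_l$, Theorem~\ref{mainTheorem}$(ii)$ guarantees that the induced square $(\ref{hpqsquare})$ is a homotopy pullback; in this case it reads
$$
\xymatrix@C=20pt@R=20pt{
\BB(F\!\downarrow\!b)\ar[r]^-{\BB P'}\ar[d]_{\BB P}
&\BB[0]\ar[d]^{\BB b}\\
\BB\A\ar[r]^-{\BB F}&\BB\B.}
$$
Substituting $\BB[0]=\mathrm{pt}$ and the map $\BB b:\mathrm{pt}\to\BB\B$ selecting the $0$-cell $\BB b$, this is precisely the standard homotopy-pullback square defining $\mathrm{Fib}(\BB F,\BB b)$ recalled in Section~\ref{Preliminaires}. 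Hence the whisker map $\BB(F\!\downarrow\!b)\to\mathrm{Fib}(\BB F,\BB b)$ is a homotopy equivalence, which is exactly the assertion that $\BB(F\!\downarrow\!b)\overset{\BB P}\to\BB\A\overset{\BB F}\to\BB\B$ is a homotopy-fibre sequence.

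Part $(ii)$ I would treat symmetrically, applying the theorem to the diagram $\xymatrix@C=14pt{[0]\ar[r]^-{b}&\B&\A'\ar[l]_-{F'}}$, now taking $b:[0]\to\B$ for the lax functor $F$. Since $F'$ is assumed to have the property $\mathrm{B}_o$, Theorem~\ref{mainTheorem}$(ii)$ again makes the induced square a homotopy pullback, and the identification $\BB[0]=\mathrm{pt}$ turns it into the standard square defining $\mathrm{Fib}(\BB F',\BB b)$, yielding the homotopy-fibre sequence $\BB(b\!\downarrow\!F')\overset{\BB P'}\to\BB\A'\overset{\BB F'}\to\BB\B$.

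I do not anticipate any genuine obstacle, since all of the homotopical content is already packaged into Theorem~\ref{mainTheorem}; the proof is purely a matter of specialization. The only points that deserve a word are the two facts highlighted above: that a homomorphism $b:[0]\to\B$ is simultaneously lax and oplax, and that $\BB[0]$ is a single point, so that a homotopy-pullback square having that corner reduces literally to the homotopy-fibre construction of Section~\ref{Preliminaires}.
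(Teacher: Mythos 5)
Your proof is correct and is essentially the paper's own argument: the paper likewise deduces both parts by specializing Theorem \ref{mainTheorem} with $F'=b:[0]\to\B$ for part $(i)$ and $F=b:[0]\to\B$ for part $(ii)$. Your additional remarks (that $b:[0]\to\B$ is a normal homomorphism, hence usable on either side, and that $\BB[0]=\mathrm{pt}$ turns the homotopy pullback into the homotopy-fibre square) simply make explicit the bookkeeping the paper leaves tacit.
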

\begin{proof} It follows from Theorem \ref{mainTheorem}, by taking
  $F'=b:[0]\to \B$ to obtain part $(i)$ and $F=b:[0]\to \B$ for part
  $(ii)$.
\end{proof}

By the above result in ~\cite{CCH2013,Cegarra2011}, the bicategories
$F\!\downarrow\!b$ and $b\!\downarrow\!F'$ are called {\em
homotopy-fibre bicategories}. The following consequence was proven
in ~\cite[Theorem 5.6]{CCH2013}, and it shows a generalization of
Quillen's Theorem A ~\cite{Quillen1973}.

\begin{corollary}\label{tagen}
  $(i)$ Let $F:\A\to \B$ be a lax functor such that the classifying
  spaces of its homotopy-fibre categories are contractible, that is,
  $\BB(F\!\downarrow\!b)\simeq \mathrm{pt}$ for every object $b\in \B$. Then,
  the induced map on classifying spaces $\BB F:\BB\A\to \BB \B$ is a
  homotopy equivalence.

\vspace{0.1cm}
  $(ii)$ Let $F':\A'\to \B$ be an oplax functor such that the
  classifying spaces of its homotopy-fibre categories are
  contractible, that is, $\BB(b\!\downarrow\!F')\simeq \mathrm{pt}$ for every
  object $b\in \B$. Then, the induced map on classifying spaces $\BB
  F':\BB\A'\to \BB \B$ is a homotopy equivalence.
\end{corollary}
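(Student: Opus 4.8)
The plan is to derive this as a direct consequence of the generalized Quillen Theorem~B established in Corollary~\ref{hfth}. I shall prove $(i)$; part $(ii)$ is entirely parallel, replacing $F\!\downarrow\!b$ by $b\!\downarrow\!F'$ and Corollary~\ref{hfth}$(i)$ by Corollary~\ref{hfth}$(ii)$.

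First I would observe that the contractibility hypothesis forces $F$ to enjoy the property $\mathrm{B}_l$. Indeed, for any $1$-cell $p:b_0\to b_1$ of $\B$, the $2$-functor $p_*$ in $(\ref{ps})$ induces a map $\BB p_*:\BB(F\!\downarrow\!b_0)\to\BB(F\!\downarrow\!b_1)$ between two spaces that are assumed contractible, and any continuous map between contractible spaces is a homotopy equivalence. Hence $F$ has the property $\mathrm{B}_l$, and Corollary~\ref{hfth}$(i)$ applies to give, for every object $b\in\B$, a homotopy-fibre sequence $\BB(F\!\downarrow\!b)\to\BB\A\overset{\BB F}\to\BB\B$. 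In other words, the homotopy fibre of $\BB F$ over the $0$-cell $\BB b$ is homotopy equivalent to $\BB(F\!\downarrow\!b)\simeq\mathrm{pt}$.

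It then remains to upgrade ``contractible fibres over the $0$-cells $\BB b$'' to ``contractible fibres over every point of $\BB\B$'', after which the homotopy-fibre characterization recalled in Section~\ref{Preliminaires} --- a map is a homotopy equivalence exactly when all of its homotopy fibres are contractible --- finishes the argument. The geometric input is that the points $\BB b$, $b\in\Ob\B$, are precisely the vertices of the CW-complex $\BB\B$, so each path-component of $\BB\B$ contains at least one of them; since homotopy fibres over points in a common path-component are homotopy equivalent, every homotopy fibre of $\BB F$ is homotopy equivalent to some $\BB(F\!\downarrow\!b)$, hence contractible (and in particular nonempty, so that $\BB F$ is automatically surjective on $\pi_0$).

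I expect this final step --- transporting contractibility from the named vertices $\BB b$ to arbitrary points of $\BB\B$ --- to be the only one requiring genuine care; the isomorphisms on the higher homotopy groups, and the control of $\pi_0$, drop out immediately once the homotopy fibres are known to be contractible.
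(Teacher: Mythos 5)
Your proposal is correct and is essentially the paper's intended route: the paper states Corollary~\ref{tagen} as a direct consequence of Corollary~\ref{hfth} (citing Theorem~5.6 of \cite{CCH2013} for the original proof), and your derivation --- contractibility of all $\BB(F\!\downarrow\!b)$ makes property $\mathrm{B}_l$ trivial, Corollary~\ref{hfth} then identifies $\mathrm{Fib}(\BB F,\BB b)\simeq \BB(F\!\downarrow\!b)\simeq\mathrm{pt}$, and transporting homotopy fibres along paths yields contractibility of every fibre, hence a homotopy equivalence by the homotopy-fibre characterization --- is exactly that argument. One cosmetic caveat: in the model $\BB\B=|\ner\!\int_\Delta\!\ner\B|$ the $0$-cells are indexed by the objects of $\int_\Delta\!\ner\B$ rather than by $\Ob\B$ alone, but every such vertex is joined by an edge to some $\BB b$ (and in the geometric-nerve model $|\Delta^{\hspace{-2pt}\mathrm{u}}\B|$ the vertices are literally the objects of $\B$), so your claim that each path component of $\BB\B$ contains a point $\BB b$ stands.
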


Particular cases of the above results have also been stated by
Bullejos and Cegarra in ~\cite[Theorem 1.2]{BC2003}, for the case
when $F:\A\to \B$ is any 2-functor between 2-categories, and by del
Hoyo in ~\cite[Theorem 6.4]{Hoyo2012}, for the case when $F$ is a
lax functor from a category $\A$ to a 2-category $\B$. In
~\cite[Th\'{e}or\`{e}me 6.5]{Chiche2012}, Chiche proved a relative
Theorem A for lax functors between 2-categories, which also
specializes by giving the particular case of Theorem \ref{tagen}
when $F$ is any lax functor between 2-categories.

Next we study conditions on a bicategory $\B$ in order for the
square $(\ref{hpqsquare})$ to always be a homotopy pullback. We use
that, for any two objects $b$, $b'$ of a bicategory $\B$, there is a
diagram
\begin{equation}\label{ssq}\begin{array}{c}
\xymatrix@R=18pt{
\B(b,b')\ar@{}@<25pt>[d]|(0.35){\gamma}|{\Rightarrow}
   \ar[r]\ar[d]&[0]\ar[d]^{b'}\\
[0]\ar[r]^{b}&\B,}
\end{array}
\end{equation}
in which $\gamma$ is the lax transformation defined by $\gamma f=f$,
for any 1-cell $f:b\to b'$ in $\B$, and whose naturality component
at a 2-cell $\beta:f_0\Rightarrow f_1$, for any $f_0,f_1:b\to b'$,
is the composite 2-cell $\widehat{\gamma}_\beta=\big(1_{b'}\circ
f_0\overset{\bl} \cong f_0\overset{\beta}\Rightarrow
f_1\overset{\br^{-1}}\cong f_1\circ 1_b\big)$.

\begin{theorem}\label{xbs} The following properties of a bicategory
  $\B$ are equivalent:
\begin{itemize}
\item[$(i)$] For any diagram of bicategories
$\xymatrix@C=18pt{\A\ar[r]^-{F} & \B &\A'\ar[l]_(.4){F'}}$,
where $F$ is a lax functor and $F'$ is an oplax functor, the
induced square $(\ref{hpqsquare})$
$$
\xymatrix{
\BB(F\!\downarrow\!F')\ar@{}@<30pt>[d]|(.45){\Rightarrow}\ar[r]^-{\BB P'}\ar[d]_{\BB P}
&\BB \A'\ar[d]^{\BB F'}\\
\BB \A\ar[r]^{\BB F}&\BB\B}
$$
is a homotopy pullback.

\item[$(ii)$] Any lax functor $F:\A\to \B$ has the property
$\mathrm{B}_l$.

\item[$(iii)$] Any oplax functor $F':\A'\to \B$ has the property
$\mathrm{B}_o$.

\item[$(iv)$] For any object $b$ and $1$-cell $p:b_0\to b_1$ in
$\B$, the functor $p_*:\B(b,b_0) \to \B(b, b_1)$ induces a
homotopy equivalence on classifying spaces, $\BB\B(b,b_0) \simeq
\BB\B(b, b_1)$.

\item[$(v)$]  For any object $b$ and $1$-cell $p:b_0\to b_1$ in
$\B$, the functor $p^*:\B(b_1,b) \to \B(b_0, b)$ induces a
homotopy equivalence on classifying spaces, $\BB\B(b_1,b) \simeq
\BB\B(b_0, b)$.

\item[$(vi)$] For any two objects $b,b'\in\B$, the homotopy
  commutative square
$$
\xymatrix@C=20pt@R=20pt{
\BB \B(b,b')\ar@{}@<30pt>[d]|(.4){\BB\gamma}|(.55){\Rightarrow}
\ar[r]\ar[d]&\mathrm{pt}\ar[d]^{\BB b'}\\\mathrm{pt}\ar[r]^{\BB b}&\BB\B,
}
$$
induced by $(\ref{ssq})$, is a homotopy pullback. That is, the
whisker map
$$
\BB\B(b,b')\to
\{\gamma:I\to \BB\B \mid \gamma(0)=\BB b,\gamma(1)=\BB b'\}\subseteq \BB\B^{I}
$$
is a homotopy equivalence.
\end{itemize}
\end{theorem}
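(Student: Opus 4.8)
The plan is to split the six conditions into two triples, $\{(i),(ii),(iii)\}$ and $\{(iv),(v),(vi)\}$, prove each triple internally equivalent, and then join the two triples by one downward and one upward bridge; the upward bridge is the only substantial step.

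For the first triple I would invoke Theorem \ref{mainTheorem} directly. The implications $(ii)\Rightarrow(i)$ and $(iii)\Rightarrow(i)$ are immediate from part $(ii)$ of that theorem, since if $F$ has $\mathrm{B}_l$ or $F'$ has $\mathrm{B}_o$ then $(\ref{hpqsquare})$ is a homotopy pullback; and $(i)\Rightarrow(ii)$, $(i)\Rightarrow(iii)$ follow from part $(iii)$ by specializing $F'$, respectively $F$, to the normal homomorphisms $b:[0]\to\B$. Hence $(i)\Leftrightarrow(ii)\Leftrightarrow(iii)$.

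Next I would record the elementary identification $b\!\downarrow\!b'\cong\B(b,b')$: for the point functors $b,b':[0]\to\B$ the comma bicategory of $(\ref{ff'})$ has as $0$-cells the $1$-cells $f:b\to b'$ and, after absorbing the unit constraints, as $1$-cells the $2$-cells $f_0\Rightarrow f_1$, so it is canonically the hom-category $\B(b,b')$, and under this identification the $2$-functors $p_*,p^*$ of $(\ref{ps})$ become post- and pre-composition with $p$. Thus ``$F=b$ has $\mathrm{B}_l$ for all $b$'' is exactly $(iv)$ and ``$F'=b$ has $\mathrm{B}_o$ for all $b$'' is exactly $(v)$, giving at once the downward bridges $(ii)\Rightarrow(iv)$ and $(iii)\Rightarrow(v)$. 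For the internal equivalences of the second triple I note that $(\ref{ssq})$ is precisely $(\ref{hpqsquare})$ for $F=b,\,F'=b'$: applying Theorem \ref{mainTheorem}$(ii)$ to $F=b$ (resp.\ $F'=b'$) having property $\mathrm{B}$ and specializing the other argument to a point yields $(iv)\Rightarrow(vi)$ and $(v)\Rightarrow(vi)$, while Theorem \ref{mainTheorem}$(iii)$ applied to the point functors gives $(vi)\Rightarrow(iv)$ and $(vi)\Rightarrow(v)$. Hence $(iv)\Leftrightarrow(v)\Leftrightarrow(vi)$.

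It remains to prove the upward bridge, for which I would establish $(v)\Rightarrow(iii)$ (the dual $(iv)\Rightarrow(ii)$ being symmetric, via $\Delta$ and Lemma \ref{lemma1}$(i)$ in place of $\nabla$ and Lemma \ref{lemma1}$(ii)$); this is the \emph{heart} of the theorem. Fixing an oplax $F':\A'\to\B$ and a $1$-cell $p:b_0\to b_1$, I must show $p^*:b_1\!\downarrow\!F'\to b_0\!\downarrow\!F'$ is a homotopy equivalence on classifying spaces. The idea is to fibre $b_j\!\downarrow\!F'$ over $\A'$ and contract its fibres to hom-categories: by Lemma \ref{pullbackLemma}$(ii)$ with $F=b_j$ the relevant square is a pullback of oplax functors, so applying the oplax geometric nerve gives $\nabla(b_j\!\downarrow\!F')\cong\nabla\A'\times_{\nabla\B}\nabla(b_j\!\downarrow\!\B)$, whence the fibre of $\nabla P'$ over a simplex $\x':[n]\to\A'$ is $\nabla(b_j\!\downarrow\!F'\x')$ with $F'\x':[n]\to\B$. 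Since $[n]$ has terminal object $n$, Lemma \ref{lemma1}$(ii)$ contracts this fibre, $\BB(b_j\!\downarrow\!F'\x')\simeq\BB(b_j\!\downarrow\!F'\x'(n))\simeq\BB\B(b_j,F'\x' n)$, and $p^*$, being a map over $\A'$, acts on each fibre as precomposition $\B(b_1,F'\x' n)\to\B(b_0,F'\x' n)$, a homotopy equivalence by $(v)$. Feeding this fibrewise equivalence into the homotopy-colimit decomposition of $|\nabla(-)|$ over the simplex category recalled before Lemma \ref{simlem} (or into Lemma \ref{simlem} itself) yields $\BB(b_1\!\downarrow\!F')\simeq\BB(b_0\!\downarrow\!F')$ via $p^*$, so $F'$ has $\mathrm{B}_o$ and the cycle closes. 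The main obstacle is exactly this last assembly: making the fibrewise contractions of Lemma \ref{lemma1} sufficiently natural in $\x'$ to form an honest map of homotopy-colimit diagrams compatible with $p^*$, which is where the homotopy-naturality of the nerve comparisons in Lemma \ref{facts2} is needed.
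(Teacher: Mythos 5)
Your proposal is correct, and most of it coincides with the paper's own proof: the equivalences among $(i)$, $(ii)$, $(iii)$, the bridges $(ii)\Rightarrow(iv)$ and $(iii)\Rightarrow(v)$ via the identification $b\!\downarrow\!b'\cong\B(b,b')$, and the equivalences $(iv)\Leftrightarrow(vi)\Leftrightarrow(v)$ are obtained there exactly as you obtain them, by specializing Theorem \ref{mainTheorem} to point functors. The genuine divergence is the upward bridge. The paper closes the cycle purely formally: assuming $(iv)$, every $b:[0]\to\B$ has property $\mathrm{B}_l$, so by Theorem \ref{mainTheorem}$(ii)$ the square $(\ref{hpqsquare})$ is a homotopy pullback for every $F=b$ and every oplax $F'$, and Theorem \ref{mainTheorem}$(iii)$ then says that $F'$ has $\mathrm{B}_o$; thus $(iv)\Rightarrow(iii)$ in two lines (and $(v)\Rightarrow(ii)$ dually, left to the reader). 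You instead prove $(v)\Rightarrow(iii)$ by hand, reopening the geometric-nerve machinery (Lemma \ref{pullbackLemma}$(ii)$, fibres of $\nabla P'$ over simplices of $\nabla\A'$, the contractions of Lemma \ref{lemma1}$(ii)$, the homotopy-colimit decomposition); this in effect re-runs, for the pairs $(b_j,F')$, the argument that the paper has already packaged into Lemma \ref{lemma2} and hence into Theorem \ref{mainTheorem}. Your route is sound, but it buys nothing: the same two-step application of Theorem \ref{mainTheorem} that you yourself use for $(vi)\Rightarrow(iv)$ gives $(v)\Rightarrow(ii)$ (hence $(v)\Rightarrow(iii)$ through $(i)$) with no new geometry, so the ``heart of the theorem'' is already encapsulated in Theorem \ref{mainTheorem} rather than needing a fresh fibration argument. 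Finally, the ``main obstacle'' you flag is illusory: you do not need the fibrewise contractions of Lemma \ref{lemma1} to be natural in $\x'$, because the map of homotopy-colimit diagrams is induced by $p^*$ itself, which commutes strictly with the projections to $\A'$; the contractions serve only, fibre by fibre, to certify that each map $|\nabla(b_1\!\downarrow\!F'\x')|\to|\nabla(b_0\!\downarrow\!F'\x')|$ is a weak equivalence, and homotopy invariance of the homotopy colimit requires no naturality of those certificates (Lemma \ref{facts2} is still needed, but rather to transfer the resulting equivalence from $|\nabla(-)|$ to $\BB(-)$).
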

\begin{proof} The implications $(i)\Leftrightarrow (ii)\Leftrightarrow
  (iii)$ are all direct consequences of Theorem \ref{mainTheorem}. For
  the remaining implications, let us take into account that, for any
  objects $b,b'\in\B$ there is quite an obvious isomorphism of
  categories $b\!\downarrow\!b'\cong \B(b,b')$. With this
  identification in mind, we see that the homomorphism $b:[0]\to \B$ has
  the property B$_l$ (resp. B$_o$) if and only if, for any $1$-cell
  $p:b_0\to b_1$ in $\B$, the functor $p_*:\B(b,b_0) \to \B(b, b_1)$
  (resp. $p^*:\B(b_1,b) \to \B(b_0, b)$) induces a homotopy
  equivalence on classifying spaces. Therefore, the implications
  $(ii)\Rightarrow (iv)$ and $(iii)\Rightarrow (v)$ are clear.

  Furthermore, we see that the square in $(vi)$ identifies the
   square
   $$
\xymatrix@C=20pt@R=20pt{
\BB(b\!\downarrow\!b')\ar@{}@<30pt>[d]|(.45){\Rightarrow}\ar[r]^-{\BB P'}\ar[d]_{\BB P}
&\BB [0]\ar[d]^{\BB b'}\\
\BB [0]\ar[r]^{\BB b}&\BB\B.}
$$
Then, for $b$ fixed, it follows from Theorem \ref{mainTheorem} that
the square in $(vi)$ is a homotopy pullback for any $b'$ if and only
if $b:[0]\to \B$ has the property B$_l$, that is, the equivalence of
statements $(vi)\Leftrightarrow (iv)$ holds.

Finally, to complete the proof, we are going to prove that
$(iv)\Rightarrow (iii)$ and we shall leave it to the reader the
proof that $(v)\Rightarrow (ii)$ since it is parallel. By
hypothesis, for any object $b\in\Ob\B$, the normal homomorphism  $b:[0]\to\B$
has the property B$_l$. Then, by Theorem \ref{mainTheorem} $(ii)$, for
any oplax functor $F':\A'\to \B$ the square
$$
\xymatrix@C=20pt@R=20pt{
\BB(b\!\downarrow\!F')\ar[r]^-{\BB P'}\ar[d]_{\BB P}\ar@{}@<25pt>[d]|{\Rightarrow}
&\BB\A'\ar[d]^{\BB F'}\\
\BB [0]\ar[r]^-{\BB b}&\BB\B}
$$
is a homotopy pullback for any object $b\in\B$. Therefore, by Theorem \ref{mainTheorem} $(iii)$, $F'$ has the property B$_o$.
\end{proof}

We can state that

\begin{itemize}
\item[(B)]  {\em a bicategory $\B$ has the property } B if it has the
    properties in Theorem {\em \ref{xbs}}.
\end{itemize}

\vspace{0.2cm}For example, {\em bigroupoids}, that is, bicategories
whose 1-cells are invertible up to a 2-cell, and whose 2-cells are
strictly invertible, have the property B: If $\B$ is any bigroupoid,
for any object $b$ and $1$-cell $p:b_0\to b_1$ in $\B$, the functor
$p^*:\B(b_1,b) \to \B(b_0, b)$ is actually an equivalence of
categories and, therefore, induces a homotopy equivalence on
classifying spaces $\BB p^*:\BB\B(b_1,b)\simeq \BB\B(b_0,b)$. Recall
that, by the correspondence $\B\mapsto \BB\B$, bigroupoids
correspond to homotopy 2-types, that is, CW-complexes whose
$n^{\mathrm{th}}$ homotopy groups at any base point vanish for
$n\geq 3$ (see Duskin ~\cite[Theorem 8.6]{Duskin2002}).

\begin{corollary}\label{corome}
If a bicategory  $\B$ has the property $\mathrm{B}$, then, for any
object $b\in \B$, there is a  homotopy equivalence
\begin{equation}\label{loo1}
\Omega(\BB \B,\BB b)\simeq \BB\B(b,b)
\end{equation}
between the loop space of the classifying space of the bicategory
with base point $\BB b$ and the classifying space of the category of
endomorphisms of $b$ in $\B$.
\end{corollary}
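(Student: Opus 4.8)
The plan is to read off the statement as the diagonal case $b'=b$ of the path-space characterization $(vi)$ in Theorem \ref{xbs}. The first step is to identify the relevant homotopy-fibre product. For the two constant maps $\BB b:\mathrm{pt}\to\BB\B$ (both sending the point to $\BB b$), the homotopy-fibre product is, by definition,
$$
\mathrm{pt}\times^{_\mathrm{h}}_{\BB\B}\mathrm{pt}=\{\gamma:I\to\BB\B\mid \gamma(0)=\BB b,\ \gamma(1)=\BB b\},
$$
which is precisely the based loop space $\Omega(\BB\B,\BB b)$.

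Next, since $\B$ has the property $\mathrm{B}$, statement $(vi)$ of Theorem \ref{xbs} holds for every pair of objects of $\B$. Applying it to the pair $(b,b)$ shows that the whisker map
$$
\BB\B(b,b)\longrightarrow \{\gamma:I\to\BB\B\mid \gamma(0)=\BB b,\ \gamma(1)=\BB b\}=\Omega(\BB\B,\BB b)
$$
is a homotopy equivalence, which is exactly the asserted equivalence $(\ref{loo1})$.

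There is essentially no obstacle here, since the corollary is a direct specialization of Theorem \ref{xbs}; the only point worth recording is the elementary identification of the homotopy-fibre product of two copies of a basepoint inclusion with the corresponding loop space. One could equally phrase the argument through the isomorphism of categories $b\!\downarrow\!b'\cong\B(b,b')$ used in the proof of Theorem \ref{xbs}, combined with the contractibility of $\BB[0]$ and the homotopy pullback square in $(vi)$, but this adds nothing beyond the specialization above.
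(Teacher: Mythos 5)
Your proposal is correct and coincides with the paper's own (implicit) argument: the corollary is stated without separate proof precisely because it is the case $b'=b$ of statement $(vi)$ of Theorem \ref{xbs}, once one observes that the path space $\{\gamma:I\to \BB\B \mid \gamma(0)=\BB b,\ \gamma(1)=\BB b\}$ is exactly $\Omega(\BB\B,\BB b)$. Your identification of this path space with the homotopy-fibre product of the two basepoint inclusions is the same elementary observation the paper relies on, so there is nothing to add.
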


The above homotopy equivalence is already known when the bicategory
is strict, that is, when $\B$ is a 2-category. It appears as a main
result in the paper by Del Hoyo ~\cite[Theorem 8.5]{Hoyo2012}, and
it was also stated at the same time by the first author in
~\cite[Example 4.4]{Cegarra2011}. Indeed, that homotopy equivalence
$(\ref{loo1})$, for the case when $\B$ is a 2-category, can be
deduced from a result by Tillmann about simplicial categories in
~\cite[Lemma 3.3]{Tillmann1997}.

\subsection{The case when both functors are lax.} For a diagram
$\xymatrix@C=18pt{\A\ar[r]^-{F} & \B
  &\C\ar[l]_(.4){G}}$, where both $F$ and $G$ are lax
functors, the comma bicategory $F\!\downarrow\!G$ is not defined
(unless $G$ is a homomorphism). However, we can obtain a
bicategorical model for the homotopy pullback of the induced maps
$\xymatrix@C=18pt{\BB\A\ar[r]^-{\BB F} & \BB\B &\BB\C\ar[l]_(.4){\BB
    G}}$ as follows: Let
$$
F\!\downarrow_{_2}\!G := F\!\downarrow\!P'
$$
be the comma bicategory defined as in $(\ref{ff'})$ by the diagram
$\A\overset{F}\longrightarrow\B\overset{P'}\longleftarrow
G\!\downarrow\!\B$, where $P'$ is the projection 2-functor
$(\ref{PP'})$ (the notation is taken from Dwyer, Kan, and Smith in
~\cite{DKS1989} and Barwick and Kan in ~\cite{BK2011,BK2013}). Thus,
$F\!\downarrow_{_2}\!G$ has 0-cells tuples $(a,f,b,g,c)$, where
$Fa\overset{f}\to b\overset{\,g}\leftarrow Gc$ are 1-cells of $\B$.
A 1-cell $$(u,\beta,p,\beta',v): (a_0,f_0,b_0,g_0,c_0)\to
(a_1,f_1,b_1,g_1,c_1)$$ in $F\!\downarrow_{_2}\!G$ consists of
1-cells $u:a_0\to a_1$, $p:b_0\to b_1$, and $v:c_0\to c_1$, in $\A$,
$\B$, and $\C$, respectively, together with 2-cells $\beta$ and
$\beta'$ of $\B$ as in the diagram
$$
\xymatrix{
Fa_0\ar[r]^{f_0}\ar[d]_{Fu}\ar@{}@<20pt>[d]|(.35){\beta}|(.5){\Leftarrow}
&b_0 \ar@{}@<20pt>[d]|(.35){\beta'}|(.5){\Rightarrow}
     \ar[d]_{p}
&\ar[l]_{g_0}Gc_0\ar[d]^{Gv}\\
Fa_1\ar[r]_{f_1}&b_1&\ar[l]^{g_1}Gc_1,}
$$
and a $2$-cell
$$\xymatrix@C=40pt{
(a_0,f_0,b_0,g_0,c_0)\ar@/^1.2pc/[r]^{(u,\beta,p,\beta',v)}
                    \ar@{}[r]|(.5){\Downarrow(\alpha,\delta,\rho)}
\ar@/_1.2pc/[r]_{(\bar{u},\bar{\beta},\bar{p},\bar{\beta}',\bar{v})}
&(a_1,f_1,b_1,g_1,c_1),}
$$
is given by $2$-cells $\alpha:u\Rightarrow \bar{u}$ in $\A$,
$\delta:p\Rightarrow \bar{p}$ in $\B$, and $\rho:v\Rightarrow
\bar{v}$ in $\C$, such that the diagrams below commute.
$$
\begin{array}{cc}
\xymatrix@C=35pt{
p\circ f_0\ar@2[r]^{\delta \circ 1}\ar@2[d]_{\beta}
  & \bar{p}\circ f_0\ar@2[d]^{\bar{\beta}}
\\ f_1\circ Fu\ar@2[r]^{1\circ\, F\!\alpha} & f_1\circ F\bar{u}}&
\xymatrix@C=35pt{p\circ g_0\ar@2[r]^{\delta \circ 1}\ar@2[d]_{\beta'}
  & \bar{p}\circ g_0\ar@2[d]^{\bar{\beta}'}
\\ g_1\circ Gv\ar@2[r]^{1\circ\, G\!\rho} & g_1\circ G\bar{v}}
\end{array}
$$

There is a (non-commutative) square
\begin{equation}\label{pqfgsquare}\begin{array}{c}
\xymatrix{
F\!\downarrow_{_2}\!G\ar[r]^-{Q}\ar[d]_{P} &\C\ar[d]^{G}\\ \A\ar[r]^{F}&\B}
\end{array}
\end{equation}
where $P$ and $Q$ are projection 2-functors, which act on cells of
$F\!\downarrow_{_2}\!G$ by
$$
\xymatrix{
a_0\ar@/^8pt/[r]^{u}\ar@/_8pt/[r]_{\bar{u}}
       \ar@{}[r]|{\Downarrow\alpha}&a_1}
\xymatrix@C=12pt{&\ar@{|->}[l]_{P}}
 \xymatrix@C=3pc{
(a_0,f_0,b_0,g_0,c_0)\ar@/^1.2pc/[r]^{(u,\beta,p,\beta',v)}
          \ar@{}[r]|(.5){\Downarrow (\alpha,\delta,\rho)}
       \ar@/_1.2pc/[r]_{(\bar{u},\bar{\beta},\bar{p},\bar{\beta}',\bar{v})}
&(a_1,f_1,b_1,g_1,c_1)}
\xymatrix@C=12pt{\ar@{|->}[r]^{Q}&}
\xymatrix{
c_0\ar@/^8pt/[r]^{v}\ar@/_8pt/[r]_{\bar{v}}\ar@{}[r]|{\Downarrow\rho}&c_1,}
$$
and we have the result given below.
\begin{theorem} Let $\xymatrix@C=18pt{\A\ar[r]^-{F} & \B
    &\C\ar[l]_(.4){G}}$ be a diagram where $F$ and $G$ are lax
  functors.

$(i)$ There is a homotopy $\BB F\, \BB P\Rightarrow \BB G\,\BB Q$ so
that the square below, which is induced by
   $(\ref{pqfgsquare})$ on classifying spaces, is homotopy commutative.
$$
\xymatrix{
\BB(F\!\downarrow_{_2}\!G)\ar@{}@<30pt>[d]|(.45){\Rightarrow}\ar[r]^-{\BB Q}\ar[d]_{\BB P}
&\BB \C\ar[d]^{\BB G}\\ \BB \A\ar[r]^{\BB F}&\BB \B}
$$

$(ii)$ The square above is a homotopy pullback whenever $F$ or $G$
has property $\mathrm{B}_l$.
\end{theorem}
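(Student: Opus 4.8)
The plan is to reduce both parts to Theorem \ref{mainTheorem} and Corollary \ref{bacor} through the defining identity $F\!\downarrow_{_2}\!G=F\!\downarrow\!P'$, where $P'\colon G\!\downarrow\!\B\to\B$ is the projection $2$-functor of $(\ref{PP'})$; since a $2$-functor is in particular an oplax functor, this comma is legitimate and Theorem \ref{mainTheorem} applies to the diagram $\A\overset{F}\to\B\overset{P'}\leftarrow G\!\downarrow\!\B$. Write $R\colon F\!\downarrow_{_2}\!G\to G\!\downarrow\!\B$ for the second projection of this comma and $P_\C\colon G\!\downarrow\!\B\to\C$ for the first projection of $G\!\downarrow\!\B=G\!\downarrow\!1_\B$; then the projection $P$ of the theorem is the first projection of $F\!\downarrow\!P'$, while $Q=P_\C\,R$.

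For part $(i)$, Theorem \ref{mainTheorem}$(i)$ furnishes a homotopy $\BB F\,\BB P\Rightarrow\BB P'\,\BB R$. On the other side, the lax transformation $\omega'\colon G\,P_\C\Rightarrow P'$ of $G\!\downarrow\!\B$ --- the exact analogue of the transformation $\omega'$ used in the proof of Theorem \ref{mainTheorem}, with component the $1$-cell $g\colon Gc\to b$ at each object $(c,g,b)$ --- induces, by Lemma \ref{fact1}$(ii)$ and precomposition with $R$, a homotopy $\BB G\,\BB Q=\BB G\,\BB P_\C\,\BB R\Rightarrow\BB P'\,\BB R$. Concatenating the first homotopy with the reverse of the second yields the required homotopy $\BB F\,\BB P\Rightarrow\BB G\,\BB Q$, so the square is homotopy commutative.

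For part $(ii)$, I would paste the main square of $F\!\downarrow\!P'$ onto a second square that is always a homotopy pullback:
$$
\xymatrix@C=32pt@R=20pt{
\BB(F\!\downarrow_{_2}\!G)\ar[r]^-{\BB R}\ar[d]_{\BB P}
 & \BB(G\!\downarrow\!\B)\ar[r]^-{\BB P_\C}\ar[d]|{\BB P'}
 & \BB\C\ar[d]^{\BB G}\\
\BB\A\ar[r]^-{\BB F} & \BB\B\ar[r]^-{=} & \BB\B.}
$$
The left-hand square is the square $(\ref{pqsquare})$ of the comma $F\!\downarrow\!P'$ realized on classifying spaces. The right-hand square is the transpose of the square $(\ref{pqsquare})$ for $G\!\downarrow\!1_\B$, whose vertical maps $\BB P_\C$ and $\BB 1_\B$ are homotopy equivalences --- the first by Corollary \ref{bacor}$(i)$ --- so it is a homotopy pullback. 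As the top composite is $\BB P_\C\,\BB R=\BB Q$ and the bottom composite is $\BB F$, the outer rectangle is exactly the square of the theorem; by the two-out-of-three property it is a homotopy pullback if and only if the left-hand square is. Finally, Theorem \ref{mainTheorem}$(ii)$ shows the left-hand square to be a homotopy pullback as soon as $F$ has the property $\mathrm{B}_l$ (or $P'$ has $\mathrm{B}_o$), which disposes of the case in which $F$ has $\mathrm{B}_l$.

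To reach the remaining case, in which $G$ has $\mathrm{B}_l$, I would appeal to symmetry rather than argue directly. The explicit description of $F\!\downarrow_{_2}\!G$ --- its cells, its horizontal composition, and its unit and associativity constraints --- is entirely symmetric in the two legs $Fa\overset{f}\to b\overset{g}\leftarrow Gc$, so the relabelling $(a,f,b,g,c)\mapsto(c,g,b,f,a)$ should be an isomorphism of bicategories $F\!\downarrow_{_2}\!G\cong G\!\downarrow_{_2}\!F$ interchanging the projections $P$ and $Q$. Granting this, the square of the theorem for $(F,G)$ is the transpose of the one for $(G,F)$; since transposing preserves homotopy pullbacks, the previous paragraph applied to $(G,F)$ shows our square to be a homotopy pullback whenever $G$ has $\mathrm{B}_l$. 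I expect the main obstacle to lie precisely in verifying this last isomorphism: one must check that the $\circledcirc$-pasting and the constraints of $F\!\downarrow\!P'$ involve the structure $2$-cells $\widehat F$ and $\widehat G$ symmetrically (the contribution of $P'$ being trivial, as $P'$ is a $2$-functor). Should the symmetry argument be deemed unsatisfactory, the alternative is to prove directly that $G$ having $\mathrm{B}_l$ forces $P'$ to have $\mathrm{B}_o$, by producing homotopy equivalences $\BB(b\!\downarrow\!P')\simeq\BB(G\!\downarrow\!b)$ compatible with the transition maps $p^{*}$ and $p_{*}$; this route is considerably more laborious.
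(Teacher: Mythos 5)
Your proposal is correct and takes essentially the same route as the paper: for part (ii) the paper likewise reduces, via the symmetry isomorphism $F\!\downarrow_{_2}\!G\cong G\!\downarrow_{_2}\!F$ (which it asserts without verification, so your worry there is no real obstacle), to the case where $F$ has property $\mathrm{B}_l$, and then pastes the homotopy pullback square of Theorem \ref{mainTheorem}(ii) for the pair $(F,P')$ against exactly your right-hand square relating $\BB(F\!\downarrow_{_2}\!G)$, $\BB(G\!\downarrow\!\B)$, $\BB\C$ and $\BB\B$. The only cosmetic difference is the justification of that right-hand square: you invoke Corollary \ref{bacor}(i) together with the two-parallel-homotopy-equivalences criterion, whereas the paper cites it as an instance of $(\ref{hpqsquare})$ for $(G,1_\B)$ via Theorem \ref{mainTheorem}.
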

\begin{proof} The part $(i)$ follows from  Theorem \ref{mainTheorem} $(i)$ and the definition of $F\!\downarrow_{_2}\!G$. For the part $(ii)$, since $F\!\downarrow_{_2}\!G \cong
  G\!\downarrow_{_2}\!F$, it is enough, by symmetry, to prove the
  theorem when $F$ has the property B$_l$. In this case, we have the
  homotopy commutative diagram
$$\xymatrix@R=25pt@C=20pt{
\BB(F\!\downarrow_{_2}\!G)
\ar@{}[rd]|(.46){\mathrm{(\ref{hpqsquare})}}
\ar@{}@<13pt>[rr]|{=}
\ar[d]_{\BB P}
\ar@/^2pc/[rr]^{\BB Q}\ar[r]^{\BB P'}& \BB(G\!\downarrow\!\B)
\ar@{}[rd]|(.46){\mathrm{(\ref{hpqsquare})}}
\ar[d]_{\BB P'}\ar[r]^{\BB P}&\BB\C\ar[d]^{\BB G}\\
\BB\A \ar@{}@<-12pt>[rr]|{=}\ar@/_1.7pc/[rr]_{\BB F}\ar[r]^{\BB F}
&\BB\B\ar[r]^{\BB 1_\B}&\BB\B
}
$$
where, by Theorem \ref{mainTheorem}, the inner squares
$(\ref{hpqsquare})$ are both homotopy pullback. Then, the outside
square is also a homotopy pullback, as claimed.
\end{proof}

\section{Homotopy pullbacks of monoidal categories.}
\label{monoidalCategories}

Recall ~\cite{Saavedra1972,MacLane1998} that a {\em monoidal
category} $(\M,\otimes)=
(\M,\otimes,\text{I},\boldsymbol{a},\boldsymbol{l},\boldsymbol{r})$
consists of a category $\M$ equipped with a 
tensor product $\otimes:\M\times\M\to\M$, a unit object $\text{I}$,
and natural and coherent isomorphisms $\aso:(m_3\otimes m_2)\otimes
m_1\cong m_3\otimes (m_2\otimes m_1)$,
$\boldsymbol{l}:\text{I}\otimes m\cong m$, and ${\br:m\otimes
\text{I}\cong m}$. Any monoidal category $(\M,\otimes)$ can be
viewed as a bicategory $\Sigma\M$ with only one object, say $*$, the
objects $m$ of $\M$ as 1-cells $m:*\rightarrow *$, and the morphisms
of $\M$ as 2-cells. Thus, $\Sigma\M(*,*)=\M$, and the horizontal
composition of cells is given by the tensor functor. The identity at
the object is $1_*=\text{I}$, the unit object of the monoidal
category, and the associativity, left unit and right unit
constraints for $\Sigma\M$ are precisely those of the monoidal
category, that is, $\boldsymbol{a}$, $\boldsymbol{l}$, and
$\boldsymbol{r}$, respectively. Furthermore, a monoidal functor
$F=(F,\widehat{F}):(\N,\otimes)\to (\M,\otimes)$ amounts precisely
to a homomorphism $\Sigma F:\Sigma\N\to \Sigma\M$.

 For any monoidal category $(\M,\otimes)$, the Grothendieck nerve
 $(\ref{GrothendieckNerve})$ of the bicategory $\Sigma\M$ is exactly
 the pseudo-simplicial category that the monoidal category defines by
 the reduced bar construction (see Jardine ~\cite[Corollary
 1.7]{Jardine1991}), whose category of $p$-simplices is $\M^p$, the
 $p$-fold power of the underlying category $\M$. Therefore, the {\em
   classifying space of the monoidal category} $\BB(\M,\otimes)$
 ~\cite[\S 3]{Jardine1991} is the same as the classifying space
 $\BB\Sigma\M$ of the one-object bicategory it defines ~\cite{BC2004},
 and thus the bicategorical results  obtained above are
 applicable to monoidal functors between monoidal categories. This,
 briefly, can be done as follows:

Given any diagram  $\xymatrix@C=18pt{(\N,\otimes)\ar[r]^-{F} &
(\M,\otimes) &(\N',\otimes)\ar[l]_(.4){F'}}$, where $F$ and $F'$ are
monoidal functors between monoidal categories,  the  ``{\em homotopy- fibre product bicategory}"
\begin{equation}
F\!\overset{_{\otimes}}\downarrow\!F'
\end{equation}
(the notation $\overset{_{\otimes}}\downarrow$ is to avoid confusion with the comma category $F\!\downarrow\!F'$ of the underlying functors) has as $0$-cells the
objects $m\in \M$. A 1-cell $(n,f,n'):m_0\to m_1$ of
$F\!\overset{_{\otimes}}\downarrow\!F'$ consists of objects $n\in \N$ and $n'\in \N'$,
and a morphism $f: F'n'\otimes m_0 \to m_1\otimes Fn$ in $\B$. A
$2$-cell in $F\!\overset{_{\otimes}}\downarrow\!F'$,
$$
\xymatrix@C=40pt{m_0\ar@/^1pc/[r]^{(n,f,n')}
    \ar@{}[r]|(.5){\Downarrow(u,u')}
   \ar@/_1pc/[r]_{(\bar{n},\bar{f},\bar{n}')}&m_1,}
$$
is given by a pair of morphisms, $u:n\to \bar{n}$ in $\N$ and
$u':n'\to \bar{n}'$ in $\N'$, such that the
 diagram below commutes.
$$
\xymatrix@C=35pt@R=20pt{F'n'\otimes m_0\ar[r]^{F'\!u'\otimes\, 1}\ar[d]_{f}
  & F'\bar{n}'\otimes m_0\ar[d]^{\bar{f}}
\\ m_1\otimes Fn\ar[r]^{1\,\otimes F\!u} & m_1\circ F\bar{n}}
$$
The vertical composition of $2$-cells is given by the composition of morphisms in
$\N$ and $\N'$. The horizontal composition of the  1-cells 
$\xymatrix@C=40pt{m_0\ar[r]^{(n_1,f_1,n'_1)}&m_1\ar[r]^{(n_2,f_2,n'_2)}
& m_2}$ is the 1-cell
$$
(n_2\otimes n_1, f_2\circledcirc f_1,n'_2\otimes n'_1):m_0\to m_2,
$$
$$
\begin{array}{cl}
f_2\circledcirc f_1=\big(&\hspace{-0.3cm}F'(n'_2\otimes n'_1)\otimes m_0
\overset{\widehat{F}'^{-1}\otimes 1}\cong
 (F'n'_2\otimes F'n'_1)\otimes m_0 \overset{\aso} \cong
 F'n'_2\otimes (F'n'_1\otimes m_0) \overset{1\otimes f_1}\longrightarrow\\
 &\hspace{-0.3cm}F'n'_2\otimes (m_1\otimes Fn_1)\overset{\aso^{-1}}\cong
 (F'n'_2\otimes m_1)\otimes Fn_1  \overset{f_2\otimes 1}\longrightarrow
 (m_2\otimes Fn_2)\otimes Fn_1 \overset{\aso}\cong\\
 &\hspace{-0.3cm}m_2\otimes (Fn_2\circ Fn_1)\overset{1\otimes
   \widehat{F}}
\cong   m_2\otimes F(n_2\otimes n_1)\big),
\end{array}
$$
and the horizontal composition of $2$-cells is given by the tensor
product of morphisms in $\N$ and $\N'$.  The identity $1$-cell, at
any 0-cell $m$, is $(\text{I},\overset{_\circ}{1}_{m},\text{I}):m\to
m$, where
$$ \overset{_\circ}{1}_{m}=\big(F'\text{I}\otimes
m \overset{\widehat{F}'^{-1}\otimes 1}\cong \text{I}\otimes
m\overset{\bl}\cong m \overset{\br^{-1}}\cong m\otimes
\text{I}\overset{1\otimes \widehat{F}}\cong m\otimes
F\text{I}\big).$$ The associativity, right, and left unit constraints
of the bicategory $F\!\overset{_{\otimes}}\downarrow \!F'$ are provided by those of $\N$
and $\N'$ by the formulas
$$
\aso_{(n_3,f_3,n'_3),(n_2,f_2, n'_2),(n_1,f_1,n'_1)}\!=
\!(\aso_{n_3,n_2,n_1},\aso_{n'_3,n'_2,n'_1})
,\,
\bl_{(n,f,n')}\!=\!(\bl_n,\bl_{n'}),\ \br_{(n,f,n')}\!=\!(\br_n,\br_{n'}).
$$

\begin{remark} {\em
  Let us stress that $F\!\overset{_{\otimes}}\downarrow \!F'$ is not a monoidal category
  but a genuine bicategory, since it generally has more than one
  object.
  }
\end{remark}

In particular, for any monoidal functor $F:(\N,\otimes)\to
(\M,\otimes)$, we have the {\em homotopy-fibre bicategories} (cf.
~\cite{BC2003})
\begin{equation}F\!\overset{_{\otimes}}\downarrow \!\text{I},
  \hspace{0.3cm}\text{I}\!\overset{_{\otimes}}\downarrow \!F
\end{equation}
where we denote by $\text{I}:([0],\otimes)\to (\M,\otimes)$ the
monoidal functor that carries $0$ to the unit object $\text{I}$, and
whose structure isomorphism is
$\bl_{\text{I}}=\br_{\text{I}}:\text{I}\otimes\text{I}\cong
\text{I}$. Every object $m\in \M$ determines $2$-endofunctors
$$
m\otimes -:F\!\overset{_{\otimes}}\downarrow \!\text{I} \to F\!\overset{_{\otimes}}\downarrow \!\text{I},
 \hspace{0.4cm} -\otimes m:
\text{I}\!\overset{_{\otimes}}\downarrow \!F\to \text{I}\!\overset{_{\otimes}}\downarrow \!F,
$$
respectively given on cells by
$$
\xymatrix@C=1.8pc{
m_0\ar@/^10pt/[r]^{(n,f)}\ar@/_10pt/[r]_{(\bar{n},\bar{f})}
\ar@{}[r]|{\Downarrow u}
&m_1}
\xymatrix@C=16pt{\ar@{|->}[r]^{m\otimes -}&}
\xymatrix@C=1.8pc{m\!\otimes m_0\ar@/^11pt/[r]^{(n, m\odot f)}
\ar@/_11pt/[r]_{(\bar{n},m\odot \bar{f})}
\ar@{}[r]|{\Downarrow u}&
m\!\otimes m_1,}\
\xymatrix@C=2pc{
m_0\ar@/^11pt/[r]^{(g,n')}\ar@/_11pt/[r]_{(\bar{g},\bar{n}')}
\ar@{}[r]|{\Downarrow u'}
&m_1}
\xymatrix@C=16pt{\ar@{|->}[r]^{-\otimes m}&}
\xymatrix@C=1.8pc{m_0\!\otimes m\ar@/^10pt/[r]^{(g\odot m, n')}
\ar@/_10pt/[r]_{(\bar{g}\odot m,\bar{n}')}
\ar@{}[r]|{\Downarrow u'}&
m_1\!\otimes m,}
$$
where, for any $(n,f):m_0\to m_1$ in $F\!\overset{_{\otimes}}\downarrow\!\text{I}$ and
$(g,n):m_0\to m_1$ in $\text{I}\!\overset{_{\otimes}}\downarrow\!F$,

\noindent $$ m\odot f\!=\!\big(\text{I}\otimes (m\otimes
m_0)\overset{\bl}\cong m\otimes m_0\! \overset{1\otimes
\bl^{-1}}\cong\! m\otimes (\text{I}\otimes m_0) \overset{1\otimes
f}\longrightarrow m\otimes (m_1\otimes
Fn)\!\overset{\aso^{-1}}\cong\! (m\otimes m_1)\otimes Fn\big),$$

\noindent $$ g\odot m=\big(Fn\otimes (m_0\otimes
m)\!\overset{\aso^{-1}}\cong \!(Fn\otimes m_0)\otimes m\!
\overset{g\otimes 1}\longrightarrow (m_1\otimes \text{I} )\otimes m
\overset{\br\otimes 1}\cong m_1\otimes m\!\overset{\br^{-1}}\cong
(m_1\otimes m)\otimes \text{I}\big).$$

\vspace{0.2cm} We state that
\begin{itemize}
\item[(B$_l$)] {\em the monoidal functor $F$ has the property} B$_l$
    if, for any object $m\in \M $, the induced map $\BB(m\otimes
  -): \BB(F\!\overset{_{\otimes}}\downarrow\!\text{I}) \to
  \BB(F\!\overset{_{\otimes}}\downarrow\!\text{I})$ is a homotopy
    autoequivalence.

\item[(B$_o$)] {\em the monoidal functor $F$ has the property} B$_o$
    if, for any object $m\in \M $, the induced map $\BB(-\otimes
  m): \BB(\text{I}\!\overset{_{\otimes}}\downarrow\!F) \to
  \BB(\text{I}\!\overset{_{\otimes}}\downarrow\!F)$ is a homotopy autoequivalence.
\end{itemize}

Our main result here is a direct consequence of Theorem
\ref{mainTheorem}, after taking into account the identifications
$\BB(\M,\otimes)=\BB\Sigma\M$, $\BB F=\BB\Sigma F$,
$F\!\overset{_{\otimes}}\downarrow\!F'= \Sigma F\!\downarrow\!\Sigma F'$,
$F\!\overset{_{\otimes}}\downarrow\!\text{I}= \Sigma F\!\downarrow\!*$, and
$\text{I}\!\overset{_{\otimes}}\downarrow\!F= *\!\downarrow\!\Sigma F$, and the fact
that a monoidal functor has the property B$_l$ or B$_o$ if and only
if the homomorphism $\Sigma F$ has that property. This result is as
given below.

\begin{theorem}\label{maintheomon}
$(i)$ Suppose  $\xymatrix@C=18pt{(\N,\otimes)\ar[r]^-{F} &
(\M,\otimes) &(\N',\otimes)\ar[l]_(.4){F'}}$ are monoidal functors
between monoidal categories, such that  $F$ has the property
$\mathrm{B}_l$ or $F'$ has the property $\mathrm{ B}_o$. Then, there
is an induced homotopy pullback square
\begin{equation}\label{hpqsquaremon}\begin{array}{c}
\xymatrix@C=20pt@R=20pt{
\BB(F\!\overset{_{\otimes}}\downarrow\!F')\ar@{}@<35pt>[d]|(.4){\Rightarrow}\ar[r]^-{\BB P'}\ar[d]_{\BB P}
&\BB(\N',\otimes)\ar[d]^{\BB F'}\\
\BB (\N,\otimes)\ar[r]^{\BB F}&\BB(\M,\otimes).}
\end{array}
\end{equation}
Therefore, there is an induced Mayer-Vietoris type long exact
sequence on homotopy groups, based at the $0$-cells $\BB *$ of
$\BB(\M,\otimes)$, $\BB(\N,\otimes)$, and $\BB(\N',\otimes)$
respectively, and the $0$-cell $\BB\mathrm{I}\in
\BB(F\!\overset{_{\otimes}}\downarrow\!F')$,
$$
\xymatrix@C=12pt{\cdots\to \pi_{n+1}\BB(\M,\otimes)\ar[r]&
\pi_n\BB(F\!\overset{_{\otimes}}\downarrow\!F')\ar[r]&
\pi_n\BB(\N,\otimes)\times\pi_n\BB(\N',\otimes)
\ar[r]&\pi_n\BB(\M,\otimes) \to}
$$
$$
\xymatrix@C=12pt{
\cdots\to \pi_1\BB(F\!\overset{_{\otimes}}\downarrow\!F')\ar[r]
&\pi_1\BB(\N,\otimes)\times\pi_1\BB
(\N',\otimes)
\ar[r]&\pi_1\BB(\M,\otimes)\ar[r]&\pi_0\BB(F\!\overset{_{\otimes}}\downarrow\!F')\ar[r]&0.}
$$

$(ii)$  Given a monoidal functor $F:(\N,\otimes)\to (\M,\otimes)$,
if the square $(\ref{hpqsquaremon})$ is a homotopy pullback for
every monoidal functor $F':(\N',\otimes)\to (\M,\otimes)$, then $F$
has the property $\mathrm{B}_l$. Similarly, if $F'$ is a  monoidal
functor  such that the square $(\ref{hpqsquaremon})$ is a homotopy
pullback for any monoidal functor $F$, as above, then $F'$  has the
property $\mathrm{B}_o$.
\end{theorem}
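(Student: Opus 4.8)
The plan is to reduce everything to the bicategorical Theorem~\ref{mainTheorem} through the one-object bicategory construction $\Sigma$, exactly as announced in the paragraph preceding the statement. First I would record the relevant identifications. A monoidal functor $F$ yields a homomorphism $\Sigma F$, which is in particular both a lax and an oplax functor; and one has $\BB(\M,\otimes)=\BB\Sigma\M$, $\BB F=\BB\Sigma F$, $F\!\overset{_{\otimes}}\downarrow\!F'=\Sigma F\!\downarrow\!\Sigma F'$, $F\!\overset{_{\otimes}}\downarrow\!\mathrm{I}=\Sigma F\!\downarrow\!*$, and $\mathrm{I}\!\overset{_{\otimes}}\downarrow\!F=*\!\downarrow\!\Sigma F$, where $*$ denotes the unique object of $\Sigma\M$. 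Each of these is immediate from comparing the explicit cell-by-cell descriptions of the two comma constructions. Under them, the square $(\ref{hpqsquaremon})$ is precisely the bicategorical square $(\ref{hpqsquare})$ attached to the diagram $\Sigma\N\overset{\Sigma F}\to\Sigma\M\overset{\Sigma F'}\leftarrow\Sigma\N'$.

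The key verification, which I expect to be the main (if routine) point, is that a monoidal functor has property $\mathrm{B}_l$ (resp.\ $\mathrm{B}_o$) exactly when $\Sigma F$ has the corresponding bicategorical property. The crux is that $\Sigma\M$ has a single $0$-cell $*$, so that every $1$-cell $p\colon b_0\to b_1$ of $\Sigma\M$ is an endo-$1$-cell $*\to*$, i.e.\ simply an object $m\in\M$; moreover the $2$-functor $p_*\colon\Sigma F\!\downarrow\!*\to\Sigma F\!\downarrow\!*$ of $(\ref{ps})$, which post-composes with $p$, becomes under the tensor/horizontal-composition identification the endofunctor $m\otimes-$ on $F\!\overset{_{\otimes}}\downarrow\!\mathrm{I}$ (the pasting defining $p\odot\beta$ matching that defining $m\odot f$), and likewise $p^*$ becomes $-\otimes m$ on $\mathrm{I}\!\overset{_{\otimes}}\downarrow\!F$. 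Because source and target now coincide, the bicategorical demand that $\BB p_*$ be a homotopy equivalence is exactly the demand that $\BB(m\otimes-)$ be a homotopy autoequivalence, which is the monoidal property $\mathrm{B}_l$; and symmetrically for $\mathrm{B}_o$.

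With these preliminaries, part $(i)$ follows by applying Theorem~\ref{mainTheorem}$(ii)$ to $\Sigma F$ and $\Sigma F'$: the hypothesis that $F$ has $\mathrm{B}_l$ or $F'$ has $\mathrm{B}_o$ translates into $\Sigma F$ having $\mathrm{B}_l$ or $\Sigma F'$ having $\mathrm{B}_o$, whence $(\ref{hpqsquare})$, and thus $(\ref{hpqsquaremon})$, is a homotopy pullback. For the Mayer--Vietoris sequence I would transport the sequence of Theorem~\ref{mainTheorem}$(ii)$: the distinguished $0$-cell $(a,1,a')$ with $Fa=F'a'$ becomes $(*,1_*,*)$, and since $1_*=\mathrm{I}$ this is the base point $\BB\mathrm{I}$. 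The only modification is at the tail: as $\Sigma\N$ and $\Sigma\N'$ each have a single $0$-cell, the spaces $\BB(\N,\otimes)$ and $\BB(\N',\otimes)$ are path-connected, so $\pi_0(\BB(\N,\otimes)\times\BB(\N',\otimes))=0$ and the sequence terminates at $0$ as displayed.

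Finally, for part $(ii)$ I would invoke Theorem~\ref{mainTheorem}$(iii)$. The monoidal functor $\mathrm{I}\colon([0],\otimes)\to(\M,\otimes)$ corresponds under $\Sigma$ to the normal homomorphism $*\colon[0]\to\Sigma\M$, and $*$ is the \emph{only} object of $\Sigma\M$. Hence if $(\ref{hpqsquaremon})$ is a homotopy pullback for every monoidal $F'$, then taking $F'=\mathrm{I}$ shows the bicategorical square $(\ref{hpqsquare})$ is a homotopy pullback for $F'=b=*\colon[0]\to\Sigma\M$, which exhausts all $b\in\Ob(\Sigma\M)$; Theorem~\ref{mainTheorem}$(iii)$ then gives that $\Sigma F$ has $\mathrm{B}_l$, i.e.\ $F$ has $\mathrm{B}_l$. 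The symmetric argument, placing $\mathrm{I}$ in the first variable and using the second half of Theorem~\ref{mainTheorem}$(iii)$, yields the assertion for $\mathrm{B}_o$.
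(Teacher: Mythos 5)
Your proposal is correct and follows essentially the same route as the paper: the paper likewise derives Theorem \ref{maintheomon} directly from Theorem \ref{mainTheorem} via the identifications $\BB(\M,\otimes)=\BB\Sigma\M$, $F\!\overset{_{\otimes}}\downarrow\!F'=\Sigma F\!\downarrow\!\Sigma F'$, $F\!\overset{_{\otimes}}\downarrow\!\mathrm{I}=\Sigma F\!\downarrow\!*$, $\mathrm{I}\!\overset{_{\otimes}}\downarrow\!F=*\!\downarrow\!\Sigma F$, together with the fact that a monoidal functor has property $\mathrm{B}_l$ (resp.\ $\mathrm{B}_o$) if and only if $\Sigma F$ does. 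Your write-up simply makes explicit the details the paper leaves to the reader (the matching of $p\odot\beta$ with $m\odot f$, the base point $(*,1_*,*)=\BB\mathrm{I}$, the path-connectedness giving the terminal $0$, and the observation that $*$ is the unique $0$-cell so that $F'=\mathrm{I}$ exhausts the hypothesis of Theorem \ref{mainTheorem}$(iii)$), all of which are accurate.
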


Similarly, from Corollaries \ref{hfth} and \ref{tagen}, we get the
following extensions of Quillen's Theorems A and B to monoidal
functors:

\begin{theorem}\label{qmab} Let $F:(\N,\otimes)\to (\M,\otimes)$ be any monoidal
  functor.

$(i)$ If $F$  has the property $\mathrm{B}_l$, then there is an
induced homotopy fibre sequence {\em $$ \xymatrix@C=15pt{
\BB(F\!\overset{_{\otimes}}\downarrow\!\text{I})\ar[r] &\BB(\N,\otimes)\ar[r]&\BB
(\M,\otimes).}
$$}
$(ii)$ If $F$ has the property $\mathrm{B}_o$, then there is an
induced homotopy fibre sequence {\em $$ \xymatrix@C=15pt{
\BB(\text{I}\!\overset{_{\otimes}}\downarrow\!F)\ar[r] &\BB(\N,\otimes)\ar[r]&\BB
(\M,\otimes).}
$$}
$(iii)$ If the classifying space of any of the two homotopy-fibre
bicategories of $F$ is contractible, that is, if
{\em $\BB(F\!\overset{_{\otimes}}\downarrow\!\text{I})\simeq \mathrm{pt}$} or
{\em $\BB(\text{I}\!\overset{_{\otimes}}\downarrow\!F)\simeq \mathrm{pt}$},  then  the induced map on
classifying spaces $\BB F:\BB(\N,\otimes)\simeq \BB
(\M,\otimes)$ is a homotopy
equivalence.
\end{theorem}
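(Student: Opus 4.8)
The plan is to reduce everything to the bicategorical Theorems A and B already established, namely Corollaries \ref{hfth} and \ref{tagen}, via the one-object bicategory $\Sigma\M$ attached to a monoidal category $(\M,\otimes)$. The essential input is the collection of identifications recorded just before Theorem \ref{maintheomon}: $\BB(\M,\otimes)=\BB\Sigma\M$, $\BB F=\BB\Sigma F$, $F\!\overset{_{\otimes}}\downarrow\!\text{I}=\Sigma F\!\downarrow\!*$, and $\text{I}\!\overset{_{\otimes}}\downarrow\!F=*\!\downarrow\!\Sigma F$, where $*$ denotes the unique object of $\Sigma\M$, together with the fact that the monoidal functor $F$ has the property $\mathrm{B}_l$ (resp. $\mathrm{B}_o$) precisely when the homomorphism $\Sigma F$ has that property as a lax (resp. oplax) functor between bicategories.

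For part $(i)$ I would observe that if $F$ has the property $\mathrm{B}_l$ then $\Sigma F:\Sigma\N\to\Sigma\M$ is a lax functor with the property $\mathrm{B}_l$, so Corollary \ref{hfth}$(i)$ applied at the object $*\in\Sigma\M$ yields a homotopy fibre sequence $\BB(\Sigma F\!\downarrow\!*)\to\BB\Sigma\N\to\BB\Sigma\M$. Rewriting each term through the identifications above turns this into exactly the asserted sequence $\BB(F\!\overset{_{\otimes}}\downarrow\!\text{I})\to\BB(\N,\otimes)\to\BB(\M,\otimes)$. Part $(ii)$ is entirely parallel: property $\mathrm{B}_o$ for $F$ means $\Sigma F$, viewed as an oplax functor, has the property $\mathrm{B}_o$, so Corollary \ref{hfth}$(ii)$ at $*$ gives $\BB(*\!\downarrow\!\Sigma F)\to\BB\Sigma\N\to\BB\Sigma\M$, which under the same dictionary is the claimed sequence $\BB(\text{I}\!\overset{_{\otimes}}\downarrow\!F)\to\BB(\N,\otimes)\to\BB(\M,\otimes)$.

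For part $(iii)$ the crucial point, and the only place where a little care is needed, is that $\Sigma\M$ has a single object $*$, so the hypothesis of Corollary \ref{tagen}, which asks for contractibility of the homotopy-fibre bicategories over every object of the target, collapses to the contractibility of a single such bicategory. Thus if $\BB(F\!\overset{_{\otimes}}\downarrow\!\text{I})\simeq\mathrm{pt}$, then $\BB(\Sigma F\!\downarrow\!*)\simeq\mathrm{pt}$ is already the full hypothesis of Corollary \ref{tagen}$(i)$ for the lax functor $\Sigma F$, whence $\BB F=\BB\Sigma F$ is a homotopy equivalence; dually, if $\BB(\text{I}\!\overset{_{\otimes}}\downarrow\!F)\simeq\mathrm{pt}$ then $\BB(*\!\downarrow\!\Sigma F)\simeq\mathrm{pt}$ and Corollary \ref{tagen}$(ii)$ applies to the oplax functor $\Sigma F$, giving the same conclusion.

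I do not expect a substantive obstacle here, since all the homotopy-theoretic content resides in the bicategorical Corollaries \ref{hfth} and \ref{tagen}; the only thing to verify is the dictionary between the monoidal comma constructions $\overset{_{\otimes}}\downarrow$ and the bicategorical comma constructions $\downarrow$, which amounts to unwinding the definitions of $\Sigma F\!\downarrow\!*$ and $*\!\downarrow\!\Sigma F$ and matching their cells against those of $F\!\overset{_{\otimes}}\downarrow\!\text{I}$ and $\text{I}\!\overset{_{\otimes}}\downarrow\!F$. Since these identifications are already asserted in the text preceding Theorem \ref{maintheomon}, the argument is a direct transcription of the bicategorical statements to the one-object setting.
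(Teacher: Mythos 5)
Your proposal is correct and is essentially the paper's own argument: the paper derives Theorem \ref{qmab} from Corollaries \ref{hfth} and \ref{tagen} via the identifications $\BB(\M,\otimes)=\BB\Sigma\M$, $F\!\overset{_{\otimes}}\downarrow\!\text{I}=\Sigma F\!\downarrow\!*$, $\text{I}\!\overset{_{\otimes}}\downarrow\!F=*\!\downarrow\!\Sigma F$, and the equivalence of the properties $\mathrm{B}_l$, $\mathrm{B}_o$ for $F$ and $\Sigma F$. Your observation that the single object $*$ of $\Sigma\M$ collapses the ``for every object'' hypothesis of Corollary \ref{tagen} to one contractibility condition is exactly the point that makes part $(iii)$ work.
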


For the last statement in the following theorem, let us note that
there is a diagram of bicategories
\begin{equation}\label{ssqm}\begin{array}{c}
    \xymatrix@C=15pt@R=15pt{
\M\ar@{}@<20pt>[d]|(0.3){\gamma}|{\Rightarrow}\ar[r]\ar[d]
&[0]\ar[d]^{*}\\
      [0]\ar[r]^{*}&\Sigma \M}
\end{array}
\end{equation}
in which $\gamma$ is the lax transformation defined by $\gamma
m=m:*\to*$, for any object $m\in\M$, and whose naturality component
at a morphism $f:m_0\to m_1$, is the composite 2-cell
$\widehat{\gamma}_f=\big(\text{I}\otimes m_0\overset{\bl} \cong
m_0\overset{f}\Rightarrow m_1\overset{\br^{-1}}\cong m_1\circ
\text{I}\big)$. Then, we have an induced homotopy commutative square
on classifying spaces
$$
\xymatrix@C=15pt@R=15pt{
\BB \M \ar@{}@<25pt>[d]|(0.35){\BB\gamma}|{\Rightarrow}
\ar[r]\ar[d]&\mathrm{pt}\ar[d]^{*}\\ \mathrm{pt}\ar[r]^-{*}&\BB(\M,\otimes)
}
$$
and a corresponding whisker map
\begin{equation}\label{whism}
\BB\M\to \Omega(\BB(\M,\otimes),*).
\end{equation}

Theorem \ref{xbs} particularizes by giving

\begin{theorem} \label{xbsmon} The following properties of a monoidal
  category $(\M,\otimes)$ are equivalent:
\begin{itemize}
\item[$(i)$] For any diagram of monoidal functors
 $\xymatrix@C=18pt{(\N,\otimes)\ar[r]^-{F} & (\M,\otimes)
&(\N',\otimes)\ar[l]_(.4){F'}}$, the induced square
$(\ref{hpqsquaremon})$
$$
\xymatrix{
\BB(F\!\overset{_{\otimes}}\downarrow\!F')\ar@{}@<35pt>[d]|(.45){\Rightarrow}\ar[r]^-{\BB P'}\ar[d]_{\BB P}
&\BB(\N',\otimes)\ar[d]^{\BB F'}\\
\BB (\N,\otimes)\ar[r]^{\BB F}&\BB(\M,\otimes).}
$$
is a homotopy pullback.

\item[$(ii)$] Any monoidal funtor $F:(\N,\otimes)\to (\M,\otimes)$ has
  property $\mathrm{B}_l$.

\item[$(iii)$]Any monoidal functor $F:(\N,\otimes)\to (\M,\otimes)$
  has property $\mathrm{B}_o$.

\item[$(iv)$] For any object $m\in \M$, the functor $m\otimes-:\M \to
  \M$ induces a homotopy autoequivalence on the classifying
  space $\BB\M$.

\item[$(v)$] For any object $m\in \M$, the functor $-\otimes m:\M \to
  \M$ induces a homotopy autoequivalence on the classifying
  space $\BB\M$.

\item[$(vi)$] The whisker map $(\ref{whism})$ is a homotopy equivalence
$$
\BB\M\simeq \Omega(\BB(\M,\otimes),*)
$$
between the classifying space of the underlying category and the
loop space of the classifying space of the monoidal category.
\end{itemize}
\end{theorem}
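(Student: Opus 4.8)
The plan is to transport the whole statement through the suspension $\Sigma\M$ and deduce it from the bicategorical Theorem \ref{xbs} applied to the one-object bicategory $\B=\Sigma\M$, together with the monoidal main theorem, Theorem \ref{maintheomon}. First I would record the dictionary. Since $\Sigma\M$ has a unique object $*$ with $\Sigma\M(*,*)=\M$ and horizontal composition given by $\otimes$, post-composition by a $1$-cell $m$ is the functor $m\otimes-$ and pre-composition by $m$ is $-\otimes m$. Hence, taking $b=b'=*$, the statements (iv), (v), (vi) of Theorem \ref{xbsmon} are \emph{verbatim} the statements (iv), (v), (vi) of Theorem \ref{xbs} for $\B=\Sigma\M$: one has $\BB\Sigma\M(*,*)=\BB\M$, the induced functors are $p_*=m\otimes-$ and $p^*=-\otimes m$, and the homotopy commutative square in \ref{xbsmon}(vi) is obtained by applying $\Sigma$ to the defining square, so that the diagram $(\ref{ssqm})$ is $(\ref{ssq})$ for $b=b'=*$ and its whisker map is exactly $(\ref{whism})$. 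By Theorem \ref{xbs} these three statements are therefore mutually equivalent, and each is equivalent to the assertion that $\Sigma\M$ has property $\mathrm{B}$.

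Next I would settle $(i)\Leftrightarrow(ii)\Leftrightarrow(iii)$ directly from Theorem \ref{maintheomon}. If (ii) holds, then every monoidal functor $F$ has property $\mathrm{B}_l$, so by Theorem \ref{maintheomon}$(i)$ every square $(\ref{hpqsquaremon})$ is a homotopy pullback, which is (i). Conversely (i) gives a homotopy pullback for every diagram, hence, fixing $F$ and letting $F'$ range over all monoidal functors, Theorem \ref{maintheomon}$(ii)$ yields that $F$ has property $\mathrm{B}_l$; as $F$ was arbitrary this is (ii). The same reasoning applied to $\mathrm{B}_o$, using the second halves of Theorem \ref{maintheomon}$(i)$ and $(ii)$, gives $(i)\Leftrightarrow(iii)$.

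It then remains to bridge the two groups, and this is where the only real subtlety lies: Theorem \ref{xbs} quantifies over \emph{all} lax and oplax functors into $\Sigma\M$, whereas (ii) and (iii) concern only monoidal functors, i.e.\ homomorphisms. The device I would use is to pivot on the distinguished monoidal functor $\text{I}:([0],\otimes)\to(\M,\otimes)$, whose suspension $\Sigma\text{I}$ is the homomorphism $*:[0]\to\Sigma\M$. By the identification recalled just before Theorem \ref{maintheomon} (a monoidal functor has property $\mathrm{B}_l$ if and only if its suspension does), the assertion ``$\text{I}$ has property $\mathrm{B}_l$'' coincides with ``$*:[0]\to\Sigma\M$ has property $\mathrm{B}_l$'', which, by the argument in the proof of Theorem \ref{xbs} relating the property of $b:[0]\to\B$ to the functors $p_*$ on hom-categories, is precisely condition (iv). Thus $(ii)\Rightarrow(iv)$ by specializing $F=\text{I}$. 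For the converse I would not argue $(iv)\Rightarrow(ii)$ head-on; instead I close the loop through the first paragraph: (iv) forces $\Sigma\M$ to have property $\mathrm{B}$, whence Theorem \ref{xbs}$(ii)$ holds, so \emph{every} lax functor into $\Sigma\M$ — in particular every $\Sigma F$ with $F$ monoidal — has property $\mathrm{B}_l$, which is exactly (ii). Symmetrically $(iii)\Leftrightarrow(v)$ via the hom-functors $p^*=-\otimes m$ and property $\mathrm{B}_o$. Combining the three chains $(i)\Leftrightarrow(ii)\Leftrightarrow(iii)$, $(iv)\Leftrightarrow(v)\Leftrightarrow(vi)$, and $(ii)\Leftrightarrow(iv)$ proves all six statements equivalent. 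The main obstacle is exactly the quantifier gap between homomorphisms and arbitrary lax or oplax functors; it is overcome because conditions (iv)--(vi) only ever see the single homomorphism $\Sigma\text{I}$ and the hom-category $\Sigma\M(*,*)=\M$, so that once one of them is in force, the full strength of Theorem \ref{xbs} upgrades it to a statement about all lax functors, which then descends again to the monoidal functors.
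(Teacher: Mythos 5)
Your proposal is correct and takes essentially the same approach as the paper, whose entire proof is the phrase ``Theorem \ref{xbs} particularizes'' --- that is, transport everything through $\Sigma$ using the dictionary $\BB(\M,\otimes)=\BB\Sigma\M$, $F\!\overset{_{\otimes}}\downarrow\!F'=\Sigma F\!\downarrow\!\Sigma F'$, $F\!\overset{_{\otimes}}\downarrow\!\mathrm{I}=\Sigma F\!\downarrow\!*$ recorded just before Theorem \ref{maintheomon}. Your explicit bridging of the quantifier gap (pivoting through the unit functor $\mathrm{I}$, whose suspension is $*:[0]\to\Sigma\M$, so that the monoidal conditions (i)--(iii) force the hom-category conditions (iv)--(vi), which by Theorem \ref{xbs} give property $\mathrm{B}$ for $\Sigma\M$ and hence descend back to all monoidal functors) is precisely the mechanism the paper itself uses inside the proofs of Theorems \ref{mainTheorem}, \ref{maintheomon}, and \ref{xbs}, so you have simply made the paper's implicit particularization argument precise.
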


The implications $(iv)\Rightarrow (vi)$ and $(v)\Rightarrow (vi)$
in the above theorem are essentially due to Stasheff
~\cite{Stasheff1963}, but several other proofs can be found in the
literature (see Jardine ~\cite[Propositions 3.5 and
3.8]{Jardine1991}, for example). When the equivalent properties in
Theorem \ref{xbsmon} hold, we  say that {\em the monoidal category
  is homotopy regular}. For example, {\em regular monoidal categories}
(as termed by Saavedra ~\cite[Chap. I, (0.1.3)]{Saavedra1972}), that
is, monoidal categories $(\M,\otimes)$ where, for every object $m\in
\M$, the functor $m\otimes -:\M\to \M$ is an autoequivalence of the
underlying category $\M$, and, in particular, {\em categorical
groups}
 (so named by Joyal and Street in ~\cite[Definition 3.1]{JS1993} and also termed {\em
   Gr-categories} by Breen in ~\cite[\S 2, 2.1]{Breen1992}),
that is, monoidal categories whose objects are invertible up to an
isomorphism, and whose morphisms are all invertible, are homotopy
regular.

\section{Homotopy pullback of crossed module morphisms}
\label{crossedModules}

Thanks to the equivalence between the category of crossed modules
and the category of 2-groupoids, the results in Section
\ref{mainSection} can be applied to crossed modules. To do so in
some detail, we shall start by briefly reviewing crossed modules and
their classifying spaces.

Recall that, if $\PP$ is any (small) groupoid, then the category of
(left) {\em $\PP$-groups} has objects the functors $\PP\to
\mathbf{Gp}$, from $\PP$ into the category of groups, and its
morphisms, called {\em $\PP$-group homomorphisms}, are natural
transformations. If $\GG$ is a $\PP$-group, then, for any arrow
$p:a\to b$ in $\PP$, we write the associated group homomorphism
$\GG(a)\to \GG(b)$ by $g\mapsto {}^pg$, so that the equalities $
^1g=g$ ,${}^{(q\circ p)}g={}^q({}^pg)$, and ${}^p(g\cdot
g')={}^{p}g\cdot{}^pg'$ hold whenever they make sense. Here, the
symbol $ \circ $ denotes composition in the groupoid $\PP$, whereas
$ \cdot $ denotes multiplication in $\GG$.  For instance, the
assignment to each object of $\PP$ its isotropy group, $a\mapsto
\mathrm{Aut}_\PP(a)$, is the function on objects of a $\PP$-group $
\mathrm{Aut}_\PP:\PP\to\mathbf{Gp} $ such that $^pq=p\circ q\circ
p^{-1}$, for any $p:a\to b$ in $\PP$ and $q\in \mathrm{Aut}_\PP(a)$.
Then, a {\em crossed module} (of groupoids) is a triplet
$$(\GG,\PP,\partial)$$
consisting of a groupoid $\PP$, a $\PP$-group $\GG$, and a
$\PP$-group homomorphism $\partial:\GG\to \mathrm{Aut}_\PP$, called
the {\em
  boundary map}, such that the Peiffer identity $ ^{\partial
  g}g'=g\cdot g'\cdot g^{-1}$ holds, for any $g,g'\in \GG(a)$, $a\in
\Ob{\PP}$.

When a group $P$ is regarded as a groupoid $\PP$ with exactly one
object, the above definition by Brown and Higgins
~\cite{BH1981algebra} recovers the more classic notion of crossed
module $(G,P,\partial)$ due to Whitehead and Mac Lane
~\cite{Whitehead1949,MW1950}, now called {\em crossed modules
  of groups}.  In fact, if $(\GG,\PP,\partial)$ is any crossed
module, then, for any object $a$ of $\PP$, the triplet $(\GG(a),
\mathrm{Aut}_\PP(a),\partial_a)$ is precisely a crossed module of
groups.

Composition with any given functor $F:\PP\to \QQ$ defines a functor
from the category of $\QQ$-groups to the category of $\PP$-groups: $
(\varphi:\GG\to \mathcal{H})\mapsto (\varphi F:\GG F \to
\mathcal{H}F) $. For the particular case of the $\QQ$-group of
automorphisms $\mathrm{Aut}_{\QQ}$, we have the $\PP$-group
homomorphism $ F:\mathrm{Aut}_{\PP}\to \mathrm{Aut}_{\!\QQ}\,F $,
which, at any $a\in\PP$, is given by the map
$\mathrm{Aut}_{\PP}(a)\to \mathrm{Aut}_{\QQ}(F a)$, $q\mapsto F q$,
defined by the functor $F$. Then, a {\em morphism of crossed
modules}
$$(\varphi,F):(\GG,\PP,\partial)\to (\mathcal{H},\QQ,\partial)$$
consists of a functor $F:\PP\to \QQ$ together with a $\PP$-group
homomorphism $\varphi:\GG\to \HH F$ such that the square below
commutes.
$$
\xymatrix@C=25pt@R=20pt{
\GG\ar[r]^{\partial}\ar[d]_{\varphi}&\mathrm{Aut}_\PP\ar[d]^{F}\\
\HH F\ar[r]^-{\partial F}&\mathrm{Aut}_\QQ F.
}
$$

The category of crossed modules, where compositions and identities
are defined in the natural way, is denoted by $\mathbf{Xmod}$. Let
us  now recall from Brown and Higgins ~\cite[Theorem
4.1]{BH1981equivalence} that there is an equivalence between the
category of crossed modules and the category of 2-groupoids
\begin{equation}\label{equ2}
\beta: \mathbf{Xmod}\overset{_\sim}\longrightarrow
  2\text{-}\mathbf{Gpd},\end{equation}
which is as follows: Given any crossed module $(\GG,\PP,\partial)$,
$\PP$ is the underlying groupoid of the 2-groupoid
$\beta(\GG,\PP,\partial)$, whose 2-cells
$$
\xymatrix{
a_0\ar@/^0.7pc/[r]^{p}\ar@/_0.7pc/[r]_{\bar{p}}\ar@{}[r]|{\Downarrow\,g} &a_1 }
$$
are those elements $g\in \GG(a_0)$ such that $\bar{p}\circ \partial
g=p$. The vertical and horizontal composition of 2-cells are,
respectively, given by
$$
\begin{array}{cc}
\xymatrix@C=40pt{
a_0\ar[r]|{\bar{p}}\ar@/^1.2pc/[r]^{p}
             \ar@{}@<9pt>[r]|(.55){\Downarrow g}
             \ar@{}@<-8pt>[r]|(.55){\Downarrow \bar{g}}
  \ar@/_1.2pc/[r]_{\bar{\bar{p}}}&a_1}\
\overset{\cdot}\mapsto
\xymatrix@C=30pt{a_0\ar@/^1pc/[r]^{p}
        \ar@{}[r]|(.55){\Downarrow \bar{g}\cdot g}
        \ar@/_1pc/[r]_{\bar{\bar{p}}}&a_1},
&
\xymatrix@C=25pt{
 a_0\ar@{}[r]|(.55){\Downarrow g_1}\ar@/^0.7pc/[r]^{p_1}
  \ar@/_0.7pc/[r]_{\bar{p}_1}&a_1 \ar@{}[r]|(.55){\Downarrow
    g_2}\ar@/^0.7pc/[r]^{p_2}\ar@/_0.7pc/[r]_{\bar{p}_2}&a_2} \
\overset{\circ}\mapsto \ \xymatrix@C=45pt{a_0\ar@{}[r]|{\Downarrow
    {}^{\bar{p}_1^{-1}}\!\!\!g_2\cdot g_1} \ar@/^0.8pc/[r]^{p_2\circ
    p_1}\ar@/_0.9pc/[r]_{\bar{p}_2\circ \bar{p}_1}&a_2.  }
\end{array}
$$

A morphism of crossed modules $(\varphi,F):(\GG,\PP,\partial)\to
(\mathcal{H},\QQ,\partial)$ is carried by the equivalence to the
2-functor $\beta(\varphi,F):\beta(\GG,\PP,\partial)\to
\beta(\mathcal{H},\QQ,\partial)$ acting on cells by
$$
\xymatrix{a_0\ar@/^0.7pc/[r]^{p}\ar@/_0.7pc/[r]_{\bar{p}}
             \ar@{}[r]|{\Downarrow g} & a_1}
 \mapsto
\xymatrix{F a_0\ar@/^0.7pc/[r]^{F p}\ar@/_0.7pc/[r]_{F \bar{p}}
     \ar@{}[r]|{\Downarrow \varphi g} & F a_1.}
$$

\begin{example}\label{becm}{\em A striking  example of crossed
module is $\Pi(X,A,S)=(\pi_2(X,A),\pi(A,S),\partial)$, which comes
associated to any triple $(X,A,S)$, where $X$ is any topological
space, $A\subseteq X$ a subspace, and $S\subseteq A$ a set of (base)
points. Here, $\pi(A,S)$ is the fundamental groupoid of homotopy
classes of paths in $A$ between points in $S$, $\pi_2(X,A):
\pi(A,S)\to\mathbf{Gp}$ is the functor associating to each
 $a\in S$ the relative homotopy group
$\pi_2(X,A,a)$, and, at any $a\in S$, the boundary map $
\partial:\pi_2(X,A,a)\to \pi_1(A,a)$
is the usual boundary homomorphism in the exact sequence of homotopy
groups based at $a$ of the pair $(X,A)$:
$$\begin{bmatrix}\xymatrix@C=12pt@R=10pt{
a\ar@{=}[d]\ar[r]^{u}
\ar@{}@<12pt>[d]|{g}
&a\ar@{=}[d]\\
a\ar@{=}[r]&a
}\end{bmatrix}\overset{\partial}\mapsto\hspace{4pt}
\xymatrix@C=15pt{a\ar[r]^{[u]}&a.}
$$

 Furthermore, $\pi(A,S)$ is the underlying groupoid of the {\em Whitehead
$2$-groupoid} $W(X,A,S)$ presented by Moerdijk and Svensson
~\cite{MS1993}, whose 2-cells
$$\begin{bmatrix}\xymatrix@C=12pt@R=8pt{
a\ar@{=}[d]\ar[r]^{v}
\ar@{}@<12pt>[d]|{g}
&b\ar@{=}[d]\\
a\ar[r]_{w}&b
}\end{bmatrix}: [v]\Rightarrow [w]:a\to b,
$$
are equivalence classes of maps $g:I\times I\to X$, from the square
$I\times I$ into $X$, which are constant along the vertical edges
with values in $S$, and map the horizontal edges into $A$; two such
maps
 are equivalent if they are homotopic by a homotopy that
is constant along the vertical edges and deforms the horizontal
edges within $A$.

Both constructions  $\Pi(X,A,S)$ and
 $W(X,A,S)$ correspond to each other by the equivalence of categories
$(\ref{equ2})$.  More precisely, there is  a natural isomorphism
 \begin{equation}\label{betpiw} \beta \Pi(X,A,S)\cong
 W(X,A,S),\end{equation}
 which is the identity on 0- and 1-cells, and carries a
2-cell $[g]:[v]\Rightarrow [w]$ of $\beta\Pi(X,A,S)$ to the 2-cell
$1_{[w]}\circ [g]:[v]\Rightarrow [w]$ of $ W(X,A,S)$:
$$\begin{pmatrix}\begin{bmatrix}\xymatrix@R=15pt@C=15pt{
a\ar@{=}[d]\ar[r]^{u}
\ar@{}@<15pt>[d]|{_{g(s,t)}}
&a\ar@{=}[d]\\
a\ar@{=}[r]&a
}\end{bmatrix}:
\xymatrix@C=11pt{[v]\ar@2[r]&[w]}\hspace{-5pt}\end{pmatrix}\hspace{4pt}
\mapsto
\hspace{4pt}
\begin{pmatrix}\begin{bmatrix}\xymatrix@C=19pt@R=15pt{
a\ar@{=}[d]\ar[r]^{u}
\ar@{}@<15pt>[d]|{_{g(2s,t)}}
&
a\ar[r]^{w}\ar@{=}[d]
\ar@{}@<15pt>[d]|{_{w(2s\text{-}1)}}
&
b\ar@{=}[d]\\
a\ar@{=}[r]&a\ar[r]_{w}&b
}\end{bmatrix}:
\xymatrix@C=11pt{[v]\ar@2[r]&[w]}\hspace{-5pt}
\end{pmatrix}.
$$
}\end{example}

For a simplicial set $K$, its  {\em fundamental}, or {\em homotopy},
{\em crossed module} $\Pi(K)$ is defined as the crossed module
\begin{equation}
\Pi(K)=\Pi\big(|K|, |K^{(1)}|,|K^{(0)}|\big)
\end{equation}
constructed in Example \ref{becm} (here, $K^{(n)}$ denotes the
$n$-skeleton, as usual). The construction $K\mapsto \Pi(K)$ gives
rise to a functor $\Pi:\mathbf{SimpSet}\to\mathbf{Xmod}$, from the
category of simplicial sets to the category of crossed modules. To
go in the other direction, we have the notion of {\em nerve of a
crossed
  module}, which is actually a special case of the definition of nerve
for crossed complexes by Brown and Higgins ~\cite{BH1991}. Thus, the
{\em nerve} $N(\GG,\PP,\partial)$ of a crossed module
$(\GG,\PP,\partial)$ is the simplicial set
\begin{equation}\label{nervcm}
N(\GG,\PP,\partial):\Delta^{op}\longrightarrow \Set,\hspace{0.4cm}
 [n]\mapsto \mathbf{Xmod}\big(\Pi(\Delta[n]),(\GG,\PP,\partial)\big),
\end{equation}
whose $n$-simplices are all morphisms of crossed
 modules $\Pi(\Delta[n])\to (\GG,\PP,\partial)$.

The {\em classifying space} $\BB(\GG,\PP,\partial)$ of a crossed
module $(\GG,\PP,\partial)$ is the  geometric realization of its
nerve, that is,
\begin{equation}\BB(\GG,\PP,\partial)=|N(\GG,\PP,\partial)|.\end{equation}
By ~\cite[Proposition 2.6]{BH1991}, $\BB(\GG,\PP,\partial)$ is a
CW-complex whose 0-cells identify with the objects of the groupoid
$\PP$ and whose homotopy groups, at any $a\in \Ob\PP$, can be
algebraically computed as
\begin{equation}\label{deshog}
\pi_i\big(\BB(\GG,\PP,\partial),a\big)=\left\{\begin{array}{l}
\text{the set of connected components of } \PP,
\text{ if } i=0,\\
\mathrm{Coker } \ \partial:\GG(a)\to \mathrm{Aut}_\PP(a),
\text{ if } i=1,\\
\mathrm{Ker }\  \partial:\GG(a)\to \mathrm{Aut}_\PP(a),
\text{ if } i=2,\\
0, \text{ if } i\geq 3.\\
\end{array}\right.
\end{equation}
Therefore, classifying spaces of crossed modules are homotopy
2-types. Furthermore, it is a consequence of ~\cite[Theorem
4.1]{BH1991} that, for any CW-complex $X$ with $\pi_i(X,a)=0$ for
all $i>2$ and base 0-cell $a$, there is a homotopy equivalence
$X\simeq \BB\Pi(X,X^{(1)},X^{(0)})$. Therefore, crossed modules are
algebraic models for homotopy 2-types.

\begin{lemma}\label{lehebgn} For any crossed module
  $(\GG,\PP,\partial)$, there is a homotopy natural homotopy
  equivalence
\begin{equation}\label{hnhe} \BB(\GG,\PP,\partial)\simeq
\BB\beta(\GG,\PP,\partial).\end{equation}
\end{lemma}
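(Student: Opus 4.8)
The plan is to factor the desired equivalence through the Duskin--Street geometric nerve of the $2$-groupoid $\beta(\GG,\PP,\partial)$. Since a $2$-groupoid is in particular a bicategory, the Homotopy Invariance Theorem $(\ref{4h})$ supplies a homotopy equivalence $|\Delta^{\mathrm{u}}\beta(\GG,\PP,\partial)|\simeq \BB\beta(\GG,\PP,\partial)$, where $\Delta^{\mathrm{u}}$ denotes the normal geometric nerve whose $p$-simplices are the normal lax functors $[p]\to\beta(\GG,\PP,\partial)$. By Lemma $\ref{facts2}$ (the equivalence $|\Delta^{\mathrm{u}}\B|\simeq|\Delta\B|$ is natural on normal lax functors and $|\Delta\B|\simeq\BB\B$ is homotopy natural on lax functors), this composite equivalence is homotopy natural on normal lax functors, hence in particular on the strict $2$-functors $\beta(\varphi,F)$ arising from morphisms of crossed modules. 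It therefore suffices to produce an isomorphism of simplicial sets, natural in $(\GG,\PP,\partial)$,
\begin{equation*}
N(\GG,\PP,\partial)\;\cong\;\Delta^{\mathrm{u}}\beta(\GG,\PP,\partial),
\end{equation*}
for then taking geometric realizations and composing with the above yields the homotopy natural homotopy equivalence $(\ref{hnhe})$.

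To build this isomorphism I would compare the two descriptions of the $p$-simplices. By $(\ref{nervcm})$ a $p$-simplex of $N(\GG,\PP,\partial)$ is a crossed-module morphism $\Pi(\Delta[p])\to(\GG,\PP,\partial)$, which by the equivalence $(\ref{equ2})$ together with the natural isomorphism $(\ref{betpiw})$ is the same datum as a strict $2$-functor $W(|\Delta[p]|,|\Delta[p]^{(1)}|,|\Delta[p]^{(0)}|)\to\beta(\GG,\PP,\partial)$ out of the Whitehead $2$-groupoid of the standard $p$-simplex. On the other side, a normal lax functor $[p]\to\beta(\GG,\PP,\partial)$ unwinds to the following data: objects $x_i$ for $0\le i\le p$, $1$-cells $p_{ij}\colon x_i\to x_j$ for $i\le j$ with $p_{ii}=1_{x_i}$, and for each $i\le j\le k$ a structure $2$-cell $p_{jk}\circ p_{ij}\Rightarrow p_{ik}$ --- that is, an element $g_{ijk}\in\GG(x_i)$ with $p_{ik}\circ\partial g_{ijk}=p_{jk}\circ p_{ij}$ --- subject to the unit normalization and to the associativity coherence indexed by the $4$-tuples $i\le j\le k\le l$. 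The technical heart is to verify that this is exactly the data of a map out of $\Pi(\Delta[p])$: the generating $0$- and $1$-cells of $\Pi(\Delta[p])$ are the vertices and edges of $\Delta[p]$, the free generators of its $\PP$-group are the $2$-faces (each with boundary the corresponding triangle, whence the boundary relation above), and the relations imposed by the $3$-faces are precisely the lax-functor coherence, i.e.\ cocycle, conditions. Matching these generator by generator, and checking that the simplicial face and degeneracy operators of $(\ref{nervcm})$ --- induced by $\Pi$ applied to the coface and codegeneracy maps of $\Delta[\bullet]$ --- correspond to those of $\Delta^{\mathrm{u}}\beta(\GG,\PP,\partial)$ --- induced by the monotone maps between the $[\bullet]$ --- yields the required natural isomorphism.

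The main obstacle is thus the explicit presentation of the fundamental crossed module $\Pi(\Delta[p])$ of the $p$-simplex and the bookkeeping needed to identify its crossed-module homomorphisms with normal lax functors out of $[p]$, compatibly with all simplicial operators; this is the low-dimensional, $2$-groupoid-valued shadow of Street's identification of the simplices of the nerve of a strict $\omega$-category with its orientals. I expect the orientation conventions to require care: depending on the chosen direction of the structure $2$-cells of $\beta(\GG,\PP,\partial)$, one may instead land on the normal oplax nerve $\nabla_{\mathrm{u}}\beta(\GG,\PP,\partial)$, but this causes no difficulty, since $(\ref{4h})$ and Lemma $\ref{facts2}$ equally provide a homotopy equivalence $|\nabla_{\mathrm{u}}\beta(\GG,\PP,\partial)|\simeq\BB\beta(\GG,\PP,\partial)$ that is natural on normal oplax functors. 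Once the simplicial isomorphism is in hand, the conclusion $(\ref{hnhe})$ follows formally by realization.
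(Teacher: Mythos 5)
Your proposal is correct and shares the paper's skeleton — both proofs factor the equivalence through $|\Delta^{\!\mathrm{u}}\beta(\GG,\PP,\partial)|$, reduce everything to a natural simplicial isomorphism $N(\GG,\PP,\partial)\cong \Delta^{\!\mathrm{u}}\beta(\GG,\PP,\partial)$, and then conclude by $(\ref{4h})$ together with Lemma \ref{facts2} (homotopy naturality on the $2$-functors $\beta(\varphi,F)$) — but you establish that simplicial isomorphism by a genuinely different argument. The paper gets it formally, with no simplex-level computation at all: it cites the adjunction $\Pi\dashv N$ of Brown and Higgins ~\cite[Theorem 2.4]{BH1991}, the adjunction $W\dashv \Delta^{\!\mathrm{u}}$ of Moerdijk and Svensson ~\cite[Theorem 2.3]{MS1993}, observes that $(\ref{betpiw})$ gives $\beta\,\Pi\cong W$, and concludes $N\cong\Delta^{\!\mathrm{u}}\beta$ by uniqueness of right adjoints. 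You instead propose to verify the isomorphism representably, matching crossed-module morphisms out of $\Pi(\Delta[p])$ with normal lax functors $[p]\to\beta(\GG,\PP,\partial)$ via an explicit presentation of $\Pi(\Delta[p])$ (free groupoid on the edges; crossed-module part generated by the $2$-faces, with relations from the $3$-faces); this amounts to re-proving, on representables, exactly the content of the Moerdijk--Svensson adjunction. Your route is viable and has the virtue of making the isomorphism completely explicit — indeed the paper's Remark following the lemma records precisely this description of $\Delta^{\!\mathrm{u}}\beta(\GG,\PP,\partial)$, identifying it with Dakin's nerve — and your handling of naturality and of the lax/oplax orientation caveat is sound. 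The one point to make honest is the presentation of $\Pi(\Delta[p])$ itself: that the fundamental crossed module of $\big(|\Delta[p]|,|\Delta[p]^{(1)}|,|\Delta[p]^{(0)}|\big)$ is freely generated by the $2$-faces over the free groupoid on the edges, modulo the $3$-face relations, is not a formality but a theorem (Whitehead's theorem on free crossed modules, or Brown--Higgins' computation of $\Pi$ of a skeletal filtration), so it must be cited or proved; the paper's adjunction argument is designed precisely to absorb this bookkeeping into the two cited adjunctions.
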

\begin{proof}
  By ~\cite[Theorem 2.4]{BH1991}, the functor
  $\Pi:\mathbf{SimpSet}\to\mathbf{Xmod}$ is left adjoint to the nerve
  functor $N:\mathbf{Xmod}\to \mathbf{SimpSet}$. Furthermore, in
  ~\cite[Theorem 2.3]{MS1993} Moerdijk and Svensson show that the {\em
    Whitehead $2$-groupoid functor} $W:\mathbf{SimpSet}\to
  2\text{-}\mathbf{Gpd}$, $K\mapsto W(K)=W\big(|K|,
  |K^{(1)}|,|K^{(0)}|\big)$ (see Example \ref{becm}) is left adjoint
  to the unitary geometric nerve functor
  $\Delta^{\!\mathrm{u}}:2\text{-}\mathbf{Gpd}\to
  \mathbf{SimpSet}$. Since, owing to the isomorphisms
  $(\ref{betpiw})$, there is a natural isomorphism $\beta\,\Pi\cong
  W$, we conclude that $\Delta^{\!\mathrm{u}}\beta\cong N$. Therefore,
  for $(\GG,\PP,\partial)$ any crossed module,
  $\BB(\GG,\PP,\partial)=|N(\GG,\PP,\partial)|\cong
  |\Delta^{\!\mathrm{u}}\beta(\GG,\PP,\partial)|\overset{(\ref{4h})}\simeq
  \BB\beta(\GG,\PP,\partial)$.
\end{proof}
\begin{remark}{\em For any crossed module $(\GG,\PP,\partial)$, the
$n$-simplices of $\Delta^{\!\mathrm{u}}\beta(\GG,\PP,\partial)$,
that is, the normal lax functors $[n]\to \beta(\GG,\PP,\partial)$,
are precisely systems of data
$$(g,p,a)=\big(g_{i,j,k},p_{i,j},a_i\big)_{0\leq i\leq j\leq k\leq
  n}$$
consisting of objects $a_i$ of $\PP$,  arrows $p_{i,j}:a_i\to a_j$
of $\PP$, with $p_{i,i}=1$,  and elements $g_{i,j,k}\in \GG(a_i)$,
with  $g_{i,i,j}=g_{i,j,j}=1$, such that the following conditions
hold:
$$
\begin{array}{ll}
\partial(g_{i,j,k})= p_{i,k}^{-1}\circ p_{j,k}\circ p_{i,j}
& \text{for } i\leq j\leq k,\\[4pt]
g_{i,j,k}^{-1}\cdot g_{i,k,l}^{-1}\cdot g_{i,j,l}\cdot
{}^{p_{i,j}^{-1}}{g}_{j,k,l}= 1& \text{for }i\leq j\leq k\leq l.
\end{array}
$$
Thus, the unitary geometric nerve
$\Delta^{\!\mathrm{u}}\beta(\GG,\PP,\partial)$ coincides with the
simplicial set called by Dakin ~\cite[Chapter 5, \S 3]{Dakin1977}
the {\em nerve of the crossed module} $(\GG,\PP,\partial)$ (cf.
~\cite[page 99]{BH1991} and ~\cite[Chapter 1, \S 11]{Ashley1983}).
From the above explicit description, it is easily proven that the
nerve of a crossed module is a Kan complex whose homotopy groups are
given as in $(\ref{deshog})$. }
\end{remark}

Thanks to Lemma \ref{lehebgn}, the bicategorical results obtained in
Section \ref{mainSection} are transferable to the setting of crossed
modules. To do so, if
$$\xymatrix@C=35pt{(\GG,\PP,\partial)\ar[r]^{(\varphi,F)}
  & (\HH,\QQ,\partial) &
  (\GG',\PP',\partial)\ar[l]_{(\varphi',F')}}$$ is any diagram
in $\mathbf{Xmod}$, then its {\em ``homotopy pullback crossed
module"}
\begin{equation}\label{hpulcros}
(\varphi,F)\!\downarrow\!(\varphi',F')=\big(\GG_{^{\varphi\mspace{-1mu},\mspace{-2mu}F\downarrow \mspace{-2mu}
\varphi'\mspace{-3mu},\mspace{-2mu}F'}},
\PP_{^{\!\varphi\mspace{-1mu},\mspace{-2mu}F\downarrow \mspace{-2mu}
\varphi'\mspace{-3mu},\mspace{-2mu}F'}},\partial\big)
\end{equation}
is constructed as follows:

{\em - The groupoid
$\PP_{^{\!\varphi\mspace{-1mu},\mspace{-2mu}F\downarrow
\mspace{-2mu} \varphi'\mspace{-3mu},\mspace{-2mu}F'}}$} has objects
the triples $(a,q,a')$, with $a\in \Ob \PP$, $a'\in \Ob \PP'$, and
$q:F a\to F'a'$ a morphism in $\QQ$. A morphism
$(p,h,p'):(a_0,q_0,a'_0)\to (a_1,q_1,a'_1)$  consists of a morphism
$p:a_0\to a_1$ in $\PP$, a morphism $p':a'_0\to a'_1$ in $\PP'$, and
an element $h\in \HH(Fa_0)$, which measures the lack of
commutativity of the square
$$
\xymatrix@C=20pt@R=20pt{Fa_0\ar[r]^{q_0}\ar[d]_{F p}&F'a'_0\ar[d]^{F'\!p'}\\
F a_1\ar[r]^{q_1}&F'a'_1}
$$
in the sense that the following equation holds: $
\partial h=F p^{-1}\circ q_1^{-1}\circ F'\!p'\circ q_0
$. The composition of two morphisms $ \xymatrix@C=35pt{
(a_0,q_0,a'_0)\ar[r]^{(p_1,h_1,p'_1)} &
(a_1,q_1,a'_1)\ar[r]^{(p_2,h_2,p'_2)}& (a_2,q_2,a'_2)} $ is given by
the formula
$$
(p_2,h_2,p'_2)\circ (p_1,h_1,p'_1)=
(p_2\circ p_1, {}^{F p_{1}^{-1}}\!h_2\cdot h_1, p'_2\circ p'_1).
$$
For every object $(a,q,a')$, its identity is $
1_{(a,q,a')}=(1_a,1,1_{a'})$, and the inverse of any morphism
$(p,h,p')$ as above is $
(p,h,p')^{-1}=(p^{-1},{}^{Fp}h^{-1},p'^{-1}). $ \vspace{0.2cm}

{\em - The functor
$\GG_{^{\varphi\mspace{-1mu},\mspace{-2mu}F\downarrow \mspace{-2mu}
\varphi'\mspace{-3mu},\mspace{-2mu}F'}}:\PP_{^{\!\varphi\mspace{-1mu},\mspace{-2mu}F\downarrow
\mspace{-2mu} \varphi'\mspace{-3mu},\mspace{-2mu}F'}}\to
\mathbf{Gp}$} is defined on objects by
$$\GG_{^{\varphi\mspace{-1mu},\mspace{-2mu}F\downarrow \mspace{-2mu}
\varphi'\mspace{-3mu},\mspace{-2mu}F'}}(a,q,a')= \GG(a)\times
\GG'\!(a') ,$$ and, for any morphism $(p,h,p'):(a_0,q_0,a'_0)\to
(a_1,q_1,a'_1)$, the associated homomorphism is given by $
^{(p,h,p')}(g,g')=({}^pg,{}^{p'}\!g') $.

 \vspace{0.2cm}{\em - The boundary map $\partial:\GG_{^{\varphi\mspace{-1mu},\mspace{-2mu}F\downarrow \mspace{-2mu}
\varphi'\mspace{-3mu},\mspace{-2mu}F'}}\to
\mathrm{Aut}_{\PP_{{\!\varphi\mspace{-1mu},\mspace{-2mu}F\downarrow
\mspace{-2mu} \varphi'\mspace{-3mu},\mspace{-2mu}F'}}}$}, at any
object $(a,q,a')$ of the groupoid $\PP_{^{\!\varphi,F\downarrow \varphi',F'}}$,
is  given by the formula
$$\partial(g,g')=
(\partial g, \varphi g^{-1}\cdot{}^{q^{-1}}\!\!\varphi'g',\partial g').$$

\vspace{0.2cm}
For any crossed module $(\HH,\QQ,\partial)$, we identify any object
$b\in \QQ$ with the morphism from the trivial crossed module
$b:(1,1,1)\to (\HH,\QQ,\partial)$ such that $b(1)=b$, so that, for
any morphism of crossed modules $(\varphi, F):(\GG,\PP,\partial)\to
(\HH,\QQ,\partial)$, we have defined the {\em ``homotopy-fibre
crossed
  module"} $$(\varphi, F)\!\downarrow\!b.$$

Next, we summarize our results in this setting of crossed modules.
The crossed module $(\ref{hpulcros})$ comes with a (non-commutative) square
\begin{equation}\label{funsqucro}\begin{array}{c}
    \xymatrix@R=25pt@C=40pt{
(\varphi,F)\!\downarrow\!(\varphi',F')\ar[r]^(.53){(\pi',P')}\ar[d]_{(\pi,P)}
& (\GG',\PP',\partial)
      \ar[d]^{(\varphi',F')}\\
      (\GG,\PP,\partial)\ar[r]^{(\varphi,F)}&(\HH,\QQ,\partial),
    }\end{array}
\end{equation}
where
$$
\begin{array}{c}
\xymatrix{
(a_0\overset{p}\to a_1)& \big((a_0,q_0,a'_0)\ar@{|->}[l]_{P}
                        \ar[r]^{(p,h,p')}
&(a_1,q_1,a'_1)\big)\ar@{|->}[r]^{P'}&(a'_0\overset{p'}\to a'_1)
}\\
\xymatrix{g&\ar@{|->}[l]_{\pi} (g,g')\ar@{|->}[r]^{\pi'}&g'}
\end{array}
$$

\begin{theorem}\label{mthpcm} The following statements hold:

$(i)$ For any morphisms of crossed modules
$\xymatrix@C=35pt{(\GG,\PP,\partial)\ar[r]^{(\varphi,F)}
  & (\HH,\QQ,\partial) &
  (\GG',\PP',\partial),\ar[l]_{(\varphi',F')}}$ there is
a homotopy $\BB(\varphi,F)\,\BB(\pi,P)\Rightarrow \BB(\varphi',F')\,
\BB(\pi',P')$ making the homotopy commutative square
 \begin{equation}\label{ifunsqucro}\begin{array}{c}
\xymatrix@C=40pt{
\BB((\varphi,F)\!\downarrow\!(\varphi',F')) \ar@{}@<60pt>[d]|(.45){\Rightarrow}
\ar[r]^-{\BB(\pi',P')}\ar[d]_{\BB(\pi,P)}&
 \BB(\GG',\PP',\partial)
\ar[d]^{\BB(\varphi',F')}\\
\BB(\GG,\PP,\partial)\ar[r]^{\BB (\varphi,F)}&\BB(\HH,\QQ,\partial),
}\end{array}
\end{equation}
induced by $(\ref{funsqucro})$ on classifying spaces, a homotopy
pullback square.

\vspace{0.1cm} $(ii)$ For any morphism of crossed modules
$(\varphi,F):(\GG,\PP,\partial)\to (\HH,\QQ,\partial)$ and every
object $b\in\QQ$, there is an induced homotopy fibre sequence
$$
\xymatrix@C=30pt{
\BB((\varphi,F)\!\downarrow\!b)\ar[r]^{\BB(\pi,P)}
&\BB(\GG,\PP,\partial)\ar[r]^{\BB(\varphi,F)} &\BB(\HH,\QQ,\partial).}
$$

\vspace{0.1cm} $(iii)$ A morphism of crossed modules
$(\varphi,F):(\GG,\PP,\partial)\to (\HH,\QQ,\partial)$ induces a
homotopy equivalence on classifying spaces, $\BB(\varphi,F):
\BB(\GG,\PP,\partial)\simeq \BB(\HH,\QQ,\partial)$, if and only if,
for every object $b\in\QQ$, the space
$\BB((\varphi,F)\!\downarrow\!b)$ is contractible.

\vspace{0.1cm} $(iv)$ For any crossed module $(\GG,\PP,\partial)$
and object $a\in \PP$, there is a homotopy equivalence
$$
\BB\big((\GG,\PP,\partial)(a)\big)\simeq \Omega(\BB(\GG,\PP,\partial),a),
$$
where $(\GG,\PP,\partial)(a)$ is the groupoid whose objects are the
automorphisms $p:a\to a$ in $\PP$, and whose arrows $g:p\to q$ are
those elements $g\in \GG(a)$ such that $p= q\circ \partial g$.
\end{theorem}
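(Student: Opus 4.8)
The plan is to deduce all four statements from the bicategorical results of Section~\ref{mainSection} by transporting them along the equivalence $\beta$ of $(\ref{equ2})$ and the homotopy natural equivalence $\BB(\GG,\PP,\partial)\simeq\BB\beta(\GG,\PP,\partial)$ of Lemma~\ref{lehebgn}. The essential and, I expect, the only genuinely laborious step is to establish a natural isomorphism of $2$-groupoids
$$
\beta\big((\varphi,F)\!\downarrow\!(\varphi',F')\big)\cong
\beta(\varphi,F)\!\downarrow\!\beta(\varphi',F'),
$$
identifying the homotopy pullback crossed module with the comma bicategory of the two $2$-functors $\beta(\varphi,F)$ and $\beta(\varphi',F')$ (which is legitimate since $2$-functors are homomorphisms, hence both lax and oplax). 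On $0$-cells both sides are triples $(a,q,a')$ with $q\colon Fa\to F'a'$. On $1$-cells, a $2$-cell $\beta\colon F'p'\circ q_0\Rightarrow q_1\circ Fp$ in $\beta(\HH,\QQ,\partial)$ is exactly an element $h\in\HH(Fa_0)$ with $(q_1\circ Fp)\circ\partial h=F'p'\circ q_0$, i.e. $\partial h=Fp^{-1}\circ q_1^{-1}\circ F'p'\circ q_0$, so the $1$-cells of the comma bicategory are precisely the morphisms $(p,h,p')$ of $\PP_{\varphi,F\downarrow\varphi',F'}$; and a $2$-cell $(\alpha,\alpha')$ becomes a pair $(g,g')\in\GG(a_0)\times\GG'(a'_0)=\GG_{\varphi,F\downarrow\varphi',F'}(a_0,q_0,a'_0)$, the coherence condition on $(\alpha,\alpha')$ matching the defining relation of the $2$-cells of $\beta$ applied to the boundary map. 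I would then check componentwise that the horizontal composition $(\ref{tc1cs})$, the comma structure constraints, and the boundary map correspond, and that the projection $2$-functors of the main square $(\ref{pqsquare})$ go to the projections $(\pi,P),(\pi',P')$ of $(\ref{funsqucro})$.

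Granting this identification, part $(i)$ follows quickly. Every $2$-groupoid is a bigroupoid, so, as observed after Theorem~\ref{xbs}, $\beta(\HH,\QQ,\partial)$ has the property $\mathrm{B}$; in particular $\beta(\varphi,F)$ has property $\mathrm{B}_l$. Hence Theorem~\ref{mainTheorem}$(ii)$ makes the square $(\ref{pqsquare})$ for these $2$-functors induce a homotopy pullback on classifying spaces. Comparing it with $(\ref{ifunsqucro})$, Lemma~\ref{lehebgn} provides a homotopy commutative cube whose four connecting faces are naturality squares of the equivalences $\BB\simeq\BB\beta$, each vertical arrow being a homotopy equivalence. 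Since homotopy pullbacks are both preserved and reflected under levelwise homotopy equivalence of homotopy commutative squares, the face $(\ref{ifunsqucro})$ is a homotopy pullback too.

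For $(ii)$ and $(iii)$, I would specialize to the trivial morphism $b\colon(1,1,1)\to(\HH,\QQ,\partial)$, under which the isomorphism above restricts to $\beta\big((\varphi,F)\!\downarrow\!b\big)\cong\beta(\varphi,F)\!\downarrow\!b$. Corollary~\ref{hfth}$(i)$ then gives the homotopy fibre sequence of $(ii)$ after transport through Lemma~\ref{lehebgn}. In $(iii)$, the ``if'' direction is Corollary~\ref{tagen}$(i)$, while the ``only if'' direction follows from the fibre sequence of $(ii)$, since a homotopy equivalence has contractible homotopy fibres and the fibre over $\BB b$ is $\BB\big((\varphi,F)\!\downarrow\!b\big)$. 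Finally, $(iv)$ comes from Corollary~\ref{corome} applied to the bigroupoid $\beta(\GG,\PP,\partial)$: its hom-category of endomorphisms $\beta(\GG,\PP,\partial)(a,a)$ is exactly the groupoid $(\GG,\PP,\partial)(a)$ described in the statement, so $\Omega(\BB\beta(\GG,\PP,\partial),a)\simeq\BB\big((\GG,\PP,\partial)(a)\big)$, and transporting the left-hand side and the base point $a$ through Lemma~\ref{lehebgn} yields the asserted equivalence.
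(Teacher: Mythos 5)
Your proposal is correct and follows essentially the same route as the paper: identify $\beta\big((\varphi,F)\!\downarrow\!(\varphi',F')\big)$ with the comma bicategory $\beta(\varphi,F)\!\downarrow\!\beta(\varphi',F')$ by direct comparison of cells, invoke that $2$-groupoids (being bigroupoids) satisfy the property $\mathrm{B}$ of Theorem \ref{xbs} so that Theorem \ref{mainTheorem} gives a homotopy pullback at the $2$-groupoid level, and transport everything through the homotopy natural equivalence of Lemma \ref{lehebgn}, with parts $(ii)$--$(iv)$ obtained by specializing to $b:(1,1,1)\to(\HH,\QQ,\partial)$ and applying Corollaries \ref{hfth}, \ref{tagen}, and \ref{corome} exactly as the paper does (the paper merely compresses $(ii)$ and $(iii)$ into ``clear'').
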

\begin{proof} $(i)$ Let us apply the equivalence of categories
  $(\ref{equ2})$ to the square of crossed modules
  $(\ref{funsqucro})$. Then, by direct comparison, we see that the
  equation between squares of 2-groupoids
$$
\xymatrix@R=20pt@C=30pt{
\beta\big((\varphi,F)\!\downarrow\!(\varphi',F')\big)\ar[r]^(.57){\beta(\pi',P')}\ar[d]_{\beta(\pi,P)}
& \beta(\GG',\PP',\partial)
      \ar[d]^{\beta(\varphi',F')}
\ar@{}@<45pt>[d]|{\textstyle =}
\\
      \beta(\GG,\PP,\partial)\ar[r]^{\beta(\varphi,F)}&\beta(\HH,\QQ,\partial)
    }
\xymatrix@R=20pt@C=30pt{
\big(\beta(\varphi,F)\!\downarrow\!\beta(\varphi',F')\big)\ar[r]^(.57){P'}\ar[d]_{P}
& \beta(\GG',\PP',\partial)
      \ar[d]^{\beta(\varphi',F')}\\
      \beta(\GG,\PP,\partial)\ar[r]^{\beta(\varphi,F)}&\beta(\HH,\QQ,\partial)
}
$$
holds, where the square on the right is $(\ref{pqsquare})$ for the
2-functors $\beta(\varphi,F)$ and $\beta(\varphi',F')$. As any
2-groupoid has property $(iv)$ in Theorem \ref{xbs} (see the comment
before Corollary \ref{corome}), that theorem gives a homotopy
$\BB\beta(\varphi,F)\, \BB \beta(\pi,P)\Rightarrow
\BB\beta(\varphi',F')\ \BB\beta(\pi',P')$ such that the induced
square
$$
\xymatrix@R=20pt@C=40pt{
\BB\beta\big((\varphi,F)\!\downarrow\!(\varphi',F')\big)\ar[r]^(.57){\BB\beta(\pi',P')}
\ar@{}@<60pt>[d]|(.4){\Rightarrow}
\ar[d]_{\BB\beta(\pi,P)}
& \BB\beta(\GG',\PP',\partial)
      \ar[d]^{\BB\beta(\varphi',F')}
\\
      \BB\beta(\GG,\PP,\partial)\ar[r]^{\BB\beta(\varphi,F)}&\BB\beta(\HH,\QQ,\partial)
    }
$$
is a homotopy pullback. It follows that the square
$(\ref{ifunsqucro})$ is also a homotopy pullback since, by Lemma
\ref{lehebgn}, it is homotopy equivalent to the square above.

The implications $(i)\Rightarrow (ii)\Rightarrow (iii)$ are clear,
and $(iv)$ follows from Corollary \ref{corome}, as
$(\GG,\PP,\partial)(a)=\beta(\GG,\PP,\partial)(a,a)$ and
$\BB(\GG,\PP,\partial)\simeq \BB\beta(\GG,\PP,\partial)$.
\end{proof}

We can easily show how the construction
$(\varphi,F)\!\downarrow\!(\varphi',F')$ works on basic examples
(see below).

\begin{example}\label{exfin} $(i)$ Let
  $\xymatrix@C=15pt{P\ar[r]^-{F}&Q&P'\ar[l]_(.4){F'}} $ be homomorphisms
  of groups. These induce homomorphisms of crossed modules of groups
  $\xymatrix{(1,P,1)\ar[r]^{(1,F)}&(1,Q,1)&(1,P',1)\ar[l]_{(1,F')},} $
  whose homotopy pullback crossed module is
  $(1,F)\!\downarrow\!(1,F')=(1,F\!\downarrow\!F',1)$, where $F\!\downarrow\!F'$ is the groupoid
  having as objects the elements $q\in Q$ and as morphisms
  $(p,p'):q_0\to q_1$ those pairs $(p,p')\in P\times P'$ such that
  $q_1\cdot Fp=F'p'\cdot q_0$. Thus, $(\ref{ifunsqucro})$
  particularizes by giving a homotopy pullback square
$$
\xymatrix@R=20pt@C=20pt{
\BB(F\!\downarrow\!F')\ar[r]\ar[d]&K(P',1)\ar[d]\\K(P,1)\ar[r]&K(Q,1).}
$$

$(ii)$ Let
$\xymatrix@C=15pt{A\ar[r]^-{\varphi}&B&A'\ar[l]_-{\varphi'}}$ be
homomorphisms of abelian groups. These induce homomorphisms of
crossed modules of groups $\xymatrix{(A,1,1)\ar[r]^-{(\varphi,1)}
&(B,1,1)&(A',1,1)\ar[l]_-{(\varphi',1)},} $ whose homotopy pullback
is the abelian crossed module of groups
$(\varphi,1)\!\downarrow\!(\varphi',1)=(A\times A',B,\partial)$,
where the coboundary map is given by
$\partial(a,a')=\varphi'a'-\varphi a$. Thus, $(\ref{ifunsqucro})$
particularizes by giving a homotopy pullback square
$$
\xymatrix@R=20pt@C=20pt{\BB(A\times
  A',B,\partial)\ar[r]\ar[d]&K(A',2)\ar[d]\\K(A,2)\ar[r]&K(B,2).}
$$
\end{example}

Let us stress that, as Example \ref{exfin}$(i)$ shows, the homotopy
pullback crossed module $(\varphi,F)\!\downarrow\!(\varphi',F')$ may
be a genuine crossed module of groupoids even in the case when both
$(\varphi,F)$ and $(\varphi',F')$ are morphisms between crossed
modules of groups. The reader can find in this fact a good reason to
be interested in the study of general crossed modules over
groupoids.

To finish, recall that the category of crossed complexes has a
closed model structure as described by Brown and Golasinski
~\cite{BG1989}. In this homotopy structure, a morphism of crossed
modules $(\varphi,F):(\GG,\PP,\partial)\to (\HH,\QQ,\partial)$ is a
weak equivalence if the induced map on classifying spaces
$\BB(\varphi,F)$ is a homotopy equivalence, and it is a fibration
(see Howie ~\cite{Howie1979}) whenever the following conditions
hold: $(i)$ $F:\PP\to\QQ$ is a fibration of groupoids, that is, for
every object $a\in \PP$ and every morphism $q:Fa\to b$ in $\QQ$,
there is a morphism $p:a\to a'$ in $\PP$ such that $Fp=q$, and
$(ii)$ for any object $a\in \PP$, the homomorphism
$\varphi:\GG(a)\to\HH(Fa)$ is surjective. Then, it is natural to ask
whether the constructed homotopy pullback crossed module
$(\varphi,F)\!\downarrow\!(\varphi',F')$ is actually a homotopy
pullback in the model category of crossed complexes. The answer is
positive as a consequence of the theorem below, and this fact
implies that the classifying space functor
$(\GG,\PP,\partial)\mapsto \BB(\GG,\PP,\partial)$ preserves homotopy
pullbacks.

\begin{theorem}\label{weakEquivalenceofHPB}
Let $\xymatrix@C=30pt{(\GG,\PP,\partial)\ar[r]^{(\varphi,F)}
  & (\HH,\QQ,\partial) &
  (\GG',\PP',\partial)\ar[l]_{(\varphi',F')}}$ be
  morphisms of crossed modules. If one of them is a fibration, then
  the canonical morphism $$
(\GG,\PP,\partial)\times_{(\HH,\QQ,\partial)}(\GG',\PP',\partial)\to
(\varphi,F)\!\downarrow\!(\varphi',F')$$ induces a homotopy
  equivalence
$
\BB\big((\GG,\PP,\partial)\times_{(\HH,\QQ,\partial)}(\GG',\PP',\partial)\big)
\simeq
  \BB\big((\varphi,F)\!\downarrow\!(\varphi',F')\big)$.
\end{theorem}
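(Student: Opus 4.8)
The plan is to show that, when one of the two morphisms is a fibration, the \emph{strict} pullback square of crossed modules induces a homotopy pullback on classifying spaces, and then to compare it with the homotopy-fibre product square, which already does so by Theorem \ref{mthpcm}$(i)$. I shall treat the case in which $(\varphi',F')$ is the fibration; the case in which $(\varphi,F)$ is a fibration is entirely analogous, applying Lemma \ref{simlem} to the other leg of the cospan (the pullback of simplicial sets being symmetric in its two legs). Write $(\GG,\PP,\partial)\times_{(\HH,\QQ,\partial)}(\GG',\PP',\partial)$ for the pullback crossed module, sitting in a \emph{strictly} commutative square over the cospan $(\GG,\PP,\partial)\overset{(\varphi,F)}\to (\HH,\QQ,\partial)\overset{(\varphi',F')}\leftarrow (\GG',\PP',\partial)$.

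First I would analyse this strict pullback after taking nerves. By \cite[Theorem 2.4]{BH1991} the nerve functor $N:\mathbf{Xmod}\to\mathbf{SimpSet}$ is right adjoint to $\Pi$, so it preserves all limits; in particular it carries the pullback crossed module to the pullback of simplicial sets
\begin{equation*}
N\big((\GG,\PP,\partial)\times_{(\HH,\QQ,\partial)}(\GG',\PP',\partial)\big)\cong
N(\GG,\PP,\partial)\times_{N(\HH,\QQ,\partial)}N(\GG',\PP',\partial).
\end{equation*}
Next I would use that the nerve of a fibration of crossed modules is a Kan fibration; this follows from the Brown--Golasinski model structure \cite{BG1989} together with Howie's description \cite{Howie1979} of the fibrations, $N$ being a right Quillen functor. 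Thus $g:=N(\varphi',F')$ is a Kan fibration, and for such a $g$ the hypothesis $(i)$ of Lemma \ref{simlem} holds: for any simplex $x:\Delta[n]\to N(\HH,\QQ,\partial)$ the pullback $g^{-1}(x)\to\Delta[n]$ is again a Kan fibration over a contractible base, hence its realization is fibre-homotopy trivial, and for any $\sigma:\Delta[m]\to\Delta[n]$ the realizations $|g^{-1}(x\sigma)|$ and $|g^{-1}(x)|$ are each homotopy equivalent to the common fibre, compatibly with the induced map. Hence, by Lemma \ref{simlem}$(iii)$ applied to $g$ and $f:=N(\varphi,F)$, the square obtained on geometric realizations,
\begin{equation*}
\xymatrix@C=20pt@R=20pt{
\BB\big((\GG,\PP,\partial)\times_{(\HH,\QQ,\partial)}(\GG',\PP',\partial)\big)\ar[r]\ar[d]
&\BB(\GG',\PP',\partial)\ar[d]^{\BB(\varphi',F')}\\
\BB(\GG,\PP,\partial)\ar[r]^{\BB(\varphi,F)}&\BB(\HH,\QQ,\partial),}
\end{equation*}
is a homotopy pullback.

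Finally I would compare the two squares. By Theorem \ref{mthpcm}$(i)$ the homotopy-fibre product square $(\ref{ifunsqucro})$ is a homotopy pullback over the very same cospan, and the canonical morphism $(\GG,\PP,\partial)\times_{(\HH,\QQ,\partial)}(\GG',\PP',\partial)\to (\varphi,F)\!\downarrow\!(\varphi',F')$ commutes with both projections $(\pi,P)$ and $(\pi',P')$. On classifying spaces it therefore yields a map of homotopy-commutative squares which is the identity on the three corners $\BB(\GG,\PP,\partial)$, $\BB(\GG',\PP',\partial)$, $\BB(\HH,\QQ,\partial)$ and is compatible with the structure homotopies. Consequently the two whisker maps into the standard homotopy-fibre product $\BB(\GG,\PP,\partial)\times^{_{\mathrm h}}_{\BB(\HH,\QQ,\partial)}\BB(\GG',\PP',\partial)$ are both homotopy equivalences, and the comparison map commutes with them up to homotopy; by two-out-of-three the induced map $\BB\big((\GG,\PP,\partial)\times_{(\HH,\QQ,\partial)}(\GG',\PP',\partial)\big)\to \BB\big((\varphi,F)\!\downarrow\!(\varphi',F')\big)$ is a homotopy equivalence, as required.

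The main obstacle I anticipate is the input that the nerve of a fibration of crossed modules is a Kan fibration. If one prefers not to rely on the right-Quillen property of $N$, this must be verified by hand from the explicit combinatorial description of the nerve (the systems $(g,p,a)$ recorded in the Remark above) together with Howie's two conditions, by checking the horn-filling conditions directly; this is the technical heart of the argument. A secondary point needing care is confirming that the comparison map is genuinely compatible with the structure homotopy of $(\ref{ifunsqucro})$, so that the final two-out-of-three argument on the whisker maps is legitimate.
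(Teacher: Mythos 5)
Your route is genuinely different from the paper's. The paper never leaves the algebra: it writes out the pullback crossed module $(\GG\times_{\HH F}\!\GG',\PP\times_\QQ\PP',\partial)$ and the canonical morphism $(\jmath,J)$ explicitly, and then verifies by hand, using the formula $(\ref{deshog})$ for the homotopy groups of the classifying space of a crossed module together with Howie's two fibration conditions (surjectivity of $\varphi$ on vertex groups and the groupoid-fibration property of $F$), that $(\jmath,J)$ induces a bijection on $\pi_0$ and isomorphisms on $\pi_1$ and $\pi_2$ at every base point. Since classifying spaces of crossed modules are homotopy $2$-types with CW homotopy type, this already gives the homotopy equivalence; no model structure, no Kan fibrations, and no appeal to Theorem \ref{mthpcm} are needed. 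Your proof instead runs through: $N$ preserves pullbacks (being right adjoint to $\Pi$), nerves of fibrations are Kan fibrations, Lemma \ref{simlem}$(iii)$, and a whisker-map comparison with the homotopy pullback square of Theorem \ref{mthpcm}$(i)$. What your approach buys is conceptual clarity (both squares are homotopy pullbacks over the same cospan) and the prospect of generalizing beyond $2$-types, e.g.\ to crossed complexes; what the paper's approach buys is that it is elementary and entirely self-contained.

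Two of your steps deserve scrutiny. The Kan-fibration input is fine: that nerves of crossed-complex fibrations in Howie's sense are Kan fibrations is a known, citable fact, and your fibre-comparison argument for verifying hypothesis $(i)$ of Lemma \ref{simlem} is correct. The genuine gap is the final step, which you flagged but did not resolve. Lemma \ref{simlem} shows that the strict square filled with the \emph{constant} homotopy has a whisker map $w_1$ that is an equivalence; but the composite $w_2\circ c$ of your comparison map with the whisker map of $(\ref{ifunsqucro})$ is the whisker map of the strict square filled with the \emph{restricted structure} homotopy. Two fillings of the same commutative square can have whisker maps in different homotopy classes (they differ by composition with loops in the path coordinate), so two-out-of-three does not close until one shows the restricted homotopy is homotopic rel endpoints to the constant one. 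This can be repaired: under the equivalence $\beta$ of $(\ref{equ2})$ everything is a strict $2$-groupoid, the image of the strict pullback in $\beta(\varphi,F)\!\downarrow\!\beta(\varphi',F')$ consists of cells whose middle components are identities, so the lax transformation $\omega$ used in $(\ref{2dia})$ restricts along $c$ to the identity transformation (its component $1$-cells and naturality $2$-cells all become identities because $Fp=F'p'$ and the unit constraints are trivial), and the homotopy that Lemma \ref{fact1}$(ii)$ and Corollary \ref{proffi} assign to an identity transformation is homotopic rel endpoints to the constant homotopy. But this verification, plus its transport across the homotopy equivalence of Lemma \ref{lehebgn}, is precisely the bookkeeping your write-up leaves open, and without it the concluding two-out-of-three argument is not justified.
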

\begin{proof}

Let us observe that the pullback crossed module of $(\varphi,F)$ and
$(\varphi',F')$ is
$$(\GG,\PP,\partial)\times_{(\HH,\QQ,\partial)}(\GG',\PP',\partial)=
(\GG\times_{\HH F}\!\GG',\PP\times_\QQ\PP', \partial),$$
where  $\PP\times_\QQ \PP'$ is the pullback groupoid of $F:\PP\to
\QQ$ and $F':\PP'\to \QQ$. The functor  $\GG\times_{_ F}\!\GG'
:\PP\times_\QQ \PP'\to \mathbf{Gp}$ is defined on objects by
$$
(\GG\times_{\HH F}\!\GG')(a,a')= \GG(a)\times_{\HH(Fa)}\GG'(a')=\{(g,g')\in \GG(a)\times\GG'(a')\mid \varphi_a(g)=\varphi'_{a'}(g')\},
$$
and the homomorphism associated to any morphism $(p,p'):(a,a')\to
(b,b')$ in $\PP\times_\QQ\PP'$ is given by
$^{(p,p')}(g,g')=({}^pg,{}^{p'}\!g') $. The boundary map $\partial:
\GG\times_{\HH F}\!\GG'\to \mathrm{Aut}_{\PP\times_\QQ\PP'}$, at any
object  of the groupoid $\PP\times_\QQ\PP'$, is given by the formula
$\partial(g,g')=(\partial g,\partial g')$.

The canonical morphism
\begin{equation}\label{canmor} (\jmath,J):
(\GG\times_{\HH F}\!\GG',\PP\times_\QQ\PP', \partial)\to
\big(\GG_{^{\varphi\mspace{-1mu},\mspace{-2mu}F\downarrow \mspace{-2mu}
\varphi'\mspace{-3mu},\mspace{-2mu}F'}},
\PP_{^{\!\varphi\mspace{-1mu},\mspace{-2mu}F\downarrow \mspace{-2mu}
\varphi'\mspace{-3mu},\mspace{-2mu}F'}},\partial\big)\end{equation} is as follows: The functor
$J\!:\PP\!\times_\QQ\PP'\to
\PP_{^{\!\varphi\mspace{-1mu},\mspace{-2mu}F\downarrow \mspace{-2mu}
\varphi'\mspace{-3mu},\mspace{-2mu}F'}}$ sends a morphism
$(p,p')\!:(a,a')\to \!(b,b')$ to the morphism $(p,1_{\HH(F
a)},p'):(a,1_{F a},a')\to (b,1_{F b},b')$, and the
$\PP\times_\QQ\PP'$-group homomorphism $\jmath:\GG\times_{\HH
F}\!\GG'\to
 \PP_{^{\!\varphi\mspace{-1mu},\mspace{-2mu}F\downarrow \mspace{-2mu}
\varphi'\mspace{-3mu},\mspace{-2mu}F'}} J$ is  given at any object
$(a,a')\in \PP\times_\QQ\PP'$ by the inclusion map
$\GG(a)\times_{\HH(F a)} \GG'(a')\hookrightarrow
\GG(a)\times\GG'(a') $.

Next, we assume that  $(\varphi, F)$ is a fibration. Then, we
verify that the canonical morphism $(\ref{canmor})$ induces
isomorphisms between the corresponding homotopy groups. Recall from
$(\ref{deshog})$ how to compute the homotopy groups of the
classifying space of a crossed module.

\noindent $\bullet$ {\em The map $\pi_0(\jmath,J)$ is a bijection.}

Injectivity: Suppose  objects $(a,a'),(b,b')\in \PP\times_\QQ\PP'$,
such that there is a morphism
$(p,h,p'):(a,1_{Fa},a')\to(b,1_{Fb},b')$ in
$\PP_{^{\!\varphi\mspace{-1mu},\mspace{-2mu}F\downarrow
\mspace{-2mu} \varphi'\mspace{-3mu},\mspace{-2mu}F'}}$. Then, as
 $\varphi: \GG(a)\to \HH (F a)$ is surjective, there is
$g\in\GG(a)$ such that $\varphi(g)=h$, whence $(p\circ \partial
g,p'):(a,a')\to (b,b')$ is a morphism in $\PP\times_\QQ\PP'$.

Surjectivity: Let $(a,q,a')$ be an object of
$\PP_{^{\!\varphi\mspace{-1mu}, \mspace{-2mu}F\downarrow
\mspace{-2mu} \varphi'\mspace{-3mu},\mspace{-2mu}F'}}$.  As
 $F:\PP\to \QQ$ is a fibration of groupoids, there is a morphism
$p:a\to b$ in $\PP$ such that $Fp=q$. Then, $(b,a')$ is an object of
the groupoid $\PP\times_\QQ\PP'$ with $J(b,a')=(b,1_{Fb},a')$ in the
same connected component of $(a,q,a')$, since we have the morphism
$(p, 1_{\HH(Fb)},1_{a'}):(a,q,a')\to (b,1_{F{b}},a')$.

\noindent $\bullet$ {\em The homomorphisms $\pi_1(\jmath,J)$ are
isomorphisms.} Let $(a,a')$ be any object of $\PP\times_\QQ\PP'$.

Injectivity: Let $[(p,p')]$ be an element in the kernel of the
homomorphism $\pi_1(\jmath,J)$ at $(a,a')$, that is, such that
$[(p,1_{\HH(F
  a)} ,p')]=[(1_a,1_{\HH(F a)} ,1_{a'})]$. This means that
there is $(g,g')\in\GG(a)\times\GG'(a')$ with $\partial g = p$,
$\partial (g') = p'$ and $\varphi (g)^{-1}\cdot \varphi'(g')=1$. The
last equation says that $(g,g')$ is an element of
$\GG(a)\times_{\HH(Fa)}\GG'(a)$ which, by the first two, satisfies
that $\partial(g,g')=(p,p')$. Hence,  $[(p,p')]=[(1_a,1_{a'})]$.

Surjectivity: Let $(p,h,p'):(a,1_{Fa},a')\to (a,1_{Fa},a')$ be an
automorphism of
$\PP_{^{\!\varphi\mspace{-1mu},\mspace{-2mu}F\downarrow
\mspace{-2mu} \varphi'\mspace{-3mu},\mspace{-2mu}F'}}$. As
 $\varphi: \GG(a)\to \HH (F a)$ is surjective, there is a $g\in\GG(a)$ such
that $\varphi(g)=h$. Then, we have
\begin{align}\nonumber
(p,h,p')^{-1}\circ J(p\circ \partial g,p')&=(p,h,p')^{-1}\circ (p\circ\partial g,1_{\HH(Fa)},p')\\
\nonumber
 &=(p^{-1}\circ p\circ \partial g, h^{-1},p'^{-1}\circ p')\\ \nonumber
&=(\partial g,\varphi(g)^{-1}\cdot 1_{\HH(Fa)},1_{a'})=\partial (g,1_{\GG'(a')})
\end{align}
and therefore $[(p,h,p')]=[J(p\circ \partial g,p')]$.

\noindent $\bullet$ {\em The homomorphisms $\pi_2(\jmath,J)$ are
isomorphisms. } At any object $(a,a')\in\PP\times_\QQ\PP'$, the
homomorphism $\pi_2(\jmath,J)$ is the restriction to the kernels of
the boundary maps of the inclusion
$\GG(a)\times_{\HH(Fa)}\GG'(a')\hookrightarrow
\GG(a)\times\GG'(a')$. Then, it is clearly injective.  To see the
surjectivity,  let
 $(g,g')\in \GG(a)\times \GG'(a')$ with $\partial
(g,g')=(1_a,1_{\HH (Fa)},1_{a'})$.  Then, we have $\partial g=1_a$,
$\partial g'=1_{a'}$ and $\varphi( g)^{-1}\cdot \varphi'(
g')=1_{\HH(Fa)}$. That is, that $(g,g')\in
\GG(a)\times_{\HH(Fa)}\GG'(a')$ and $(\partial g,\partial
g')=(1_a,1_{a'})$.
\end{proof}

\section{Appendix: Proofs of Lemmas \ref{fact1} and \ref{facts2}}\label{appendix}

We shall only address lax functors below, but the discussions are
easily dualized in order to obtain the corresponding results for
oplax functors.

Our first goal is to accurately determine the functorial behaviour
of the Grothendieck nerve construction $\B\mapsto \ner\B$
(\ref{GrothendieckNerve}) on lax functors between bicategories by
means of the theorem below. The result in the first part of it is
already known: see ~\cite[\S 3, (21)]{CCG2010}, where a proof is
given using Jardine's Supercoherence Theorem in ~\cite{Jardine1991}.
However, for the second part, we need a new proof of
 the existence of the  pseudo-simplical category $\ner\B$, with a
more explicit and detailed construction of it.

\begin{theorem}\label{groner}  $(i)$ Any bicategory $\B$ defines a normal
  pseudo-simplicial category, that is, a unitary pseudo-functor from
  the simplicial category $\Delta^{\!{\mathrm{op}}}$ into the
  $2$-category of small categories,
$$
\ner\B=(\ner\B,\widehat{\ner}\B):\Delta^{\!{\mathrm{op}}}\ \to \Cat,
$$
which is called the {\em Grothendieck} or {\em pseudo-simplicial
nerve} of the bicategory, whose category of $p$-simplices, for
$p\geq 0$, is {\em $$ \ner\B_{p}:=
\hspace{-0.2cm}\bigsqcup_{(x_p,\ldots,x_0)\in \text{\scriptsize
    Ob}\B^{p+1}}\hspace{-0.3cm}
\B(x_{p-1},x_p)\times\B(x_{p-2},x_{p-1})\times\cdots
  \times\B(x_0,x_1).
$$
}
$(ii)$ Any lax functor between bicategories
  $F:\B\to\B'$ induces a lax transformation (i.e., a lax simplicial
  functor)
$$
\ner F=(\ner F,\widehat{\ner}F):\ner\B\to\ner \B'.
$$

For any pair of composable lax functors $F:\B\to \B'$ and $F':\B'\to
\B''$, the equality $\ner F'\, \ner F=\ner (F'F)$ holds, and, for
any bicategory $\B$, $ \ner 1_\B= 1_{\ner\B}$.
\end{theorem}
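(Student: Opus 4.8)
The plan is to build $\ner\B$ by hand and then offload every coherence verification onto the coherence theorem for bicategories. First I would fix the categories of simplices as stated: objects of $\ner\B_p$ are strings $(f_p,\dots,f_1)$ of composable $1$-cells $f_i\colon x_{i-1}\to x_i$, and morphisms are strings $(\beta_p,\dots,\beta_1)$ of $2$-cells, composed vertically componentwise; functoriality is immediate because each $\B(x,y)$ is a category. For a monotone map $\alpha\colon[q]\to[p]$ I would define $\alpha^*\colon\ner\B_p\to\ner\B_q$ by letting the $j$-th $1$-cell of $\alpha^*\xi$ be the horizontal composite $f_{\alpha(j)}\circ\cdots\circ f_{\alpha(j-1)+1}$ read with one fixed bracketing (say the left-normalised one), with the empty composite (when $\alpha(j)=\alpha(j-1)$) set equal to $1_{x_{\alpha(j)}}$; on $2$-cells $\alpha^*$ is the corresponding horizontal composite. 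In these terms the inner faces are horizontal compositions, the outer faces are truncations, and the degeneracies insert identity $1$-cells, exactly as in the classical nerve.

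The heart of part $(i)$ is the comparison data. For composable $\beta\colon[r]\to[q]$ and $\alpha\colon[q]\to[p]$, the functors $\beta^*\alpha^*$ and $(\alpha\beta)^*$ send a string to reindexings of the \emph{same} $1$-cells differing only in bracketing and in inserted identities, so I would define $\widehat{\ner}\B_{\alpha,\beta}\colon\beta^*\alpha^*\Rightarrow(\alpha\beta)^*$ as the canonical isomorphism assembled from the constraints $\aso,\bl,\br$. Existence and uniqueness of such a canonical $2$-cell, and hence well-definedness of $\widehat{\ner}\B$, is precisely the coherence theorem for bicategories; the same theorem makes the two pseudofunctor axioms (the associativity condition for three composable maps of $\Delta$ and the two unit conditions) hold automatically, since in each case the competing composites are parallel $2$-cells built from constraints alone and therefore coincide. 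Normality is the direct observation that $\mathrm{id}^{*}$ is literally the identity functor and that every structure isomorphism one of whose indices is an identity reduces to an identity $2$-cell.

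For part $(ii)$ I would set $(\ner F)_p(f_p,\dots,f_1)=(Ff_p,\dots,Ff_1)$ and apply $F$ componentwise to $2$-cells; this is functorial because a lax functor restricts to a functor on each hom-category. The comparison $2$-cells of the lax simplicial map arise because $\alpha^{*}$ followed by $F$ yields $F(f_{\alpha(j)}\circ\cdots\circ f_{\alpha(j-1)+1})$, whereas $F$ followed by $\alpha^{*}$ yields $Ff_{\alpha(j)}\circ\cdots\circ Ff_{\alpha(j-1)+1}$; iterating the lax constraints $\widehat{F}$ along the chosen bracketing produces a canonical $2$-cell $\alpha^{*}(\ner F)_p\Rightarrow(\ner F)_q\alpha^{*}$, which I take as $\widehat{\ner}F_\alpha$ (its direction is forced by the laxity of $F$, and these $2$-cells are generally non-invertible, so $\ner F$ is genuinely lax). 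The axioms for this lax transformation reduce, via bicategorical coherence together with the coherence axioms relating $\widehat{F}$ to $\aso,\bl,\br$, to equalities of parallel canonical pastings, and so hold. Finally $\ner F'\,\ner F=\ner(F'F)$ and $\ner 1_\B=1_{\ner\B}$ are a bookkeeping check: the constraints of the composite lax functor $F'F$ are exactly the prescribed pasting of those of $F$ and $F'$, and unwinding the definitions shows the two lax simplicial functors agree on all simplices and all comparison $2$-cells.

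The main obstacle is not any single computation but the systematic reduction of all coherence conditions to the coherence theorems: in each case one must confirm that the $2$-cells being compared really are parallel and really are generated by the structure constraints (of $\B$, or of $F$) alone, so that coherence applies verbatim. Making the bracketing convention and the treatment of empty (identity) composites completely explicit at the outset is what lets this reduction run uniformly across faces, degeneracies, and general $\alpha$; this explicitness is the ``more explicit and detailed construction'' the statement promises, and it is exactly what the proof in \cite{CCG2010} via Jardine's Supercoherence Theorem leaves implicit.
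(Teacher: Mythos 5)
Your construction produces the same nerve as the paper's --- your $\alpha^*$ with its fixed bracketing is, up to the choice of bracketing convention, the paper's composite $R_q\alpha^*J_p$, where $J_p$ is the iterated-composition homomorphism $\overset{_\mathrm{or}}\circ$ --- but your verification strategy is genuinely different. The paper invokes no coherence theorem at any point. Instead it isolates every inductive constraint computation inside Lemma \ref{rufc}: for $\mathcal{I}$ free on a graph $\mathcal{G}$, the restriction functor $R:\brep(\mathcal{I},\B)\to\brepg(\mathcal{G},\B)$ has a section $J$ which is left adjoint to it, with identity unit and with counit $\v$ satisfying $\v J=1_J$ and $R\v=1_R$. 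All structure cells of $\ner\B$ and of $\ner F$ are then \emph{defined} from $\v$ (e.g.\ $\widehat{\ner}\B_{a,b}=R_nb^*\v_qa^*J_p$ and $\widehat{\ner}F_a=R'_qa^*\v'_pF_*J_p$), and every axiom of parts $(i)$ and $(ii)$, including the strict equalities $\ner F'\,\ner F=\ner(F'F)$ and $\ner 1_\B=1_{\ner\B}$, is derived by short equational manipulations of $RJ=1$, $\v J=1$, $R\v=1$, naturality of $\v$, and the identities $(\ref{newiqu})$ and $(\ref{commutativeDiagram1})$. What your route buys is concreteness; what the paper's buys is that part $(ii)$ --- the reason the authors redid the proof of \cite{CCG2010} at all --- comes out formally, with the only diagram chases confined to one lemma.

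The genuine gap is in your part $(ii)$. The lax-simplicial axioms for $\ner F$ equate parallel pastings that contain the cells $\widehat{F}_{g,f}$ and $\widehat{F}_b$. These are non-invertible and are data beyond the constraints of $\B$ and $\B'$, so the coherence theorem for bicategories --- which identifies parallel $2$-cells built from $\aso$, $\bl$, $\br$ alone --- does not apply to them, verbatim or otherwise. What you need is a coherence theorem for \emph{lax functors} (every diagram of pastings of $\aso,\bl,\br$ and $\widehat{F}$ commutes), in the spirit of Epstein's coherence for monoidal functors or of Jardine's supercoherence theorem \cite{Jardine1991}; your phrase ``the coherence axioms relating $\widehat{F}$ to $\aso,\bl,\br$'' names the hypotheses of that theorem, not a proof of it, so as written the step ``and so hold'' is circular. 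A related caveat affects your closing functoriality check: $\widehat{\ner}(F'F)_\alpha$ agrees with the pasting of $\widehat{\ner}F'_\alpha$ and $\widehat{\ner}F_\alpha$ by an induction on string length using the naturality of $\widehat{F}'$ and the functoriality of $F'$ on hom-categories, not by literal unwinding of definitions. Both repairs are available --- cite (or prove, by that same induction) lax-functor coherence, or verify the axioms inductively, which is what the paper in effect does through the naturality of $\v$ --- and with either one your argument becomes complete.
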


Before starting with the proof, we shall describe some needed
constructions and a few auxiliary facts.
 Given a category $\mathcal{I}$ and a bicategory $\B$, we denote by
$$
\brep(\mathcal{I},\B)
$$
the category whose objects are lax functors $F:\mathcal{I}\to \B$,
and whose morphisms are {\em relative to object lax
transformations}, as termed by Bullejos and Cegarra in
~\cite{BC2003}, but also called {\em  icons} by Lack in
~\cite{Lack2010}. That is, for any two lax functors
$F,G:\mathcal{I}\to \B$, a morphism $\Phi:F\Rightarrow G$ may exist
only if $F$ and $G$ agree on objects, and it is then given by
2-cells in $\B$, $\Phi a:Fa\Rightarrow Ga$, for every arrow $a:i\to
j$ in $\mathcal{I}$, such that the diagrams
$$\begin{array}{ll}
  \xymatrix@R=20pt@C=15pt{Fa\circ  Fb\ar@{=>}[r]^{\widehat{F}_{a,b}}
             \ar@{=>}[d]_{\Phi
      a\circ  \Phi b}&F(ab) \ar@{=>}[d]^{\
      \Phi(ab)}\\
    Ga\circ  Gb\ar@{=>}[r]^{\widehat{G}_{a,b}}&G(ab),}&
  \xymatrix@R=20pt@C=5pt{&1_{Fi=Gi}\ar@{=>}[ld]_{\widehat{F}_i}
    \ar@{=>}[rd]^{\widehat{G}_i}& \\
    F1_i\ar@{=>}[rr]^{\Phi{1_i}}&&G1_i,
  }
\end{array}
$$
commute for each pair of composable arrows $i\overset{ b}\to
j\overset{ a}\rightarrow k$ and each object $i$. The composition of
morphisms $\Phi:F\Rightarrow G$ and $\Psi:G\Rightarrow H$, for
$F,G,H:\mathcal{I}\to \B$ lax functors, is
$\Psi\cdot\Phi:F\Rightarrow H$, where $(\Psi\cdot\Phi) a=\Psi
a\cdot\Phi a:Fa\Rightarrow Ha$, for each arrow $a:i\to j$ in
$\mathcal{I}$. The identity morphism of a lax functor
$F:\mathcal{I}\to \B$ is $1_F:F\Rightarrow F$, where
$(1_F)a=1_{Fa}$, the identity of $Fa$ in the category $\B(Fi,Fj)$,
for each $a:i\to j$ in $\mathcal{I}$.

Let us now replace the category $\mathcal{I}$ above by a (directed)
graph $\mathcal{G}$. For any bicategory $\B$, there is a category
$$
\brepg(\mathcal{G},\B),
$$
where an object  $f:\mathcal{G}\to \B$ consists of a pair of maps
that assign an object $fi$ to each vertex $i\in \mathcal{G}$ and a
1-cell $fa:fi\to fj$ to each edge $a:i\to j$ in $\mathcal{G}$,
respectively. A morphism $\phi:f\Rightarrow g$ may exist only if $f$
and $g$ agree on vertices, that is, $fi=gi$ for all $i\in
\mathcal{G}$; and then it consists of a map that assigns to each
edge  $a:i\to j$ in the graph a 2-cell $\phi a:fa\Rightarrow ga$ of
$\B$. Compositions in $\brepg(\mathcal{G},\B)$ are defined in the
natural way by the same rules as those stated above for the category
$\brep(\mathcal{I},\B)$.

\begin{lemma}\label{rufc} Let $\mathcal{I}=\mathcal{I}(\mathcal{G})$ be the free category
  generated by a graph $\mathcal{G}$, let $\B$ be a bicategory, and let
$$
R: \brep(\mathcal{I}(\mathcal{G}),\B)\to \brepg(\mathcal{G},\B)
$$
be the functor defined by restriction to the basic graph. Then,
there is a functor
$$
J:\brepg(\mathcal{G},\B)\to   \brep(\mathcal{I},\B),
$$
and a natural transformation
\begin{equation}\label{counit}
  \v:JR\Rightarrow 1_{\brep(\mathcal{I},\B)},
\end{equation}
such that $RJ=1_{\brepg(\mathcal{G},\B)},\ \v J=1_{J},\ R\v=1_{R}$.
Thus, the functor $R$ is right adjoint to the functor $J$.
\end{lemma}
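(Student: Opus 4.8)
The plan is to build $J$ by fixing, once and for all, a canonical parenthesization of horizontal composites and letting all the higher data be supplied by coherence. First I would define $J$ on objects: for $f\in\brepg(\mathcal{G},\B)$ set $(Jf)i=fi$ on each vertex, $(Jf)(1_i)=1_{fi}$ on each empty path, and on a nonempty path $a=a_n\cdots a_1$ of $\mathcal{G}$ let $(Jf)(a)$ be the right-normalized horizontal composite of $fa_n,\dots,fa_1$. I take $Jf$ to be normal, so that $\widehat{Jf}_i=1_{1_{fi}}$, and I define each composition constraint $\widehat{Jf}_{a,b}\colon (Jf)a\circ(Jf)b\Rightarrow (Jf)(ab)$ to be the unique coherence isomorphism of $\B$ (a pasting of associativity constraints $\aso^{\pm1}$ and unit constraints $\bl,\br$) that re-parenthesizes the formal composite of the edges of $a$ and of $b$ into the normal form of the concatenated path. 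That these data are well defined and satisfy the lax-functor coherence axioms is exactly the content of the coherence theorem for bicategories, since any two pastings of coherence cells with a common source and target agree.

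On morphisms, for $\phi\colon f\Rightarrow g$ in $\brepg(\mathcal{G},\B)$ I would set $(J\phi)(a)=\phi a_n\circ\cdots\circ\phi a_1$ (and $(J\phi)(1_i)=1_{1_{fi}}$), the horizontal composite taken with the same normal bracketing. Functoriality of the horizontal composition functors of $\B$ shows at once that $J\phi$ respects vertical composition, so that $J(\psi\cdot\phi)=J\psi\cdot J\phi$ and $J1_f=1_{Jf}$; the two icon diagrams for $J\phi$ then reduce to the naturality of $\aso,\bl,\br$ together with the defining property of the constraints $\widehat{Jf}_{a,b}$. By construction $(Jf)(a)=fa$ and $(J\phi)(a)=\phi a$ for every edge $a$, while $Jf$ and $J\phi$ agree with $f$ and $\phi$ on vertices; hence $RJ=1_{\brepg(\mathcal{G},\B)}$ as a strict equality of functors.

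Next I would produce the counit. For a lax functor $F\colon\mathcal{I}\to\B$ the composite $JRF$ has the same values as $F$ on vertices and edges, but sends a path $a=a_n\cdots a_1$ to the normally-bracketed formal composite of $Fa_n,\dots,Fa_1$. I define the icon $\v_F\colon JRF\Rightarrow F$ by letting $\v_F(a)\colon(JRF)(a)\Rightarrow F(a)$ be the canonical comparison $2$-cell obtained by iterating the structure constraints $\widehat{F}$ (rebracketing with the constraints of $\B$ as needed); in particular $\v_F(1_i)=\widehat{F}_i$ and $\v_F(a)=1_{Fa}$ for every edge $a$. The coherence axioms for the lax functor $F$ guarantee that $\v_F(a)$ is independent of the order in which the $\widehat{F}$'s are inserted, that the icon diagrams for $\v_F$ commute, and — via the icon axiom for an arbitrary $\Phi\colon F\Rightarrow G$ — that $\v$ is natural in $F$.

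Finally I would check the triangle identities, which is where the choices pay off. Since $\v_F(a)=1_{Fa}$ on every edge, restriction gives $R\v_F=1_{RF}$, i.e.\ $R\v=1_R$. For $\v J=1_J$ note that $RJf=f$, so $JRJf=Jf$ and each component $\v_{Jf}(a)\colon (Jf)(a)\Rightarrow(Jf)(a)$ has equal source and target; moreover $\v_{Jf}(a)$ is a pasting of the constraints $\widehat{Jf}_{?,?}$, which are themselves coherence cells, so it is a coherence endomorphism of $(Jf)(a)$ and therefore the identity. Thus $\v J=1_J$, and with unit the identity $1\Rightarrow RJ$ and counit $\v$ the triangle identities exhibit $J$ as left adjoint to $R$. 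The main obstacle throughout is not computational but organizational: one must be confident that every comparison $2$-cell is well defined and that each required pentagon- or triangle-type equation is an instance of coherence, so that no genuinely new calculation is needed.
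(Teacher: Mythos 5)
Your proposal is correct and follows essentially the same route as the paper's proof: $J$ is defined by right-normalized iterated horizontal composites (so that $RJ=1_{\brepg(\mathcal{G},\B)}$ holds strictly), the counit $\v_F$ has components $\widehat{F}_i$ on identities, identities on edges, and iterated insertions of the constraints $\widehat{F}_{a,b}$ on longer paths, and the triangle identities $R\v=1_R$ and $\v J=1_J$ then exhibit the adjunction exactly as in the paper. The only difference is one of style, not substance: where you appeal to coherence (for bicategories, and a coherence-type argument for lax functors) to get well-definedness of $\widehat{Jf}_{a,b}$ and $\v_F(a)$ and the icon axioms, the paper instead fixes explicit recursive definitions of these $2$-cells and verifies the same axioms by induction on path length, using the naturality and coherence of $\aso$, $\bl$, $\br$.
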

\begin{proof} To describe the functor $J$, we use the following
  useful construction: For any list $(x_0,\dots,x_p)$ of objects in
  the bicategory $\B$, let
$$
  \xymatrix{
    \overset{_\mathrm{or}}\circ:\B(x_{p-1},x_p)\times\B(x_{p-2},x_{p-1})
    \times\cdots\times\B(x_0,x_1)
    \longrightarrow \B(x_0,x_p)}
$$
denote the functor obtained by iterating horizontal composition in
the bicategory, which acts on objects and arrows of the product
category by the recursive formula
$$
\overset{_\mathrm{or}}\circ(u_p,\dots,u_1)=
\left\{\begin{array}{lll}u_1&\text{if}& p=1,\\[3pt]
    u_p\circ\big(\overset{_\mathrm{or}}\circ(u_{p-1},\dots,u_1)\big)&
    \text{if}&p\geq 2.\end{array}\right.
$$

Then, the homomorphism $J$ takes a graph map, say $f:\mathcal{G}\to
\B$, to the unitary pseudo-functor from the free category
$$
J(f)=F:\mathcal{I}\to \B,
$$
such that $ Fi=fi$, for any vertex $i$ of $\mathcal{G}$ (= objects
of $\mathcal{I}$), and associates to strings $ a:a(0)\overset{
a_{1}}\to \cdots \overset{ a_{p}}\to a(p) $ in $\mathcal{G}$ the
1-cells $Fa= \overset{_\mathrm{or}}\circ(fa_p,\dots,fa_1):fa(0)\to
fa(p)$.  The structure 2-cells $\widehat{F}_{a,b}:Fa\circ Fb
\Rightarrow F(ab)$, for any pair of strings in the graph,
$a=a_p\cdots a_1$ as above and $ b=b_q\cdots b_1$ with $b(q)=a(0)$,
are canonically obtained from the associativity constraints in the
bicategory: first by taking $\widehat{F}_{a_1,b}=1_{F(a_1b)}$ when
$p=1$ and then, recursively for $p>1$, defining $\widehat{F}_{a,b}$
as the composite
$$
\xymatrix@C=35pt{\widehat{F}_{a,b}:\ Fa\circ Fb \overset{
    \boldsymbol{a}}\Longrightarrow Fa_p\circ ( Fa'\circ
  Fb)\ar@{=>}[r]^-{1\circ \widehat{F}_{a',b} } &F(ab),}
$$
where $a'=a_{p-1}\cdots a_1$ (whence $Fa=Fa_p\circ  Fa'$). The
coherence conditions for $F$ are easily verified by using the
coherence and naturality of the associativity constraint $\aso$ of
the bicategory.

Any morphism $\phi:f \Rightarrow g$ in $\brepg(\mathcal{G},\B)$ is
taken by $J$ to the morphism $J(\phi):F\Rightarrow G$ of
$\brep(\mathcal{I},\B)$, consisting of the 2-cells in the bicategory
$ \overset{_\mathrm{or}}\circ (\phi{a_p},\dots,\phi{a_1}):Fa
\Rightarrow Ga$, attached to the strings of adjacent edges in the
graph $ a=a_p\cdots a_1$. The coherence conditions of $J(\phi)$ are
 consequence of the naturality
of the associativity constraint $\aso$ of the bicategory. If
$\phi:f\Rightarrow g$ and $\psi: g\Rightarrow h$ are 1-cells in
$\brepg(\mathcal{G},\B)$, then  $J(\psi)\cdot J(\phi) = J(\psi\cdot
\phi)$ follows from the functoriality of the composition $\circ$,
and so $J$ is a functor.

The lax transformation $\v$ is defined as follows: The component of
this lax transformation at a lax functor $F:\mathcal{I}\to \B$,
$\v:JR(F)\Rightarrow F$, is defined on identities by
$\v1_i=\widehat{F}_i:1_{Fi}\Rightarrow F1_i$, for any vertex $i$ of
$\mathcal{G}$, and it associates to each string of adjacent edges in
the graph $a=a_p\cdots a_1$ the 2-cell $ \v
a:\overset{_\mathrm{or}}\circ (Fa_p,\dots,Fa_1)\Rightarrow Fa $,
which is given by taking $\v{ a_1}=1_{Fa_1}$ if $p=1$, and then,
recursively for $p>1$, by taking $\v{a}$ as the composite
$$
\v a=\xymatrix@C=25pt{\big(\overset{_\mathrm{or}}\circ
  (Fa_p,\dots,Fa_1) \ar@{=>}[r]^-{1\circ \v{a'} } &Fa_p\circ
  Fa'\ar@{=>}[r]^-{\widehat{F}_{a_p,a'}}&Fa\big),}$$
where $a'=a_{p-1}\cdots a_1$. The naturality condition
$\widehat{F}_{a,b}\circ (\v a\circ \v b)= \v(ab)\circ
\widehat{JR(F)}_{a,b}$, for any pair of composable morphisms in
$\mathcal{I}$, can be checked as follows: when $a=1_i$ or $b=1_j$
are identities, then it is a consequence of the commutativity of the
diagrams
$$
\xymatrix@C=0pt@R=12pt{ &1_{Fi}\circ
  JR(F)b\ar@{=>}[rr]^{\boldsymbol{l}} \ar@{=>}[rd]^(.55){1\circ \v b}
  \ar@{=>}[dd]_{\widehat{F}_i\circ \v b}&&JR(F)b\ar@{=>}[dd]^{\v b}\\
  &\ar@{}@<4pt>[r]|(.4){(A)} &1_{Fi}\circ
  Fb\ar@{=>}[rd]^{\boldsymbol{l}}
  \ar@{=>}[ld]_{\widehat{F}_i\circ  1}\ar@{}[ru]|{(B)}\ar@{}[d]|{(C)}& \\
  &F1_i\circ Fb\ar@{=>}[rr]_{\widehat{F}_{1_i,b}}&&Fb, } \hspace{0.3cm}
\xymatrix@C=0pt@R=12pt{ &JR(F)a\circ
  1_{Fj}\ar@{=>}[rr]^{\boldsymbol{r}} \ar@{=>}[rd]^(.55){\v a\circ 1}
  \ar@{=>}[dd]_{\v a\circ \widehat{F}_j}&&JR(F)a
  \ar@{=>}[dd]^{\v a}\\
  &\ar@{}@<4pt>[r]|(.4){(A)}&Fa\circ
  1_{Fj}\ar@{=>}[rd]^{\boldsymbol{r}}
  \ar@{=>}[ld]_{1\circ  \widehat{F}_j}\ar@{}[ru]|{(B)}\ar@{}[d]|{(C)}& \\
  &Fa\circ F1_j\ar@{=>}[rr]_{\widehat{F}_{a,1_i}}&&Fa, }
$$
where the regions labelled with $(A)$ commute by the functoriality
of $\circ$, those with $(B)$ by the naturality of $\boldsymbol{l}$
and $\boldsymbol{r}$, and those with $(C)$ by the coherence of $F$.
Now, for arbitrary strings $a$ and $b$ in the graph with
$b(q)=a(0)$, we study the coherence recursively on the length of
$a$. The case when $p=1$ is the obvious commutative diagram
$$
\xymatrix@C=15pt@R=20pt{Fa_1\circ JR(F)b\ar@{=>}[d]_-{1\circ \v
    b}\ar@{=>}[r]^-{1}&
  JR(F)(a_1b)\ar@{=>}[d]^{\v(a_1b)=\widehat{F}_{a_1\!,b}\cdot (1\circ  \v b)}\\
  Fa_1\circ Fb\ar@{=>}[r]_-{\widehat{F}_{a_1,b}}&F(a_1b),}
$$
and then, for $p>1$, the result is a consequence of the diagram
$$
\xymatrix@C=40pt@R=20pt{
 JR(F)a\circ JR(F)b \ar@{=>}[r]^-{\boldsymbol{a}}
                   \ar@{=>}[d]_{(1\circ \v a')\circ \v b}
&Fa_p\!\circ \!(JR(F)a'\!\circ \! JR(F)b) \ar@{}[rd]|{ (B)}
             \ar@{=>}[d]|{1\circ (\v a'\circ \v b)}
             \ar@{=>}[r]^-{1\circ \widehat{JR(F)}_{a'\!,b}}
             \ar@{}[ld]|{(A)}
&JR(F)(ab)  \ar@{=>}[d]^{1\circ  \v (a'b)}
\\(Fa_p\!\circ \! Fa')\!\circ \! Fb\ar@{}[rrd]|{ (C)}
       \ar@{=>}[r]^-{\boldsymbol{a}}
       \ar@{=>}[d]_{\widehat{F}_{a_p\!,a'}\circ 1}
&Fa_p\!\circ \!(Fa'\!\circ \!Fb)\ar@{=>}[r]_{1\circ\widehat{F}_{a'\!,b}}
&Fa_p\circ F(a'b)  \ar@{=>}[d]^{\widehat{F}_{a_p\!,a'b}}
\\  Fa\circ Fb\ar@{=>}[rr]^(.4){\widehat{F}_{a,b}}
&& F(ab) }
$$
where $(A)$ commutes by the naturality of $\boldsymbol{a}$, $(B)$ by
induction, and $(C)$ by the coherence of $F$.

To verify the equalities $RJ=1$, $\v J=1$, and $R\v=1$ is
 straightforward.\end{proof}

Let  $\mathcal{I}=\mathcal{I}(\mathcal{G})$ again be the free
category generated by a graph $\mathcal{G}$, as in Lemma \ref{rufc}
above, and suppose now that $F:\B\to\B'$ is a lax functor. Then, the
square
\begin{equation}\label{twosquares}
\begin{array}{c}\xymatrix{
\brep(\mathcal{I},\B) \ar[d]_{F_*}\ar[r]^{R}&\brepg(\mathcal{G},\B)\ar[d]^{F_*}\\
\brep(\mathcal{I},\B')\ar[r]^{R'}&\brepg(\mathcal{G},\B') }
\end{array}
\end{equation}
commutes and, since $RJ=1$, we have the equalities
\begin{equation}\label{newiqu}
R'F_*JR=F_*RJR =F_*R=R'F_*.
\end{equation}
Furthermore, the naturality of $\v:JR\Rightarrow 1$ and
$\v':J'R'\Rightarrow 1$ means that the square
$$
\xymatrix@C=50pt{
J'R'F_*JR\ar@2[r]^{\v'F_*JR}\ar@{=>}[d]_{J'R'F_*\v}&F_*JR\ar@2[d]^{F_*\v}\\
J'R'F_*\ar@2[r]^{\v'F_*}&F_*
}
$$
commutes. As $R\v=1_R$ and then
$J'R'F_*\v=J'F_*R\v=J'F_*1_R=1_{J'R'F_*}$,  we have the equality
\begin{equation}\label{commutativeDiagram1}
F_*\v\circ \v'F_*JR =\v'F_*.
\end{equation}

\subsection{Proof of Theorem \ref{groner}.} $(i)$:
Let us note that, for any integer $p\geq 0$, the category $[p]$ is
free on the graph
$$
\mathcal{G}_p=(0\to 1 \cdots\to p).
$$
Then, for any given bicategory $\B$, the existence of an adjunction
\begin{equation}\label{plrg}
J_p\dashv R_p: \ner\B_p=\brepg(\mathcal{G}_p,\B) \rightleftarrows \brep([p],\B)
\end{equation}
follows from Lemma \ref{rufc}, where $R_p$ is the functor defined
by restricting to the basic graph $\mathcal{G}_p$ of the category
$[p]$, where $R_pJ_p=1$, whose unity is the identity, and whose
counit $\v_p:J_pR_p\Rightarrow 1$ satisfies the equalities $\v_p
J_p=1$ and $R_p\v_p=1$.

If $a:[q]\to [p]$ is any map in the simplicial category, then the
associated functor $\ner\B_{a}:\ner\B_{p}\to\ner\B_{q}$ is the
composite
$$
\begin{array}{l}
\xymatrix@R=16pt{ \ner\B_{p}\ar@{.>}[r]^{\ner\B_{a}}
\ar[d]_{J_p}
&\ner\B_{q}\\
\brep([p],\B)\ar[r]^{a^*}&\brep([q],\B). \ar[u]_{R_q}}
\end{array}
$$
Thus, $\ner\B_{a}$ maps the component category of $\ner\B_{p}$ at
$(x_p,\dots,x_0)$ into the component at $(x_{a(q)},\dots,x_{a(0)})$
of $\ner\B_{q}$, and it acts both on objects and morphisms of
$\ner\B_{p}$ by the formula
$\ner\B_{p}(u_p,\dots,u_1)=(v_q,\dots,v_1)$, where, for $0\leq k<
q$,
$$
v_{k+1}=\left\{\begin{array}{lll}
\xymatrix{\hspace{-4pt}\overset{_\mathrm{or}}\circ (u_{a(k+1)},
\ldots, u_{a(k)+1})}& \text{\em if} & a(k)<a(k+1),\\[6pt]
1& \text{\em if} & a(k)=a(k+1),\end{array}\right.
$$
whence, in particular, the usual formulas below for the face and
degeneracy functors.
$$
\begin{array}{l} d_i(u_p,\dots,u_1)=\left\{
\begin{array}{lcl}
  \hspace{-0.2cm}  (u_p,\dots,u_2) &\text{ if }& i=0 ,  \\[4pt]
  \hspace{-0.2cm}  (u_p,\dots,u_{i+1}\circ  u_{i},\dots,u_1)
  & \text{ if } & 0<i<p , \\[4pt]
  \hspace{-0.2cm}  (u_{p-1},\dots,u_1) & \text{ if }& i=p,
\end{array}\right.\\[20pt]
s_i(u_p,\dots,u_1)=(u_p,\dots,u_{i+1},1,u_{i},\dots, u_0).
\end{array}
$$

 The structure natural transformation
\begin{equation}\label{constraint1}
 \begin{array}{l}
 \xymatrix@C=15pt{
    \ner\B_{p} \ar@/^1.1pc/[rr]^{\ner\B_{b}\ \ner\B_{a}}
             \ar@/_1.1pc/[rr]_{ \ner\B_{ab}}
             \ar@{}[rr]|{\Downarrow\,\widehat{\ner}\B_{a,b}}
    & &\ner\B_{n},}
\end{array}
\end{equation}
for each pair of composable maps $[n]\overset{b}\to
[q]\overset{a}\to [p]$
 in $\Delta$, is
$$
\xymatrix@C=40pt{\ner\B_{b}\ \ner\B_{a}=
  R_n b^*J_qR_q a^* J_p\ar@{=>}[rr]^-{\widehat{\ner}\B_{a,b}=R_nb^*\v_q a^*J_p}
  &&R_n b^*a^*J_p= R_n (ab)^*J_p=\ner\B_{ab}.}$$

Let us stress that, in spite of the natural transformation $\v$ in
$(\ref{counit})$ not being invertible, the natural transformation
$\widehat{\ner}\B_{a,b}$ in $(\ref{constraint1})$ is invertible
since, for any $\x\in\ner\B_{p}$, the lax functor $a^*J_p\x$ is
actually a homomorphism and therefore $\v_qa^*J_p\x$ is an
isomorphism. Consquently, we only need to prove that these
constraints $\widehat{\ner}\B_{a,b}$
  verify the coherence conditions for lax functors:

  If $a=1_{[p]}$,
  then $\widehat{\ner}\B_{1,b}=R_nb^*\v_pJ_p=R_nb^*1_{J_p}=1_{\ner\B_b}$. Similarly,
  $\widehat{\ner}\B_{a,1}= 1_{\ner\B_a}$. Furthermore, for every
  triplet of composable arrows $[m]\overset{c}\to [n]\overset{b}\to
  [q] \overset{a}\to [p]$, the diagram
$$
\begin{array}{l}\xymatrix@C=50pt{\ner\B_c\ \ner\B_b\
    \ner\B_a\ar@{=>}[d]_{ \widehat{\ner}\B_{b,c} \,
      \ner\B_{a}}\ar@{=>}[r]^{
      \ner\B_{c}\,\widehat{\ner}\B_{a,b}}& \ner\B_c \ \ner\B_{ab}
    \ar@{=>}[d]^{ \widehat{\ner}\B_{ab,c}}\\  \ner\B_{bc}
    \  \ner\B_{a}\ar@{=>}[r]^{\widehat{\ner}\B_{a,bc}}
& \ner\B_{abc},}
\end{array}
$$
is commutative since it is obtained by applying the functors $R_m
c^*$ on the left, and $a^*J_p$ on the right, to the diagram
\begin{equation}\label{natvv}
\begin{array}{l}
  \xymatrix@C=45pt@R=18pt{J_nR_n b^*J_qR_q\ar@{=>}[r]^(.55){J_nR_n b^*\v_q}
        \ar@{=>}[d]_{\v_n b^*J_qR_q}
    &J_nR_n b^*\ar@{=>}[d]^{\v_n b^*}
    \\ b^*J_qR_q\ar@{=>}[r]_{b^*\v_q}&b^*,}
\end{array}
\end{equation}
which commutes by the naturality of $\v_n$.

\vspace{0.2cm} $(ii)$: Suppose now that $F:\B\to\B'$ is a lax
functor. Then, at any integer $p\geq 0$, the functor $\ner
F_p:\ner\B_p\to\ner\B'_p$ is the composite
$$
\xymatrix@R=20pt{\ner\B_p\ar@{.>}[r]^{\ner F_p}\ar[d]_{J_p}&\ner\B'_p\\
\brep([p],\B)\ar[r]^{F_*}&\brep([p],\B'),
\ar[u]_{R'_p}}
$$
which is explicitly given both on objects and arrows by the simple
formula $\ner F_p(u_p,\dots,u_1)=(Fu_p,\dots, Fu_1)$. The structure
natural transformation
$$
\begin{array}{l}
  \xymatrix{
    \ner\B_p \ar@{}@<30pt>[d]|(.4){\widehat{\ner}F_a}|(.6){\Rightarrow}
             \ar[r]^{\ner\B_{a}}\ar[d]_{\ner F_{p}}
    &\ner\B_q \ar[d]^{\ner F_{q}}
    \\\ner\B'_p\ar[r]_{\ner\B'_{a}}
    &\ner\B'_q,}
\end{array}
$$
at each map $a:[q]\to [p]$ in $\Delta$, is

\noindent$
\begin{array}{ll}
\xymatrix@C=70pt{\ner\B'_a\,\ner F_p=R'_qa^*J'_pR'_pF_*J_p
\ar@2[r]^-{\widehat{\ner}F_a=R'_qa^*\v'_pF_*J_p}&}&
\hspace{-0.3cm}
R'_qa^*F_*J_p=R'_qF_*a^*J_p
\overset{(\ref{newiqu})}= R'_qF_*J_qR_qa^*J_p
\\ &\hspace{-0.3cm}= \ner F_q\,\ner\B_a.
\end{array}
$

This family of natural transformations $\widehat{\ner}F_a$ verifies
the coherence conditions for lax transformations: If $a=1_{[p]}$,
then $\widehat{\ner}F_1=R'_p\v'_pF_*J_p= 1_{R'_p}F_*J_p=
1_{\widehat{\ner}F_p}$. Suppose  that   $b:[n]\to [q]$ is any other
map of $\Delta$, then the coherence diagram
$$
\xymatrix@C=45pt{
\ner\B'_b\,\ner\B'_a\,\ner F_p
     \ar@2[d]_{\widehat{\ner}\B'_{a,b}\,\ner F_p}
     \ar@2[r]^-{\ner\B'_b\,\widehat{\ner}F_a}
&\ner\B'_b\,\ner F_q\,\ner\B_a
     \ar@2[r]^-{\widehat{\ner}F_b\,\ner\B_a}
&\ner F_n\,\ner\B_b\,\ner\B_a\ar@2[d]^{\ner F_n\, \widehat{\ner}\B_{a,b}}
\\ \ner\B'_{ab}\,\ner F_p\ar@2[rr]^{\widehat{\ner}F_{ab}}
&&\ner F_n\,\ner\B_{ab}
}
$$
commutes, since
\begin{align}\nonumber
(\ner F_n\,\widehat{\ner}\B_{a,b})&\circ (\widehat{\ner}F_b\,\ner\B_a)\circ (\ner\B'_b\, \widehat{\ner}F_a)\\
\nonumber &=(R'_nF_*J_nR_nb^*\v_qa^*J_p)\circ
(R'_nb^*\v'_qF_*J_qR_qa^*J_p)\circ(R'_nb^*J'_qR'_qa^*\v'_pF_*J_p)\\
\nonumber &\overset{(\ref{newiqu})}=(R'_nb^*F_*\v_qa^*J_p)\circ
(R'_nb^*\v'_qF_*J_qR_qa^*J_p)\circ(R'_nb^*J'_qR'_qa^*\v'_pF_*J_p)\\
\nonumber &\overset{(\ref{commutativeDiagram1})}=(R'_nb^*\v'_qa^*F_*J_p)\circ(R'_nb^*J'_qR'_qa^*\v'_pF_*J_p)
\\
&\nonumber \overset{(\ref{natvv})}=(R'_nb^*a^*\v'_pF_*J_p)\circ (R'_nb^*\v'_qa^*J'_pR'_pF_*J_p)
= \widehat{\ner}F_{ab}
\circ (\widehat{\ner}\B'_{a,b}\,\ner F_p).
\end{align}

To finish, let $F:\B\to \B'$ and $F':\B'\to \B''$ be lax functors.
Then, $\ner F'\, \ner F=\ner (F'F)$ and $\ner 1_\B=1_{\ner\B}$
since, at any $[p]$  and $a:[q]\to[p]$ in $\Delta$, we have
\begin{align}\nonumber
\ner F'_p\ \ner F_p&=R''F'_*J'_pR'_pF_*J_p\overset{(\ref{newiqu})}=R''_pF'_*F_*J_p=
R''_p(F'F)_*J_p=\ner(F'F)_p,
\\ \nonumber
\widehat{\ner F'\ner F}_a&=\ner F'_q\,\widehat{\ner}F_a \circ \widehat{\ner}F'_a\,\ner F_p =
(R''_qF'_*J'_qR'_qa^*\v'_pF_*J_p)\circ (R''_qa^*\v''_pF'_*J'_pR'_pF_*J_p)\\
\nonumber  &
\overset{(\ref{newiqu})}= (R''_qa^*F'_*\v'_pF_*J_p)\circ
(R''_qa^*\v''_pF'_*J'_pR'_pF_*J_p)\overset{(\ref{commutativeDiagram1})}= R''_qa^*\v''_pF'_*F_*J_p
= \widehat{\ner}(F'F)_a,
\\ \nonumber
\ner1_p&=R_p1_*J_p\overset{(\ref{twosquares})}=R_pJ_p =1_{\ner\B_p}
,
\\ \nonumber
\widehat{\ner}1_{a}&=R_qa^*\v_p1_*J_p=R_qa^*\v_pJ_p=R_qa^*1_{J_p}=1_{R_qa^*J_p}=1_{\ner\B_a}.
\end{align}

This completes the proof of Theorem \ref{groner} and lets us prepare
to prove the first part of Lemma \ref{fact1}.

\begin{corollary}\label{pfact1}
The assignment $\B\mapsto \BB\B$ is the function on objects of a
functor
$$\BB:\mathbf{Lax}\to\mathbf{Top}.$$
\end{corollary}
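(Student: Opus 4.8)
The plan is to exhibit $\BB$ as a composite of three functors, so that its functoriality on lax functors reduces to Theorem~\ref{groner} together with the functoriality of the Grothendieck construction on lax transformations. Recall from Subsection~\ref{grb} that $\BB\B=|\ner\!\int_\Delta\!\ner\B|$, where $\int_\Delta\!\ner\B$ is the category obtained by the Grothendieck construction on the pseudo-simplicial nerve $\ner\B:\Delta^{\!\mathrm{op}}\to\Cat$, and where the outer $\ner(-)$ and $|-|$ denote the ordinary nerve of a small category and geometric realization, respectively. Since $\ner(-):\Cat\to\mathbf{SimpSet}$ and $|-|:\mathbf{SimpSet}\to\Top$ are genuine functors, it suffices to upgrade the assignment $\B\mapsto\int_\Delta\!\ner\B$ to a functor $\mathbf{Lax}\to\Cat$, and then to set $\BB F:=|\ner\!\int_\Delta\!\ner F|$ for each lax functor $F$.

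By Theorem~\ref{groner}, the assignments $\B\mapsto\ner\B$ and $F\mapsto\ner F$ already constitute a functor from $\mathbf{Lax}$ into the category whose objects are normal pseudo-simplicial categories and whose morphisms are lax (simplicial) transformations, since $\ner F'\,\ner F=\ner(F'F)$ and $\ner 1_\B=1_{\ner\B}$. Thus the remaining step is to make the Grothendieck construction into a functor on that category. On objects it sends $\ner\B$ to $\int_\Delta\!\ner\B$, whose objects are the pairs $([p],x)$ with $x\in\ner\B_p$ and whose morphisms $([p],x)\to([q],y)$ are the pairs $(\alpha,\xi)$ with $\alpha:[q]\to[p]$ in $\Delta$ and $\xi:\ner\B_\alpha(x)\to y$ in $\ner\B_q$, composition being defined by means of the constraints $\widehat{\ner}\B_{a,b}$. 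On a lax transformation $\ner F:\ner\B\Rightarrow\ner\B'$, I would define the induced functor $\int_\Delta\!\ner F:\int_\Delta\!\ner\B\to\int_\Delta\!\ner\B'$ by $([p],x)\mapsto([p],\ner F_p x)$ on objects and, on a morphism $(\alpha,\xi)$ as above, by
$$
(\alpha,\xi)\ \longmapsto\ \big(\alpha,\ \ner F_q(\xi)\cdot(\widehat{\ner}F_\alpha)_x\big),
$$
where $(\widehat{\ner}F_\alpha)_x:\ner\B'_\alpha(\ner F_p x)\to\ner F_q(\ner\B_\alpha x)$ is the component at $x$ of the structure transformation $\widehat{\ner}F_\alpha$, and the composite is taken in $\ner\B'_q$. (One checks that the variances match: $\ner F$ is a lax transformation between the covariant functors $\ner\B,\ner\B':\Delta^{\!\mathrm{op}}\to\Cat$, which is exactly the direction that induces a functor on Grothendieck constructions.)

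The main work, and the place where the lax-transformation axioms of Theorem~\ref{groner} are genuinely used, is to verify that this assignment preserves composition and identities of morphisms in the total categories, so that $\int_\Delta\!\ner F$ is indeed a functor; this is a diagram chase combining the naturality of each $\widehat{\ner}F_\alpha$ with the coherence identity relating $\widehat{\ner}F_{\alpha\beta}$ to $\widehat{\ner}F_\alpha$, $\widehat{\ner}F_\beta$, $\widehat{\ner}\B_{\alpha,\beta}$ and $\widehat{\ner}\B'_{\alpha,\beta}$, i.e.\ precisely the commutative diagram established at the end of the proof of Theorem~\ref{groner}$(ii)$. Granting this, functoriality in $F$ is immediate: the displayed formula shows that $\int_\Delta(-)$ carries the vertical composite of lax transformations to the composite of the induced functors and the identity transformation to the identity functor, so that from $\ner F'\,\ner F=\ner(F'F)$ one gets $\int_\Delta\!\ner(F'F)=\int_\Delta\!\ner F'\circ\int_\Delta\!\ner F$, and likewise $\int_\Delta\!\ner 1_\B=1$. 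Applying the functors $\ner(-)$ and $|-|$ then yields $\BB(F'F)=\BB F'\,\BB F$ and $\BB 1_\B=1_{\BB\B}$, so that $\BB:\mathbf{Lax}\to\Top$ is a functor. The only real obstacle is the routine but somewhat lengthy check that $\int_\Delta\!\ner F$ respects composition; everything else is bookkeeping already prepared by Theorem~\ref{groner}.
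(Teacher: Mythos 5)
Your proposal is correct and follows essentially the same route as the paper: decompose $\BB$ as geometric realization of the nerve of the Grothendieck construction, invoke Theorem~\ref{groner} for the strict functoriality of $\B\mapsto\ner\B$ on $\mathbf{Lax}$, and then use that the Grothendieck construction carries lax simplicial functors to functors (and preserves their composites and identities strictly). The only difference is one of presentation: the paper treats the functoriality of $\int_\Delta$ on lax transformations as known (citing Grothendieck), whereas you write out the induced functor $(\alpha,\xi)\mapsto\big(\alpha,\ \ner F_q(\xi)\cdot(\widehat{\ner}F_\alpha)_x\big)$ explicitly, with the correct variance and the correct coherence conditions to be checked.
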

\begin{proof} By Theorem \ref{groner}, any lax functor $F:\B\to\B'$ gives rise to a lax simplicial functor
$\ner F:\ner \B\to \ner \B'$, hence to a functor $\int_\Delta \ner
F:\int_\Delta \ner \B\to \int_\Delta \ner \B'$ and then to a
cellular map $\BB F:\BB\B\to \BB\B'$. For $F=1_\B$, we have
$\int_\Delta\ner 1_\B= \int_\Delta
1_{\ner\B}=1_{\int_\Delta\ner\B}$, whence $\BB 1_\B=1_{\BB\B}$. For
any other lax functor  $F':\B'\to \B''$, the equality $\ner F'\,
\ner F=\ner (F'F)$  gives that $\xymatrix{\int_\Delta\ner
(F'F)=\int_\Delta\ner F'\ner F=\int_\Delta\ner F'\int_\Delta\ner
F}$, whence $\BB(F'F)=\BB F'\, \BB F$.
\end{proof}

In ~\cite[Definition 5.2]{CCG2010}, Carrasco, Cegarra, and Garz\'on
defined the \emph{categorical geometric nerve} of a bicategory $\B$
as the simplicial category
$$
\underline{\Delta}\B: \Delta^{\!op} \to  \Cat, \hspace{0.4cm}
[p]\mapsto \brep([p],\B),
$$
whose category of $p$-simplices is the category of lax functors
$\x:[p]\to \B$, with relative to objects lax transformations (i.e.,
icons) between them as arrows.
  The proposition below shows how $\underline{\Delta}\B$ relates with
the Grothendieck nerve $\ner\B$.

\begin{proposition} For any bicategory $\B$, there is a lax simplicial functor
\begin{equation}\label{eqr}
R=(R,\widehat{R}):\underline{\Delta}\B \to \ner\B
\end{equation}
inducing a homotopy equivalence
\begin{equation}\label{eqbr} \begin{array}{l}\xymatrix{\BB\!\int_\Delta\! R :\, \BB\!\int_\Delta\!
\underline{\Delta}\B
\ar[r]^-{\sim}& \BB\!\int_\Delta\!
\ner\B=\BB\B,}
\end{array}
\end{equation}
which is natural in $\B$ on lax functors. That is, for any lax
functor $F:\B\to\B'$, the square of spaces below commutes.
\begin{equation}\label{frcomm}\begin{array}{l}
\xymatrix@C=60pt{\BB\!\int_\Delta\!\underline{\Delta}\B \ar[r]^{\BB\!\int_\Delta\! R}
\ar[d]_{\BB\!\int_\Delta\! \underline{\Delta}F}& \BB\B\ar[d]^{\BB F}\\
\BB\!\int_\Delta\!\underline{\Delta}\B' \ar[r]^{\BB\!\int_\Delta \!R'}& \BB\B'}
\end{array}
\end{equation}
\end{proposition}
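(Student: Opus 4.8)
The plan is to assemble the level-wise adjunctions of Lemma~\ref{rufc} into the lax simplicial functor $R$ and then to reduce $(\ref{eqbr})$ to a fibre-wise statement. First I would take $R$ to be the restriction functors $R_p\colon\brep([p],\B)\to\ner\B_p$ at each level, with structure $2$-cells defined, for each map $a\colon[q]\to[p]$ of $\Delta$, by
$$\widehat{R}_a=R_q\,a^{*}\v_p\colon\ \ner\B_a\,R_p=R_q\,a^{*}J_pR_p\Longrightarrow R_q\,a^{*}=R_q\,\underline{\Delta}\B_a,$$
where $\v_p\colon J_pR_p\Rightarrow 1$ is the counit of Lemma~\ref{rufc} and $\ner\B_a=R_qa^{*}J_p$ as in the proof of Theorem~\ref{groner}. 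Since $R_p\v_p=1$ one has $\widehat{R}_1=1$, so $R$ is normal, while $\v_p$ fails to be invertible on a general lax functor $\x\colon[p]\to\B$, so the $\widehat{R}_a$ are genuinely non-invertible and $R$ is only lax. The coherence condition, for composable $[n]\xrightarrow{b}[q]\xrightarrow{a}[p]$, is the equality of the two composites $\ner\B_b\,\ner\B_a\,R_p\Rightarrow R_n\,\underline{\Delta}\B_{ab}$ built from $\widehat{R}_a,\widehat{R}_b$ and from $\widehat{\ner}\B_{a,b}$ together with $\widehat{R}_{ab}$; substituting the formulas for $\widehat{R}_a$ and $\widehat{\ner}\B_{a,b}=R_nb^{*}\v_qa^{*}J_p$, this reduces, exactly as in the proof of Theorem~\ref{groner}, to the naturality square $(\ref{natvv})$ of $\v$ and the identity $R\v=1$.

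For the homotopy equivalence $(\ref{eqbr})$ I would argue fibre-wise. The induced functor $\int_\Delta R$ commutes with the projections of the two Grothendieck constructions onto $\Delta^{\!\mathrm{op}}$ and restricts over each $[p]$ to $R_p$. By Lemma~\ref{rufc} each $R_p$ is right adjoint to $J_p$ with $R_pJ_p=1$, so the counit $\v_p$, being a natural transformation of functors, induces a homotopy $\BB J_p\,\BB R_p\simeq 1$, while $\BB R_p\,\BB J_p=1$; hence every $\BB R_p$ is a homotopy equivalence with homotopy inverse $\BB J_p$. Now, up to natural homotopy equivalence, $\BB\int_\Delta X$ (for $X$ a normal pseudo-simplicial category) is the realization of the diagonal of the bisimplicial set $([p],[k])\mapsto N_k(X_p)$ of level-wise nerves, i.e.\ the homotopy colimit over $\Delta^{\!\mathrm{op}}$ of the spaces $\BB X_p$. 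The functor $\int_\Delta R$ induces on the $p$-th row the map $N_\bullet(R_p)$, which realizes to the homotopy equivalence $\BB R_p$; since a map of bisimplicial sets that is a weak equivalence in each row induces a weak equivalence on diagonals~\cite{GJ1999}, the map $\BB\int_\Delta R$ is a homotopy equivalence.

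Finally, for the naturality square $(\ref{frcomm})$, given a lax functor $F\colon\B\to\B'$ I would verify the level-wise identity $\ner F_p\,R_p=R'_p\,\underline{\Delta}F_p$: the functor $R_p$ sends a lax functor $\x\colon[p]\to\B$ to the tuple of $1$-cells it assigns to the basic edges of $\mathcal{G}_p$, whereas both $\ner F$ and $\underline{\Delta}F$ act by post-composition with $F$, so both composites carry $\x$ to $(F\x_{p-1,p},\dots,F\x_{0,1})$. A parallel comparison of the structure $2$-cells, again using $R\v=1$ and naturality of the counits, gives the equality of lax simplicial functors $\ner F\circ R=R'\circ\underline{\Delta}F$; applying $\BB\int_\Delta$ and recalling $\BB F=\BB\int_\Delta\ner F$ (Corollary~\ref{pfact1}) yields the commutativity of $(\ref{frcomm})$. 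The step I expect to be most delicate is the homotopy equivalence: one must make precise the identification of $\BB\int_\Delta\ner\B$ with the diagonal of its level-wise nerves in the merely \emph{pseudo}-simplicial setting, where the simplicial operators are functorial only up to the coherent isomorphisms $\widehat{\ner}\B_{a,b}$, and check that $\int_\Delta R$ indeed induces the asserted row-wise maps; by contrast, the construction of $R$ and the naturality are routine bookkeeping given Lemma~\ref{rufc} and Theorem~\ref{groner}.
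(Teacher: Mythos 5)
Your construction of $R$, the coherence check, and the naturality verification all coincide with the paper's proof: the paper defines $\widehat{R}_a=R_qa^*\v_p$, gets $\widehat{R}_{1}=1$ from $R_p\v_p=1$, reduces the cocycle condition to the naturality square $(\ref{natvv})$, and proves $\ner F\, R=R'\,\underline{\Delta}F$ levelwise using the identities $(\ref{newiqu})$ and $(\ref{commutativeDiagram1})$ — precisely your ingredients. Likewise, your counit argument that each $\BB R_p$ is a homotopy equivalence is just an unpacking of the paper's appeal to Quillen's Corollary 1 (an adjoint pair induces a homotopy equivalence of classifying spaces). The one real divergence is the passage from the levelwise equivalences to the equivalence $(\ref{eqbr})$. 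The paper gets this in one stroke by citing Thomason ~\cite[Corollary 3.3.1]{Thomason1979}, which applies directly to morphisms of lax/pseudo diagrams of categories and their Grothendieck constructions. Your route — identify $\BB\!\int_\Delta\! X$ with the realization of the diagonal of the levelwise nerves and invoke the row-wise realization lemma for bisimplicial sets — is the idea underlying Thomason's theorem, but as written it is not well-formed on the target side: $\ner\B$ is only a \emph{pseudo}-simplicial category, so $([p],[k])\mapsto \mathrm{N}_k(\ner\B_p)$ is not a bisimplicial set (the horizontal operators compose only up to the coherence isomorphisms $\widehat{\ner}\B_{a,b}$), and the realization lemma cannot be applied to it directly. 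You correctly flag this as the delicate step; closing it requires either citing Thomason's corollary, as the paper does, or first rectifying the pseudo-functor $\ner\B$ to a strict one and comparing Grothendieck constructions, which costs strictly more work than the citation.
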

\begin{proof} At any object  $[p]$ of the simplicial category, $R$ is given by the functor in $(\ref{plrg})$
$$R_p:\underline{\Delta}\B_p= \brep([p],\B)\longrightarrow \brepg(\mathcal{G}_p,\B)= \ner\B_p,$$
and, at any map $a:[q]\to [p]$, the natural transformation
$$
\xymatrix{
\underline{\Delta}\B_p \ar[r]^{a^*} \ar[d]_{R_p}
        \ar@<25pt>@{}[d]|(.37){\widehat{R}_a}|(.55){\Rightarrow}
& \underline{\Delta}\B_q\ar[d]^{R_q}
\\ \ner \B_p\ar[r]_{\ner\B_a}
& \ner\B_q,
}
$$
is defined by $ \xymatrix@C=50pt{\ner\B_a R_p=R_qa^*
J_pR_p\ar@2[r]^-{\widehat{R}_a=R_qa^*\v_p} & R_qa^*}$.  When
$a=1_{[p]}$, clearly $\widehat{R}_{1_{[p]}}=R_p\v_p=1_{R_p}$ and,
for any $b:[n]\to [q]$, the commutativity coherence condition
$$
\xymatrix@C=50pt{
\ner\B_b \,\ner\B_a\, R_p\ar@2[d]_{\widehat{\ner}\B_{a,b}R_p}\ar@2[r]^-{\ner\B_b\widehat{R}_a}&\ner\B_b\,R_q\,a^*\ar@2[d]^{\widehat{R}_ba^*}\\
\ner\B_{ab}\,R_p\ar@2[r]^-{\widehat{R}_{ab}}&R_nb^*a^*=R_n(ab)^*,
}
$$
holds since, by $(\ref{natvv})$, $R_nb^*a^*\v_p\circ
R_nb^*\v_qa^*J_pR_p=R_nb^*\v_qa^* \circ R_nb^*J_qR_qa^*\v_p $.

By ~\cite[Corollary 1]{Quillen1973}, every functor
$R_p:\underline{\Delta}\B_p\to \ner\B_p$ induces a homotopy
equivalence on classifying spaces $\BB
R_p:\BB\underline{\Delta}\B_p\overset{\sim}\to \BB\ner\B_p$ since it
has the functor $J_p$ in $(\ref{plrg})$ as a left adjoint. Then, the
induced map in $(\ref{eqbr})$ is actually a homotopy equivalence by
~\cite[Corollary 3.3.1]{Thomason1979}.

Now let $F:\B\to\B'$ be any lax functor. Then, the square
$$
\xymatrix{\underline{\Delta}\B\ar[r]^{R}\ar[d]_{\underline{\Delta} F}&\ner\B\ar[d]^{\ner F}\\
\underline{\Delta}\B'\ar[r]^{R'}&\ner\B'
}
$$
commutes since,  for any integer $p\geq 0$ and $a:[q]\to [p]$, we
have
\begin{align}\nonumber
\ner F_p\, R_p&= R'_pF_*J_pR_p \overset{(\ref{newiqu})}=R'_pF_*=R'_p\,\underline{\Delta}F_p,
\\ \nonumber
\widehat{\ner F R}_a &= \ner F_q\widehat{R}_a\circ \widehat{\ner}F_aR_p= R'_qF_*J_qR_qa^*\v_p \circ R'_qa^*\v'_pF_*
J_pR_p\\ \nonumber &\overset{(\ref{newiqu})}= R'_qa^*F_*\v_p\circ R'_qa^*\v'_pF_*
J_pR_p
\overset{(\ref{commutativeDiagram1})}=R'_qa^*\v'_pF_* =\widehat{R}_a F_*= \widehat{R'\underline{\Delta}F}_a.
\end{align}

Hence, the commutativity of the square $(\ref{frcomm})$ follows:
\begin{align} \nonumber \xymatrix{
\BB F\,\, \BB\!\int_\Delta\!R= \BB\!\int_\Delta\!\ner F\, \,\BB\!\int_\Delta\!R=
\BB( \int_\Delta\!\ner F\, \int_\Delta\!R) =
\BB\int_\Delta(\ner F\, R)
}\\
\nonumber
\xymatrix{=\BB\int_\Delta(R'\,\underline{\Delta}F)=\BB( \int_\Delta\!R'\int_\Delta\underline{\Delta}F)
=\BB\! \int_\Delta\!R' \,\,\BB\!\int_\Delta\underline{\Delta}F.
}
\end{align}
\end{proof}

We are now ready to complete the proof of Lemmas \ref{fact1} and
\ref{facts2}.

\begin{corollary}\label{pfact2}   For any bicategory $\B$, there is a homotopy equivalence
\begin{equation}\label{kappa}
\xymatrix@C=15pt{\kappa:|\Delta\B|\ar[r]^-{_\sim}& \BB\B,}
\end{equation}
which is  homotopy natural on lax functors. That is, for any lax
functor $F:\B\to \B'$, there is
 a homotopy $\kappa'\, |\Delta F|\Rightarrow \BB F\, \kappa$,
\begin{equation}\label{squkap}
\xymatrix{|\Delta\B|\ar[r]^-{\kappa}\ar[d]_{|\Delta F|}
\ar@{}@<25pt>[d]|{\Rightarrow}
& \BB\B\ar[d]^{\BB F}\\
|\Delta\B'|\ar[r]^-{\kappa'}& \BB\B'.}
\end{equation}
\end{corollary}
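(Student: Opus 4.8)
The plan is to obtain $\kappa$ by factoring it through the categorical geometric nerve, exploiting the natural homotopy equivalence $\BB\!\int_\Delta\!R\colon\BB\!\int_\Delta\!\underline{\Delta}\B\to\BB\B$ already produced in $(\ref{eqbr})$ together with its naturality square $(\ref{frcomm})$. In this way the entire problem reduces to comparing $|\Delta\B|$ with $\BB\!\int_\Delta\!\underline{\Delta}\B$ naturally on lax functors. First I would record that the geometric nerve is itself functorial on lax functors: a lax functor $F\colon\B\to\B'$ induces the simplicial map $\Delta F\colon\Delta\B\to\Delta\B'$ by postcomposition of lax functors $[p]\to\B\to\B'$, and $\Delta\B=\Ob\,\underline{\Delta}\B$ is precisely the simplicial set of objects of the genuine simplicial category $\underline{\Delta}\B$. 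Writing $\iota\colon\Delta\B\hookrightarrow\underline{\Delta}\B$ for the inclusion of this discrete subcategory, a strictly simplicial functor natural in $\B$, and viewing $\Delta\B$ as a discrete simplicial category, the Grothendieck construction $\int_\Delta\Delta\B$ is the category of simplices $\Delta\!\downarrow\!\Delta\B$. By the homotopy colimit decomposition used in the proof of Lemma \ref{simlem} (Quillen's Lemma together with $\mathrm{hocolim}\,\mathrm{pt}=\BB(\Delta\!\downarrow\!\Delta\B)$ and the contractibility of the $|\Delta[n]|$), one obtains a natural homotopy equivalence $r\colon\BB\!\int_\Delta\!\Delta\B\xrightarrow{\ \sim\ }|\Delta\B|$.

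I would then define $\kappa$ as the composite $|\Delta\B|\xleftarrow[\ \sim\ ]{\,r\,}\BB\!\int_\Delta\!\Delta\B\xrightarrow{\BB\int_\Delta\iota}\BB\!\int_\Delta\!\underline{\Delta}\B\xrightarrow{\BB\int_\Delta R}\BB\B$, that is, $\kappa=\BB\!\int_\Delta\!R\,\circ\,\BB\!\int_\Delta\!\iota\,\circ\,r^{-1}$. Since $\BB\!\int_\Delta\!R$ is already known to be a homotopy equivalence, in order to see that $\kappa$ is one it remains only to prove that $\BB\!\int_\Delta\!\iota$ is a homotopy equivalence. This is the heart of the argument and the step I expect to be the main obstacle: it does \emph{not} follow from a levelwise comparison, because the categories $\brep([p],\B)=\underline{\Delta}\B_p$ are far from discrete (their classifying spaces are those of the products of hom-categories $\ner\B_p$), so the inclusion of their objects is not a levelwise equivalence.

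To overcome this I would apply Quillen's Theorem A to the functor $\int_\Delta\iota\colon\int_\Delta\Delta\B\to\int_\Delta\underline{\Delta}\B$, checking that the relevant comma categories are contractible. The decisive computation is that, fibrewise over the simplex category, these comma categories are assembled from comma categories of the form $c\!\downarrow\!\B(x,y)$ inside a single hom-category, each of which is contractible because it has the initial object $1_c\colon c\to c$; the identity icons of $\int_\Delta\Delta\B$ supply the required coherent choices. Equivalently, one may transpose the bisimplicial set $W_{p,q}=\ner(\brep([p],\B))_q$, whose diagonal realizes $\BB\!\int_\Delta\!\underline{\Delta}\B$, realize the $p$-direction first, and show by the same contractibility that the resulting simplicial space $[q]\mapsto|W_{\bullet,q}|$ is homotopy constant, so that its realization recovers $|W_{\bullet,0}|=|\Delta\B|$. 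As a fallback, this single equivalence can also be read off from the proof of the Homotopy Invariance Theorem ~\cite[Theorem 6.1]{CCG2010}, after which everything else below is new.

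Finally, for naturality on a lax functor $F\colon\B\to\B'$, all three forward maps are natural: $\BB\!\int_\Delta\!\iota$ and $r$ because $\iota$ and the category-of-simplices construction are strictly natural, and $\BB\!\int_\Delta\!R$ by the commutative square $(\ref{frcomm})$. Hence the only failure of strict commutativity in $(\ref{squkap})$ stems from inverting the natural equivalence $r$; choosing a homotopy inverse of $r$ and invoking the strictly commuting naturality square of $r$ yields the desired homotopy $\kappa'\,|\Delta F|\Rightarrow\BB F\,\kappa$, which is exactly the asserted homotopy naturality.
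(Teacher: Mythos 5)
Your construction of $\kappa$ has the same overall shape as the paper's: both factor through the natural homotopy equivalence $\BB\!\int_\Delta\!R:\BB\!\int_\Delta\!\underline{\Delta}\B\to\BB\B$ of $(\ref{eqbr})$ and $(\ref{frcomm})$, and both settle for homotopy (rather than strict) naturality in $(\ref{squkap})$ by inverting one natural weak equivalence. Where you differ is the bridge between $|\Delta\B|$ and $\BB\!\int_\Delta\!\underline{\Delta}\B$: the paper passes through the diagonal $\mathrm{diag}\,\mathrm{N}\underline{\Delta}\B$, invoking \cite[Theorem 6.2]{CCG2010} for the map $i:\Delta\B\to \mathrm{diag}\,\mathrm{N}\underline{\Delta}\B$, Bousfield--Kan \cite{BK1972} for $\mu$, and Thomason's theorem \cite{Thomason1979} for $\eta$, whereas you pass through the category of simplices $\int_\Delta\Delta\B$ and reduce everything to the single claim that $\BB\!\int_\Delta\!\iota$ is a homotopy equivalence. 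That reduction, your construction of $r$, and your naturality argument are all fine; the claim itself is even true, since it follows from the paper's chain of natural equivalences applied along $\iota$.

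The genuine gap is your proof of that claim. Quillen's Theorem A requires the comma categories of $\int_\Delta\iota$ to be contractible, and your sketch does not establish this. Fix an object $([p],\x)$ of $\int_\Delta\underline{\Delta}\B$. Because the fibre categories of $\int_\Delta\Delta\B$ are discrete, the only morphisms in the comma category $([p],\x)\!\downarrow\!\int_\Delta\iota$ are simplicial operators; concretely, it is isomorphic to the category of simplices of the simplicial set $E$ whose $q$-simplices are triples $(a,\mathbf{y},\Phi)$ with $a:[q]\to[p]$ in $\Delta$, $\mathbf{y}\in\Delta\B_q$, and $\Phi:\x a\Rightarrow\mathbf{y}$ an icon. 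The icons occur only as part of the \emph{objects} of this category, never as morphisms, so there is no initial object of the kind present in an under-category $c\!\downarrow\!\B(x,y)$, and the evident cone (extra-degeneracy) contraction of $E$ breaks down precisely because icons and the structure $2$-cells of lax functors need not be invertible: to cone a simplex off to the vertex $\x$ one needs $2$-cells going from composites involving the $\mathbf{y}_{jk}$ back into the $1$-cells of $\x a$, i.e.\ one needs to reverse $\Phi$. Hence contractibility of these comma categories is not ``assembled'' from the contractibility of the categories $c\!\downarrow\!\B(x,y)$; establishing it (if it holds at all --- Theorem A is only a sufficient criterion) is essentially of the same depth as \cite[Theorem 6.2]{CCG2010}, which is exactly the external input the paper relies on. Your bisimplicial transposition hinges on the same unproved point (that each degeneracy $\Delta\B\to\mathrm{N}_q\underline{\Delta}\B$ induces a weak equivalence on realizations), and your fallback of reading the claim off from the proof of \cite[Theorem 6.1]{CCG2010} is not an independent argument: it amounts to re-assembling the paper's own chain $|i|$, $|\mu|$, $|\eta|$ naturally along $\iota$, and it still requires the (unperformed) diagram chase showing that $\BB\!\int_\Delta\!\iota$ is compatible with that chain.
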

\begin{proof} Let $\mathrm{N}\underline{\Delta}\B:\Delta^{\!op}\to \mathbf{Simpl}\Set$ be
the bisimplicial set obtained from the simplicial category
$\underline{\Delta}\B:\Delta^{\!op}\to \Cat$ with the nerve of
categories functor $\mathrm{N}:\Cat \to \mathbf{Simpl}\Set$.

As $\Delta\B$ is the simplicial set of objects of the simplicial
category $\underline{\Delta}\B$, if we regard $\Delta\B$ as a
discrete simplicial category (i.e., with only identities as arrows),
we have a simplicial category inclusion map $\Delta\B\hookrightarrow
\underline{\Delta}\B$, whence a bisimplicial inclusion map
$\mathrm{N}\Delta\B\hookrightarrow \mathrm{N}\underline{\Delta}\B$,
where $\mathrm{N}\Delta\B$ is the bisimplicial set that is constant
the simplicial set $\Delta\B$ in the vertical direction.
 Then, we have an induced simplicial set map on diagonals
$i:\Delta\B\to \mathrm{diag}\,\mathrm{N}\underline{\Delta}\B$. This
map is clearly natural in $\B$ on lax functors and, by
~\cite[Theorem 6.2]{CCG2010}, it induces a homotopy equivalence on
geometric realizations. Furthermore, a result by Bousfield and Kan
~\cite[Chap. XII, 4.3]{BK1972} and  Thomason's Homotopy Colimit
Theorem ~\cite{Thomason1979} give us the existence  of simplicial
maps $\mu:\mathrm{hocolim}\,\mathrm{N}\underline{\Delta}\B\to
\mathrm{diag}\,\mathrm{N}\underline{\Delta}\B$ and
$\eta:\mathrm{hocolim}\,\mathrm{N}\underline{\Delta}\B \to
\mathrm{N}\!\int_\Delta\!\underline{\Delta}\B$, which are  natural
on lax functors and both induce homotopy equivalences on geometric
realizations.

We then have a chain of homotopy equivalences between spaces
$$
\xymatrix@C=35pt{|\Delta\B|\ar[r]^-{|i|}&|\mathrm{diag}\,\mathrm{N}\underline{\Delta}\B|&
|\mathrm{hocolim}\,\mathrm{N}\underline{\Delta}\B| \ar[l]_-{|\mu|} \ar[r]^-{|\eta|}&
\BB \int_\Delta\!\underline{\Delta}\B\ar[r]^-{\BB\! \int_\Delta\! R}& \BB\B,
}
$$
where the last one on the right is  the homotopy equivalence
$(\ref{eqr})$, all of them natural on lax functors $F:\B\to \B'$.
Therefore, taking $|\mu|^\bullet:
|\mathrm{diag}\,\mathrm{N}\underline{\Delta}\B| \to |
\mathrm{hocolim}\,\mathrm{N}\underline{\Delta}\B|$ to be any
homotopy inverse map of $|\mu|$, we have a homotopy equivalence
$$ \xymatrix{\kappa= \BB\! \int_\Delta\! R \cdot |\eta| \cdot |\mu|^\bullet\cdot |i|:|\Delta\B|
\ar[r]^-{_\sim}& \BB\B,}$$
which is homotopy natural on lax functors, as required.
\end{proof}

\begin{corollary}\label{proffi} If $F,F':\B\to\B'$ are two lax functors
between bicategories, then any lax or oplax transformation between
them $\alpha:F\Rightarrow F'$ determines a homotopy, $\BB\alpha:\BB
F\Rightarrow \BB F':\BB\B\to \BB\B'$, between the induced maps on
classifying spaces.
\end{corollary}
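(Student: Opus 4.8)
The plan is to reduce the desired homotopy on classifying spaces to a homotopy on geometric nerves, exploiting the homotopy-natural equivalence $\kappa$ of Corollary \ref{pfact2}. Concretely, I will first produce from $\alpha$ an explicit homotopy $|\Delta F|\simeq|\Delta F'|:|\Delta\B|\to|\Delta\B'|$, and then transport it across $\kappa$. For the transport, write $\kappa^{\bullet}$ for a homotopy inverse of $\kappa$. The homotopy-commutative squares (\ref{squkap}), applied to $F$ and to $F'$, supply homotopies relating $\BB F\,\kappa$ with $\kappa'\,|\Delta F|$ and $\BB F'\,\kappa$ with $\kappa'\,|\Delta F'|$. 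Composing these with $\kappa'$ applied to a homotopy $|\Delta F|\simeq|\Delta F'|$ yields $\BB F\,\kappa\simeq\BB F'\,\kappa$, and precomposing with $\kappa^{\bullet}$ (using $\kappa\,\kappa^{\bullet}\simeq 1$) produces the sought homotopy $\BB\alpha:\BB F\Rightarrow\BB F'$. Thus everything reduces to the simplicial statement that $\alpha$ induces a simplicial homotopy between $\Delta F$ and $\Delta F'$.

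Next I would build that simplicial homotopy between the simplicial maps $\Delta F,\Delta F':\Delta\B\to\Delta\B'$; realizing it and using $|\Delta\B\times\Delta[1]|\cong|\Delta\B|\times[0,1]$ gives the needed $|\Delta F|\simeq|\Delta F'|$. A $p$-simplex of $\Delta\B$ is a lax functor $\x:[p]\to\B$, and $\Delta F(\x)=F\x$, $\Delta F'(\x)=F'\x$ are obtained by composition of lax functors. Whiskering $\alpha$ with $\x$ gives a lax transformation $\alpha\x:F\x\Rightarrow F'\x$ between lax functors out of the ordinary category $[p]$. Because the source is a plain category, this whiskered transformation can be encoded as a single lax functor $H_{\x}:[p]\times[1]\to\B'$ with $H_{\x}(j,0)=F\x j$ and $H_{\x}(j,1)=F'\x j$, acting as $F\x$ and $F'\x$ on the two horizontal copies of $[p]$, as the component $\alpha(\x j)$ on each vertical edge $(j,0)\to(j,1)$, and with value $\alpha(\x j')\circ F\x(j,j')$ on each diagonal $(j,0)\to(j',1)$; the structure constraints on the mixed composites are then assembled from the naturality $2$-cells $\widehat{\alpha}_{\x(j,j')}$ together with the constraints of $F\x$ and $F'\x$. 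Precomposing $H_{\x}$ with the $p+1$ staircase (shuffle) maps $\sigma_i:[p+1]\to[p]\times[1]$, $i=0,\dots,p$, produces lax functors $h_i(\x)=H_{\x}\,\sigma_i:[p+1]\to\B'$, i.e. $(p+1)$-simplices of $\Delta\B'$, and the family $\{h_i\}$ is precisely the data of a simplicial homotopy $\Delta\B\times\Delta[1]\to\Delta\B'$ from $\Delta F$ to $\Delta F'$.

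The hard part will be the coherence and bookkeeping. One must verify that $H_{\x}$ really is a lax functor, i.e. that the constraints built from $\widehat{\alpha}$ satisfy the pentagon and unit axioms — this is exactly where the coherence axioms of the lax transformation $\alpha$ are consumed — and then that the family $\{h_i(\x)\}$ obeys the prism (simplicial homotopy) identities and is natural in $\x$, so that it assembles into an honest simplicial map on $\Delta\B\times\Delta[1]$. These are lengthy diagram chases, which is why they belong in the appendix. Finally, the oplax case is handled in the same way: for an oplax $\alpha$ one inserts the naturality $2$-cells $\widehat{\alpha}$ with the opposite orientation (taking the other staircase as the source and the value $F'\x(j,j')\circ\alpha(\x j)$ on the diagonals), whereupon $H_{\x}$ is again a genuine lax functor $[p]\times[1]\to\B'$, so the identical transport argument through $\kappa$ delivers the homotopy $\BB F\Rightarrow\BB F'$.
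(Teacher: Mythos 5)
Your proposal is correct and follows essentially the same route as the paper: your transport argument is exactly the paper's displayed composite of homotopies $\BB F\Rightarrow \BB F\,\kappa\,\kappa^{\bullet}\Rightarrow \kappa'\,|\Delta F|\,\kappa^{\bullet}\Rightarrow \kappa'\,|\Delta F'|\,\kappa^{\bullet}\Rightarrow \BB F'\,\kappa\,\kappa^{\bullet}\Rightarrow \BB F'$, using the homotopy naturality square $(\ref{squkap})$ and a homotopy inverse $\kappa^{\bullet}$ of $\kappa$. The only difference is that for the nerve-level homotopy $|\Delta F|\Rightarrow|\Delta F'|$ the paper simply cites the proof of ~\cite[Proposition 7.1 (ii)]{CCG2010}, whereas you reconstruct it explicitly via whiskering, the cylinder $[p]\times[1]$ and the staircase decomposition (correctly, including the switch of the diagonal $1$-cells between the lax and oplax cases).
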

\begin{proof} In the proof of ~\cite[Proposition 7.1 (ii)]{CCG2010} it is proven that
any $\alpha:F\Rightarrow G$ gives rise to a homotopy
$H(\alpha):|\Delta F|\Rightarrow |\Delta F'|:|\Delta \B|\to |\Delta
\B'|$. Then, a homotopy $\BB\alpha:\BB F\Rightarrow \BB F'$ is
obtained as the composite of the homotopies
$$
\BB F\Longrightarrow \BB F \kappa  \kappa^\bullet\overset{(\ref{squkap})}\Longrightarrow
\kappa' |\Delta F| \kappa^\bullet \overset{\kappa' H(\alpha)\kappa^\bullet}
\Longrightarrow \kappa' |\Delta F'| \kappa^\bullet\overset{(\ref{squkap})}\Longrightarrow \BB F'\kappa\kappa^\bullet\Longrightarrow \BB F',
$$
where $\kappa^\bullet$ is a homotopy inverse of the homotopy
equivalence $\kappa:|\Delta\B|\to \BB\B$ in $(\ref{kappa})$.
\end{proof}

\bibliography{library}{}
\bibliographystyle{amsplain}

\end{document}